\newcounter{quotecount}
\theoremstyle{plain}
\newtheorem{theorem}{Theorem}[section]
\newtheorem{lemma}[theorem]{Lemma}
\newtheorem{corollary}[theorem]{Corollary}
\newtheorem{proposition}[theorem]{Proposition}
\newtheorem{remark}[theorem]{Remark}
\newtheorem{definition}[theorem]{Definition}
\newtheorem{example}[theorem]{Example}
\theoremstyle{definition}
\newcommand{\MC}{{\mathcal M}}
\newcommand{\RC}{{\mathcal R}}
\def\BB{\mathcal{B}}
\newcommand{\sg}{\mathfrak{g}}
\newcommand{\Z}{\mathbb{Z}}
\newcommand{\R}{\mathbb{R}}
\newcommand{\C}{\mathbb{C}}
\newcommand{\bu}{\mathbf{u}}
\renewcommand{\Re}{\operatorname{Re}}
\renewcommand{\Im}{\operatorname{Im}}
\def\a{\alpha}
\def\G{\Gamma}
\def\ng{{\mathcal{N}(\G)}}
\def\cp{\mathbb{CP}}
\def\g{\gamma}
\def\ga{\Gamma}
\def\part{\partial}
\def\Hom{\mbox{Hom}}
\def\im{\mbox{Image}}
\def\rk{\mbox{rank}}
\def\ker{\mbox{Ker}}
\def\tbigpm{[t_1^{\pm 1},\dots,t_r^{\pm 1}]}
\def\wti{\widetilde}
\def\lra{\longrightarrow}
\def\bd{\begin{definition}}
\def\ed{\end{definition}}
\def\bt{\begin{theorem}}
\def\et{\end{theorem}}
\def\br{\begin{remark}}
\def\er{\end{remark}}
\def\bc{\begin{corollary}}
\def\ec{\end{corollary}}
\def\bp{\begin{proposition}}
\def\ep{\end{proposition}}
\def\be{\begin{equation}}
\def\ee{\end{equation}}
\def\bn{\begin{enumerate}}
\def\en{\end{enumerate}}
\def\ba{\begin{array}}
\def\ea{\end{array}}
\def\bex{\begin{example}}
\def\eex{\end{example}}
\begin{document}

\title[]{Topology of 
subvarieties of complex semi-abelian varieties
}

\author{Yongqiang Liu}
\address{Department of Mathematics, KU Leuven, 
Celestijnenlaan 200B, B-3001 Leuven, Belgium} 
\email{liuyq1117@gmail.com}
\author{Laurentiu Maxim}
\address{Department of Mathematics,
          University of Wisconsin-Madison,
          480 Lincoln Drive, Madison WI 53706-1388, USA.}
\email {maxim@math.wisc.edu}
\author{Botong Wang}
\address{Department of Mathematics,
          University of Wisconsin-Madison,
          480 Lincoln Drive, Madison WI 53706-1388, USA.}
\email {bwang274@wisc.edu}
\thanks{}

\date{\today}

\subjclass[2000]{Primary 14K12, 20G20; Secondary 57D70.}

\keywords{semi-abelian variety, very affine manifold, non-proper Morse theory, generic vanishing, Alexander module, Novikov homology, $L^2$-Betti number}

\begin{abstract}
We use the non-proper Morse theory of Palais-Smale to investigate the topology of smooth closed subvarieties of complex semi-abelian varieties, and that of their infinite cyclic covers. As main applications, we obtain the finite generation (except in the middle degree) of the corresponding integral Alexander modules, as well as the signed Euler characteristic property and generic vanishing for rank-one local systems on such subvarieties. Furthermore, we give a more conceptual (topological) interpretation of the signed Euler characteristic property in terms of vanishing of Novikov homology.  As a byproduct, we prove a generic vanishing result for the $L^2$-Betti numbers of very affine manifolds.  Our methods also recast June Huh's extension of Varchenko's conjecture to very affine manifolds, and provide a generalization of this result in the context of smooth closed sub-varieties of semi-abelian varieties.
\end{abstract}

\maketitle

\tableofcontents

\section{Introduction}

A complex {\it semi-abelian variety} is a complex algebraic group $G$ which is an extension
$$1 \to T \to G \to A \to 1,$$
where $A$ is an abelian variety and $T\cong(\C^*)^m$ is an algebraic torus.
In this paper we study the topology of smooth closed subvarieties of semi-abelian varieties.

It was shown in \cite{FK00} that for a smooth closed  $n$-dimensional subvariety $X$ of a semi-abelian variety, the topological Euler characteristic of $X$ is signed, i.e.,
\be\label{e1} (-1)^n \cdot \chi(X) \geq 0.\ee
On the other hand, (\ref{e1}) fails to be true if $X$ is singular, e.g., see \cite{BWb}. The proof of (\ref{e1}) in loc.cit. makes use of characteristic cycles and the language of perverse sheaves. In fact, (\ref{e1}) is a direct consequence of a more general fact  that the Euler characteristic $\chi(G;\mathcal{P})$ of a perverse sheaf on a semi-abelian variety is non-negative.
In the case of abelian varieties, the latter also follows from the generic vanishing theorems for perverse sheaves, see e.g., \cite{BSS,Sc15}, while the corresponding statement for the Euler characteristic of perverse sheaves on a complex algebraic torus was proved in \cite{LS} (see also \cite{GL}).

One of the goals of this paper is to give a completely topological proof and interpretation of the signed Euler characteristic property  (\ref{e1}). Our arguments are based on the non-proper Morse theory developed by Palais-Smale in \cite{PS64}, and should serve as an access point for geometers and topologists who are not necessarily accustomed with the language of perverse sheaves.

\medskip

One of the main results of this paper is the following (see Corollary \ref{topologyone}):
\bt\label{ti1}
Let $X$ be an $n$-dimensional closed connected smooth subvariety of a complex semi-abelian variety. For a generic group homomorphism $\xi: \pi_1(X)\to \Z$, the corresponding infinite cyclic cover $X^\xi$ is homotopy equivalent to a finite CW-complex with possibly infinitely many $n$-cells attached.  Moreover, if $\xi$ is surjective, then $X^\xi$ is connected.
\et

An irreducible closed subvariety of a complex algebraic torus is called a {\it very affine variety}. These are heavily studied in tropical geometry and algebraic statistics, e.g., see \cite{HS}. Examples of very affine varieties include complements in the complex affine space of essential hyperplane arrangements, as well as complements of complex toric hyperplane arrangements. The signed Euler characteristic property for smooth very affine varieties was proved in \cite{Hu} as a consequence of the generalized Varchenko conjecture; see also \cite{BWb} for a different approach.

In the case of smooth very affine varieties, Theorem \ref{ti1} can be refined as follows (see Corollary \ref{fibrb} and Corollary \ref{fibrbb}):
\bt\label{ti2} 
Let $X$ be an $n$-dimensional smooth connected very affine variety in the complex affine torus $G$.  Then for a generic homomorphism $\xi: \pi_1(X)\to \Z$, which  factors through $\pi_1(X) \to \pi_1(G)$, there exists a continuous map $F_\xi: X_0\to S^1$ defined on a subset $X_0$ of $X$, such that the following properties hold: 
\begin{enumerate}
\item $F_\xi$ is a fibration whose fiber is of finite homotopy type,
\item $X$ is homotopy equivalent to $X_0$ with $(-1)^n \chi(X)$ $n$-cells attached. 
\end{enumerate}
Moreover, if the mixed Hodge structure of $H^1(X; \Z)$ is pure of type $(1,1)$, or equivalently, there exists a smooth compactification $\bar{X}$ of $X$ such that $b_1(\bar{X})=0$, the above statement holds for  any generic homomorphism $\xi: \pi_1(X)\to \Z$. 
\et
The purity assumption from the statement of Theorem \ref{ti2} is satisfied for the main examples of very affine varieties mentioned above: complements of essential hyperplane arrangements and resp. complements of toric hyperplane arrangements.

\medskip

As one of the main topological applications of Theorem \ref{ti1} we mention the following:
\bt\label{ti3}  (finite generation of integral Alexander modules)\\
Let $X$ be an $n$-dimensional closed connected smooth subvariety of a complex semi-abelian variety. For a generic group homomorphism $\xi: \pi_1(X)\to \Z$, the corresponding integral Alexander modules $H_i(X^\xi;\Z)$ are finitely generated abelian groups for any $i \neq n$.
\et

Theorem \ref{ti3} implies immediately the following:
\bc (torsion property) \\
Let $X$ be an $n$-dimensional closed connected smooth subvariety of a complex semi-abelian variety. For a generic group homomorphism $\xi: \pi_1(X)\to \Z$, the corresponding complex Alexander modules $H_i(X^{\xi};\C)$ are torsion $\C[t,t^{-1}]$-modules, for all $i \neq n$.\ec

Under the assumptions of the previous corollary, let $\Delta^{\xi}_i(t)$ denote the  Alexander polynomial associated to $H_i(X^{\xi};\C)$. We also let $\Delta^{\xi}_n(t)$ be the Alexander polynomial associated to the torsion part of the finitely generated $\C[t,t^{-1}]$-module $H_n(X^{\xi};\C)$.  Then by using \cite[Proposition 1.4]{BLW} we have the following:

\bc (roots of Alexander polynomials)\\
For any $i \geq 0$, the zeros of the Alexander polynomials $\Delta^{\xi}_i(t)$ are roots of unity.
\ec

Furthermore, Theorem \ref{ti3} also has the following important consequences:
\bc \label{ci1} (signed Euler characteristic)\\
If $X$ is an $n$-dimensional closed connected smooth subvariety of a complex semi-abelian variety, then $ (-1)^n \cdot \chi(X) \geq 0$.
\ec

\bc\label{ci2} (generic vanishing)\\
Let $X$ be an $n$-dimensional closed connected smooth subvariety of a complex semi-abelian variety, and fix a generic epimorphism $\xi: \pi_1(X)\to \Z$. Given any $s\in \C^*$, denote by $L^{\xi}_s$ the rank-one $\C$-local system on $X$ whose monodromy representation is given by 
$$\pi_1(X)\xrightarrow{\xi} \Z\to \C^*,$$
where the homomorphism $ \Z\to \C^*$ is defined by $1\mapsto s$. Then for all but finitely many $s\in \C^*$, we have:
$$H_i(X, L^{\xi}_s)=0, \text{ for all } i\neq n.$$
\ec

A more conceptual (and purely topological) interpretation of the signed Euler characteristic property (\ref{e1}) can be given in terms of Novikov homology. 
More precisely, to any pair $(X,\xi)$ as above, one can associate Novikov-Betti numbers $b_i(X,\xi)$ and resp. Novikov-torsion numbers $q_i(X,\xi)$. Then the following result holds:

\bc\label{ci3} (vanishing of Novikov homology)\\
Let $X$ be an $n$-dimensional closed connected smooth subvariety of a complex semi-abelian variety, and fix a generic epimorphism $\xi: \pi_1(X)\to \Z$ as above. Then
$b_i(X,\xi)=q_i(X,\xi)=0$ for any $i \neq n$, and $b_n(X,\xi)=(-1)^n\chi(X)$.
\ec


As an application of Theorem \ref{ti2}, we also obtain the following generic vanishing for the $L^2$-Betti numbers of very affine manifolds. To the authors' knowledge, this result seems to be new, and it provides a more unifying treatment of the corresponding assertions for toric arrangement complements, see \cite{DS} (which seems to contain a  gap in the proof, cf. \cite{D15}) and, respectively, essential hyperplane arrangement complements, see \cite{DJL}.

If $\G$ is a countable group, and the pair $(X,\xi)$ is as above, an epimorphism $\a:\pi_1(X) \to \G$ is called $\xi$-admissible if $\xi$ factors through $\a$. Then we have:

\bc \label{L2} (Vanishing of $L^2$-Betti numbers of very affine manifolds) \\ 
Let $X$ be an $n$-dimensional smooth connected very affine variety in the complex affine torus $G$. Fix a generic epimorphism $\xi: \pi_1(X)\to \Z$, which factors through $\pi_1(X) \to \pi_1(G)$.  Then, for any $\xi$-admissible epimorphism $\a:\pi_1(X) \to \G$, we have that  $b_i^{(2)}(X,\a) =0$ for all $i \neq n$, and $b_n^{(2)}(X,\a) =(-1)^n\chi(X)$.
\ec

Our methods also recast June Huh's extension of Varchenko's conjecture to very affine manifolds, see \cite[Theorem 1.1]{Hu}, and provide a generalization of this result in the context of smooth closed sub-varieties of semi-abelian varieties.

\bt\label{Hu}\cite{Hu}
Let $X$ be  an $n$-dimensional closed connected smooth subvariety of  $(\C^*)^m$, and let $(z_1, \ldots, z_m)$ be the coordinates of $(\C^*)^m$. Given $\mathbf{u}=(u_1, \ldots, u_m)\in \Z^m$, let $\psi_{\bu}$ be the restriction of the master function $z_1^{u_1}\cdots z_m^{u_m}$ to $X$. If $\bu$ is sufficiently general, then the following properties hold: 
\begin{enumerate}
\item The function $\psi_\bu$ has only finitely many critical points on $X$.
\item All critical points of $\psi_\bu$ are regular. 
\item The number of critical points is equal to the signed Euler characteristic 
$$(-1)^n\chi(X).$$
\end{enumerate}
\et

Notice that the function $\psi_\bu$ has a critical point at $x\in X$ if and only if the logarithmic 1-form 
$$\frac{d\psi_\bu}{\psi_\bu}=u_1\frac{dz_1}{z_1}+\cdots+u_m\frac{dz_m}{z_m}$$
degenerates at $x$. Moreover, $\psi_\bu$ has a regular critical point at $x$ if and only if $\frac{d\psi_\bu}{\psi_\bu}$ has a regular singularity at $x$. 

The logarithmic forms $\frac{d\psi_\bu}{\psi_\bu}$ are well-defined for $\bu\in \C^m$, and they are left invariant holomorphic $1$-forms on $(\C^*)^m$. Conversely, every left invariant holomorphic 1-form on $(\C^*)^m$ is equal to $\frac{d\psi_\bu}{\psi_\bu}$ for some $\bu\in \C^m$. Thus, Theorem \ref{Hu} can be formulated by using left invariant holomorphic $1$-forms on the affine torus. In this setting, we obtain the following generalization of Huh's theorem to smooth subvarieties of semi-abelian varieties. 
\bt\label{huhgen}
Let $G$ be a complex semi-abelian variety, and let $X$ be an $n$-dimensional closed connected smooth subvariety of $G$. Let $\Gamma$ be the space of left invariant holomorphic $1$-forms on $G$. Given $\eta\in \Gamma$, we denote the restriction of $\eta$ to $X$ by $\eta_X$. If $\eta\in \Gamma$ is general, then the following statements hold: 
\begin{enumerate}
\item The holomorphic 1-form $\eta_X$ degenerates at finitely many points in $X$.
\item At all degeneration points, $\eta_X$ has regular singularity. 
\item The number of degeneration points of $\eta_X$ is equal to the signed Euler characteristic 
$$(-1)^n \chi(X).$$
\end{enumerate}
\et

Note that the signed Euler characteristic $(-1)^n \chi(X)$ of $X$ as in the above theorem can also be identified with the Gaussian degree of $X$, see \cite{FK00}[Corollary 1.5].

\medskip

Let us briefly describe the structure of the paper. In Section \ref{pre} we develop the topological tools needed for proving Theorem \ref{ti3} and its consequences (Corollaries \ref{ci1}-\ref{L2}). In Section \ref{np} we recall the main aspects of the non-proper Morse theory of Palais-Smale, and provide a circle-valued version of this theory. In Section \ref{mr}, we introduce the main constructions of the paper, and prove Theorem \ref{ti1} (see Corollary \ref{topologyone}), Theorem \ref{ti2} (see Corollary \ref{fibrb} and Corollary \ref{fibrbb}) and Theorem \ref{huhgen}. Sections \ref{mp} and \ref{PS} are of technical nature, and supply the proofs of the Morse-theoretic statements behind Theorems \ref{ti1},  \ref{ti2} and \ref{huhgen}. 


\section{Topological preliminaries}\label{pre}
In this section we collect the topological facts needed to prove Theorem \ref{ti3} and its consequences. The results presented here are often formulated in greater generality than needed in the subsequent sections, as they may be of independent interest.

Throughout this paper, all topological spaces under consideration are homotopy equivalent to CW complexes.
\subsection{Finite homotopy/homological type} 
\bd Let $X$ be a connected topological space. We say that:
\begin{enumerate}
\item  $X$ has {\it finite $k$-homotopy type} if $X$ is homotopy equivalent to a CW complex with finite $k$-skeleton. 
\item $X$ has {\it finite homotopy type} if $X$ is homotopy equivalent to a finite CW complex.
\item  $X$ has {\it finite $k$-homological type} if $H_i(X;\Z)$ is a finitely generated abelian group for any $i \leq k$.
\item $X$ has {\it finite homological type} if $H_i(X;\Z)$ is a finitely generated abelian group for any $i \geq 0$.
\end{enumerate}
\ed

\br\rm Note that if $X$ has finite $k$-homotopy type, then $X$ has finite $k$-homological type. However, the converse fails to be true.\er



\subsection{Integral Alexander modules}\label{AM}
Let $X$ be a connected topological space of finite homotopy type, and $\xi:\pi_1(X) \to \Z$ a non-zero homomorphism. Denote by $X^{\xi}$ the infinite cyclic cover of $X$ defined by $\ker(\xi)$. 
Then $X^{\xi}$ is the disjoint union of $[\Z: \im(\xi)]$ homeomorphic connected components.
The group of covering transformations of the cover $X^{\xi}$ is isomorphic to $\Z$, so the singular chain (or cellular) groups $C_i(X^{\xi};\Z)$ and homology groups $H_i(X^{\xi};\Z)$ become modules over the group ring $\Z[\Z]\cong \Z[t^{\pm 1}]$.
\bd The $\Z[t^{\pm 1}]$-module $H_i(X^{\xi};\Z)$ is called the {\it $i$-th integral Alexander module} of the pair $(X,\xi)$.
\ed

For future reference, we note that $\xi$ can be regarded as an element in $H^1(X;\Z)$ via the canonical identification:
\be\label{canid}  \Hom(\pi_1(X),\Z) \cong  \Hom(H_1(X),\Z) \cong H^1(X;\Z) .\ee
Moreover, by the representability theorem for cohomology, any such class $\xi\in H^1(X;\Z)$ is represented by a homotopy class of maps $X\to S^1$. Let $f_{\xi}:X\to S^1$ be a fixed homotopy representative for $\xi$.

For any subspace $Z$ of $X$ with inclusion map $j:Z \hookrightarrow X$, we denote by $Z^{\xi}$ the infinite cyclic cover of $Z$ defined by the composite homomorphism 
$$\xi \circ j_*:\pi_1(Z) \overset{j_*}{\to} \pi_1(X) \overset{\xi}{\to} \Z.$$
Note that this homomorphism is induced by $f_{\xi}|_Z:Z\to S^1$.

\medskip

With the above notations, we have the following preliminary result:
\bp\label{pp1} Let $X$ be a connected topological space of finite homotopy type, and fix a non-zero homomorphism $\xi:\pi_1(X)\to \Z$. 
\begin{enumerate}
\item[(a)] Assume that $X$ is obtained up to homotopy from a subspace $Z$, by attaching cells of dimension $\geq n$ (with $n \geq 2$). Moreover, assume that the infinite cyclic cover $Z^{\xi}$ has finite $(n-1)$-homological type. Then the infinite cyclic cover $X^{\xi}$ of $X$ has finite $(n-1)$-homological type, i.e., the integral Alexander modules $H_i(X^{\xi};\Z)$ are finitely generated abelian groups for any $i < n$. 
\item[(b)] If $X$ is obtained from $Z$ by attaching cells of dimension exactly $n$ and $Z^{\xi}$ has finite homological type, then $H_i(X^{\xi};\Z)$ is a finitely generated abelian group for any $i \neq n$.
\end{enumerate}
\ep

\begin{proof} Without loss of generality, we may assume that $X$ and $Z$ are CW complexes. 
Let $j:Z \hookrightarrow X$ denote as before the inclusion map. Since $X \setminus Z$ contains only cells of dimension $\geq n$, the pair $(X,Z)$ is $(n-1)$-connected. So  the homomorphism 
$j_*: \pi_1(Z) \to \pi_1(X)$ induced by inclusion is onto if $n=2$ and it is an isomorphism if $n>2$.

\noindent{(a)} If $X$ is obtained from $Z$ by adding cells of dimension $\geq n$, the same is true for the infinite cyclic covers $X^{\xi}$ and $Z^{\xi}$ defined by $\xi$ and $\xi \circ j_*$, respectively. It follows that the induced group homomorphism
$$H_i(Z^{\xi};\Z) \to H_i(X^{\xi};\Z)$$
is an isomorphism for $i<n-1$ and it is onto for $i=n-1$. The result follows now from the fact that $Z^{\xi}$ has finite $(n-1)$-homological type. 

\noindent{(b)} If $X$ is obtained from $Z$ by attaching cells of dimension exactly $n$,   the same is true for the infinite cyclic covers $X^{\xi}$ and $Z^{\xi}$. From the homology long exact sequence of the pair $(X^{\xi},Z^{\xi})$, it follows that the induced group homomorphism
$$H_i(Z^{\xi};\Z) \to H_i(X^{\xi};\Z)$$
is an isomorphism for $i<n-1$ and $i>n$, and it is onto for $i=n-1$.
The claim follows since $Z^{\xi}$ has finite homological type.
\end{proof}

\br\label{rn1}\rm
As it can be easily seen from the above proof, the conclusions of Proposition \ref{pp1} still hold if we only assume that the infinite cyclic cover  $X^{\xi}$ is built (up to homotopy) from a subspace of finite homological type by attaching (possibly infinitely many) cells of dimension $\geq n$, and resp. $n$. This is in fact the case for smooth closed subvarieties of complex semi-abelian varieties (see Theorem \ref{ti1}). In particular, this observation yields a proof of Theorem \ref{ti3} from the Introduction.
\er

\br\rm When the integral Alexander module $H_i(X^\xi; \Z)$ is a torsion $\Z[t,t^{-1}]$-module, it is not necessarily finitely generated as a $\Z$-module. For example, as noted in \cite[section 2]{Mil}, the integral Alexander module of the complement $X$ in $S^3$ of the knot $5_2$ is $\Z[t,t^{-1}] /(2t^2 -3t+2)$, which is torsion over $\Z[t,t^{-1}]$, but not finitely generated over $\Z$. 
\er

We next give a concrete geometric situation when the assumptions of Proposition \ref{pp1} are fullfilled. We begin with the following simple observation.
\begin{lemma}\label{l1}
Let $f:X \to S^1$ be a locally trivial topological fibration with fiber $F$ of finite $k$-homological type, and let $\xi:=f_*:\pi_1(X)\to \pi_1(S^1)\cong \Z$. Then $X^{\xi}$ has finite $k$-homological type, i.e., $H_i(X^{\xi};\Z)$ is a finitely generated abelian group for any $i \leq k$. If, moreover, the fiber $F$ has finite homological type, then so does $X^{\xi}$.
\end{lemma}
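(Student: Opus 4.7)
The plan is to identify the infinite cyclic cover $X^\xi$ with a pullback fibration, exploit the contractibility of $\R$, and reduce everything to the homology of the fiber $F$.

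First I would note that, because $\xi = f_*$ and the universal cover $\R \to S^1$ corresponds to the trivial subgroup of $\pi_1(S^1) \cong \Z$, the covering of $X$ associated to $\ker(\xi) = \ker(f_*)$ is canonically identified with the pullback $X \times_{S^1} \R$. This is consistent with the description of $X^\xi$ recalled earlier in the paper, and it handles uniformly the case when $\xi$ fails to be surjective (so $X^\xi$ decomposes into $[\Z : \im(\xi)]$ homeomorphic connected components).

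Next, since $f$ is a locally trivial fibration with fiber $F$, the base change $X^\xi \to \R$ is again a locally trivial fibration with the same fiber $F$. As $\R$ is contractible and paracompact, I would use the long exact sequence of homotopy groups to conclude that the inclusion $F \hookrightarrow X^\xi$ of any fiber induces isomorphisms on all homotopy groups. Whitehead's theorem, together with the standing assumption that all spaces are homotopy equivalent to CW complexes, then upgrades this weak equivalence to a genuine homotopy equivalence $X^\xi \simeq F$.

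From this equivalence both statements of the lemma follow at once, since $H_i(X^\xi; \Z) \cong H_i(F; \Z)$ for every $i \geq 0$: finite generation of $H_i(F; \Z)$ through degree $k$ (resp.\ in all degrees) transfers directly to $X^\xi$. I do not foresee any genuine obstacle; the argument is a routine combination of the pullback description of $X^\xi$, contractibility of $\R$, and Whitehead's theorem, with the non-surjective case absorbed into the pullback formulation rather than handled as an exception.
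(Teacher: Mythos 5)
Your proof is correct and follows essentially the same route as the paper, which identifies $X^{\xi}$ with the pullback of $f$ along $\R\to S^1$, observes it is homeomorphic to $F\times \R$ (hence homotopy equivalent to $F$), and transfers homology. Your only variation is deducing the homotopy equivalence $X^{\xi}\simeq F$ via the long exact sequence of the fibration and Whitehead's theorem instead of invoking triviality of a bundle over the contractible paracompact base $\R$, which is an equally valid and equivalent step.
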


\begin{proof} The infinite cyclic cover $X^{\xi}$  is homeomorphic to $F \times \R$, hence homotopy equivalent to $F$, with compatible $\Z$-actions. It follows that for any $i \geq 0$, we have an isomorphism of abelian groups (and $\Z[t^{\pm 1}]$-modules):
\be
H_i(X^{\xi};\Z) \cong H_i(F;\Z),
\ee
which yields the claim.
\end{proof}

As an immediate consequence of Lemma \ref{l1} and Proposition \ref{pp1}, we have the following:
\bp\label{pp2}
Let $X$ be a connected topological space of finite homotopy type, and fix a homomorphism $\xi:\pi_1(X)\to \Z$ with a homotopy representative $f_{\xi}:X \to S^1$. 
\begin{enumerate}
\item[(a)] Assume that $X$ is obtained from a subspace $Z$, by attaching cells of dimension $\geq n$ (with $n \geq 2$). Moreover, assume that $f_{\xi}|_Z:Z \to S^1$ is a locally trivial topological fibration whose fiber $F$ has finite $(n-1)$-homological type. 
Then the infinite cyclic cover $X^{\xi}$ of $X$ has finite $(n-1)$-homological type.
\item[(b)] If $X$ is obtained from $Z$ by attaching cells of dimension exactly $n$, and the fiber of the fibration $f_{\xi}|_Z:Z \to S^1$ has finite homological type, then $H_i(X^{\xi};\Z)$ is a finitely generated abelian group for any $i \neq n$.
\end{enumerate}
\ep

\bex\label{ex}\rm
As main examples of topological spaces satisfying the assumptions of Proposition \ref{pp2}, we mention the following:
\begin{enumerate}
\item complements in $\C^n$ of hypersurfaces in general position (or regular) at infinity, e.g., see \cite{Ma06,DL06}; here, if $X=\C^n \setminus \{ f=0\}$, we can take $\xi=f_*$, or any other positive homomorphism; the fibration structure corresponds in this case to the complement of the {\it link at infinity}.
\item complements of essential hyperplane arrangements in $\C^n$, with $\xi$ a positive (rank $1$) homomorphism, e.g., see \cite{Di02,KP15}; the fibration structure is given in this case by the {\it boundary manifold} of the arrangement. 
\item smooth connected very affine varieties, see Theorem \ref{ti2}; note that essential hyperplane arrangement complements in $\C^n$, as well as complements of toric hyperplane arrangements (e.g., studied in \cite{dCP1,dCP2}) are examples of smooth very affine varieties.
\end{enumerate}
Moreover, in all these cases, the space $X$ under cosideration is built from a fibration over the circle $S^1$ by adding $|\chi(X)|$ cells of middle (real) dimension $n$.
\eex


\subsection{Signed Euler characteristic property}

\bp\label{se} Let $X$ be a connected topological space of finite homotopy type, and fix a homomorphism $\xi:\pi_1(X)\to \Z$. Assume that $X$ is obtained up to homotopy from a subspace $Z$, by attaching $\ell$ cells of dimension exactly $n$. Moreover, assume that the infinite cyclic cover $Z^{\xi}$ has finite homological type. Then \be(-1)^n \chi(X) = \ell \geq 0.\ee
\ep

\begin{proof} Let $H_i(X^{\xi};\C)$ denote the $i$-th complex Alexander module of the pair $(X,\xi)$. It follows from Proposition \ref{pp1}(b) that $H_i(X^{\xi};\C)$ is a torsion $\C[t,t^{-1}]$-module, for all $i \neq n$. Since the $\C[t,t^{-1}]$-ranks of the complex Alexander modules can be used to compute the Euler characteristic of $X$, we then have that:
$$\chi(X)=(-1)^n \rk_{\C[t,t^{-1}]} H_n(X^{\xi};\C),$$ which yields the signed Euler characteristic property. Similarly, it follows by our assumption that the complex Alexander modules $H_i(Z^{\xi};\C)$ are torsion over $\C[t,t^{-1}]$, for all $i \geq 0$. 
The fact that $\rk_{\C[t,t^{-1}]} H_n(X^{\xi};\C)=\ell$ follows now from the homology long exact sequence of the pair $(X^{\xi},Z^{\xi})$, by noting that $H_n(X^{\xi},Z^{\xi};\C)\cong \C[t,t^{-1}]^{\oplus \ell}$, and $H_i(X^{\xi},Z^{\xi};\C)=0$ for all $i \neq n$.
\end{proof}

In view of Remark \ref{rn1} and Theorem \ref{ti1}, Proposition \ref{se} yields the signed Euler characteristic property (\ref{e1}) for smooth closed subvarieties of semi-abelian varieties, i.e., a proof of Corollary \ref{ci1}.


\subsection{Generic vanishing}

The following is an application of Propositions \ref{pp1} and \ref{pp2}.
\begin{proposition}\label{gv}
Let $X$ be a connected topological space of  finite homotopy type, with an epimorphism $\xi: \pi_1(X)\to \Z$. Assume that the associated infinite cyclic cover $X^\xi$ of $X$ has finite $k$-homological type. Given any $s\in \C^*$, denote by $L^{\xi}_s$ the rank-one $\C$-local system on $X$ whose monodromy representation is given by 
$$\pi_1(X)\xrightarrow{\xi} \Z\to \C^*,$$
where the homomorphism $ \Z\to \C^*$ is defined by $1\mapsto s$. Then for all but finitely many $s\in \C^*$, we have:
$$H_i(X, L^{\xi}_s)=0, \text{ for all } i\leq k.$$
\end{proposition}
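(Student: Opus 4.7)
The plan is to reduce the statement to an elementary observation about eigenvalues, by translating $H_*(X; L^\xi_s)$ into the derived tensor product of the Alexander chain complex with the $\C[t^{\pm 1}]$-module $\C_s := \C[t^{\pm 1}]/(t-s)$. Since $X$ has finite homotopy type, I would replace it (up to homotopy) by a finite CW complex $Y$ and consider the cellular chain complex $C_* := C_*(Y^\xi; \C)$, viewed as a bounded complex of finitely generated free $\C[t^{\pm 1}]$-modules via the deck transformation action of $t$. The standard identification of twisted homology yields
$$H_i(X; L^\xi_s) \;\cong\; H_i\bigl(C_* \otimes_{\C[t^{\pm 1}]} \C_s\bigr),$$
since $L^\xi_s$ is precisely the local system associated to the one-dimensional $\C[t^{\pm 1}]$-module $\C_s$.

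Next, I would exploit the two-term free resolution
$$0 \longrightarrow \C[t^{\pm 1}] \xrightarrow{\,t-s\,} \C[t^{\pm 1}] \longrightarrow \C_s \longrightarrow 0.$$
Because $C_*$ is termwise free, tensoring preserves exactness, and the resulting short exact sequence of chain complexes gives a long exact sequence in homology of the form
$$\cdots \to H_i(X^\xi; \C) \xrightarrow{\,t-s\,} H_i(X^\xi; \C) \to H_i(X; L^\xi_s) \to H_{i-1}(X^\xi; \C) \xrightarrow{\,t-s\,} H_{i-1}(X^\xi; \C) \to \cdots.$$
This reduces the vanishing of $H_i(X; L^\xi_s)$ to the invertibility of $t-s$ on $H_i(X^\xi; \C)$ and $H_{i-1}(X^\xi; \C)$.

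The hypothesis that $X^\xi$ has finite $k$-homological type says that $H_i(X^\xi; \Z)$ is a finitely generated abelian group for $i \leq k$, so $H_i(X^\xi; \C) = H_i(X^\xi; \Z) \otimes_{\Z} \C$ is a finite-dimensional $\C$-vector space on which $t$ acts as a linear endomorphism $T_i$. Multiplication by $t-s$ is then an isomorphism on $H_i(X^\xi; \C)$ precisely when $s$ avoids the finite spectrum $\Sigma_i$ of $T_i$. Taking $S := \bigcup_{i \leq k} \Sigma_i \subset \C^*$, a finite set, the long exact sequence forces $H_i(X; L^\xi_s) = 0$ for every $i \leq k$ and every $s \in \C^* \setminus S$. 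The only real subtlety is the chain-level identification of $C_*(X; L^\xi_s)$ with $C_* \otimes_{\C[t^{\pm 1}]} \C_s$; once this standard fact is invoked, the argument is purely a finite-dimensional linear algebra computation.
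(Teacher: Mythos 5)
Your proof is correct and follows essentially the same route as the paper: the exact sequence you derive from the resolution $0 \to \C[t^{\pm 1}] \xrightarrow{t-s} \C[t^{\pm 1}] \to \C_s \to 0$ is precisely the Milnor long exact sequence the paper cites, and your eigenvalue condition on the $t$-action on the finite-dimensional spaces $H_i(X^\xi;\C)$, $i \leq k$, is the same as the paper's condition that $s$ avoid the roots of the corresponding Alexander polynomials. The only difference is that you derive the exact sequence directly rather than quoting it, which is a fine (and self-contained) way to present the same argument.
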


\begin{proof}
Consider the Milnor long exact sequence (e.g., see \cite[Theorem 4.2]{DN04}):
$$\cdots \lra H_i(X^\xi;\C) \overset{t-s}{\lra} H_i(X^\xi;\C) \lra H_i(X, L^{\xi}_s) \lra H_{i-1}(X^\xi;\C) \lra \cdots.$$
Under our assumptions, $H_i(X^\xi;\C)$ is a finitely generated torsion $\C[t,t^{-1}]$-module for any $i \leq k$.  Hence multiplication by $t-s$ on $H_i(X^\xi;\C)$ is an isomorphism (for $i \leq k$) provided that $s$ is not a root of the Alexander polynomial associated to $H_i(X^\xi;\C)$. The claim follows.
\end{proof}

In the context of Proposition \ref{pp1} (b) and in the notations of the previous Proposition, we also have the following:
\bp\label{gvb} Let $X$ be a connected topological space of finite homotopy type, with an epimorphism $\xi: \pi_1(X)\to \Z$.  Assume that $X$ is obtained up to homotopy from a subspace $Z$, by attaching cells of dimension exactly $n$. Moreover, assume that the infinite cyclic cover $Z^{\xi}$ has finite homological type. Then for all but finitely many $s\in \C^*$, we have:
$$H_i(X, L^{\xi}_s)=0, \text{ for all } i \neq n.$$
\ep

\begin{proof} Proposition \ref{pp1}(b) yields that the complex Alexander modules $H_i(X^{\xi};\C)$ are torsion $\C[t,t^{-1}]$-modules, for all $i \neq n$. Let $\Delta^{\xi}_i(t)$ denote the associated Alexander polynomial. We also let $\Delta^{\xi}_n(t)$ be the Alexander polynomial associated to the torsion part of the finitely generated $\C[t,t^{-1}]$-module $H_n(X^{\xi};\C)$.  

By using the above Milnor long exact sequence, it follows as in \cite[Theorem 4.2]{DN04}, that for any $i \geq 0$, we have:
$$\dim H_i(X, L^{\xi}_s)=\rk_{\C[t,t^{-1}]} H_i(X^\xi;\C) + N(s,i)+N(s,i-1),$$ 
where $N(s,i)$ is the number of direct summands in the $(t-s)$-torsion of the $\C[t,t^{-1}]$-module $H_i(X^\xi;\C)$. So, as long as $s \in \C^*$ is chosen such that $H_i(X^{\xi};\C)$ has no $(t-s)$-torsion for any $i \geq 0$ (or, equivalently, $s$ is not a root of any of the Alexander polynomials $\Delta^{\xi}_i(t)$, for all $i \geq 0$), we have that $H_i(X, L^{\xi}_s)=0, \text{ for all } i \neq n$ and $\dim H_n(X, L^{\xi}_s)=\rk_{\C[t,t^{-1}]} H_n(X^\xi;\C)=(-1)^n \chi(X)$.
\end{proof}

\br\label{rn2}\rm
As in Remark \ref{rn1}, the generic vanishing of Proposition \ref{gvb} still holds if we only assume that $X^\xi$ is built (up to homotopy) from a subspace of finite homological type by adding (possibly infinitely many) cells of dimension $n$. In particular, in view of Theorem \ref{ti1}, this yields a proof of Corollary \ref{ci2} from the Introduction.
\er


\subsection{Novikov-Betti and torsion numbers}\label{NB}
Finiteness properties of infinite cyclic covers translate into vanishing of the corresponding Novikov-Betti and resp. Novikov-torsion numbers. For the convenience of the reader, we include here the relevant definitions.

Let $X$ be a connected topological space of finite homotopy type, and fix $\xi \in H^1(X;\R)$. Via the canonical identification 
$$H^1(X;\R)\cong \Hom(H_1(X),\R) \cong \Hom(\pi_1(X),\R)$$
any real cohomology class  $\xi \in H^1(X;\R)$ 
determines a homomorphism $\xi\colon \pi_1(X) \to \R$, where $\R$ is considered as a group with the usual addition. (We use here the same notation for the cohomology class and the corresponding homomorphism on the fundamental group; it should be clear from the context which one is referred to.) Let $\ga_{\xi}$ denote the image of the homomorphism $\xi$. Then $\xi$ factors as $\xi=i_{\xi} \circ \eta_{\xi}$, with $\eta_{\xi}:\pi_1(X) \twoheadrightarrow \ga_{\xi}$ an epimorphism and $i_{\xi} : \ga_{\xi} \hookrightarrow \R$ a monomorphism.
The group $\ga_{\xi}$ is a finitely generated free abelian group, say $\ga_{\xi} \cong \Z^r$, $r \geq 0$. The integer $r$ is called the {\it rank} of $\xi$. Note that if $\xi \in H^1(X;\Z)$ is an integral cohomology class, then it has rank $1$. The set of rank $1$ classes is dense in $H^1(X;\R)$.

Let $$X^{\xi} \lra X$$ be the covering of $X$ defined by $\ker (\xi)$. The group of covering transformations of $X^{\xi}$ is isomorphic to $\ga_{\xi}$, so the cellular groups $C_i(X^{\xi};\Z)$, as well as the homology groups $H_i(X^{\xi};\Z)$, become (finitely generated) left modules over the group ring $\Z\ga_{\xi} \cong \Z\tbigpm$. Let $Q_{\xi}$ denote the field of fractions of $\Z\ga_{\xi}$.
\bd The {\it $i$-th Novikov-Betti number} $b_i(X,\xi)$ of the pair $(X,\xi)$ is the $\Z\ga_{\xi}$-rank of the module $H_i(X^{\xi};\Z)$, i.e., the dimension of the $Q_{\xi}$-vector space $Q_{\xi} \otimes_{\Z\ga_{\xi}} H_i(X^{\xi};\Z)$. 
\ed

\br\rm Note that if $\xi\in H^1(X;\Z)$ is an integral cohomology class corresponding to an epimorphism $\xi:\pi_1(X) \to \Z$, then  the $i$-th Novikov-Betti number $b_i(X,\xi)$  is just the $\Z[t^{\pm 1}]$-rank of the integral Alexander module $H_i(X^{\xi};\Z)$.
\er

Alternatively, following \cite{F04}, one can define the Novikov-Betti numbers of $(X,\xi)$ as ranks over the {\it rational Novikov ring} of $\ga_{\xi}$, which is a certain localization of $\Z\ga_{\xi}$. In more detail, a Laurent polynomial $p=\sum_{\g} n_{\g}\g \in \Z\ga_{\xi}$ is called monic if $n_e=1$ and if for all $\gamma\ne e$ with $n_\gamma\ne 0$ we have $i_\xi(\gamma)<0$. 
Here $e$ is the identity element in the group $\ga_{\xi}$, hence also the multiplicative identity element in the ring $\Z\ga_{\xi}.$  Denote by $S_{\xi}$ the set of all monic polynomials in $\Z\ga_{\xi}$. 
 \bd The {\it rational Novikov ring} $\RC\ga_{\xi}$ of $\ga_{\xi}$ is defined as the localization of $\Z\ga_{\xi}$ at the multiplicative set $S_{\xi}$, that is, 
$$\RC\ga_{\xi}:=S_{\xi}^{-1}\Z\ga_{\xi}.$$
\ed 
The advantage of working over the rational Novikov ring is that, as shown in \cite[Lemma~1.15]{F04}, the ring $\RC\ga_{\xi}$ is a PID. 
Next note that 
\be
S_{\xi}^{-1}C_*(X^{\xi}):=S_{\xi}^{-1}\Z\ga_{\xi} \otimes_{\Z\ga_{\xi}} C_*(X^{\xi}) \cong \RC\ga_{\xi} \otimes_{\Z\pi_1(X)} C_*(\wti{X}),
\ee
where $\wti{X}$ denotes the universal cover of $X$. 
Hence, by the exactness of the localization functor, one has an isomorphism of $\RC\ga_{\xi}$-modules:
\be
H_*(X;\RC\ga_{\xi})\cong S_{\xi}^{-1}H_*(X^{\xi};\Z).
\ee
It follows that the $i$-th Novikov-Betti number of $(X,\xi)$ equals
\be b_i(X,\xi)=\rk_{\RC\ga_{\xi}}H_i(X;\RC\ga_{\xi}).
\ee 

\bd The {\it $i$-th Novikov torsion number} $q_i(X,\xi)$ is defined as the minimal number of generators of the torsion submodule of the $\RC\ga_{\xi}$-module $H_i(X;\RC\ga_{\xi})$.
\ed

The following result is well-known, e.g., see \cite[Chapter~1]{F04}:
\bp\label{pr}  For any $\xi \in H^1(X;\R)$, the Euler-Poincar\'e characteristic of $X$ is computed as: $$\chi(X)=\sum_{i \geq 0} (-1)^i \cdot b_i(X,\xi).$$
\ep
We also recall here the following simple but useful fact, see \cite[Proposition~1.35]{F04}:

\bp\label{pfg} Suppose that $\xi \in H^1(X;\R)$, $\xi \neq 0$, and the $i$-th homology $H_i(X^{\xi};\Z)$ is finitely generated as an abelian group. Then $b_i(X,\xi)=q_i(X,\xi)=0$. 
\ep

In particular, this result applies in the context of Propositions \ref{pp1} and \ref{pp2}. Namely, Proposition \ref{pp1} yields the following result (compare also with \cite{FM16, KP15} for special cases):

\begin{corollary}\label{cc1}
Let $X$ be a connected topological space of finite homotopy type,  and fix a (non-zero) epimorphism $\xi:\pi_1(X)\to \Z$. 
\begin{itemize}
\item[(a)] Assume that $X$ is obtained up to homotopy from a subspace $Z$, by attaching cells of dimension $\geq n$ (with $n \geq 2$). Moreover, assume that  the infinite cyclic cover $Z^{\xi}$ defined by the kernel of the composition $\pi_1(Z)\to \pi_1(X)\overset{\xi}{\to} \Z$ has finite $(n-1)$-homological type. Then, for any $i < n$, we have that $b_i(X,\xi)=0$ and $q_i(X,\xi)=0$.  
\item[(b)] If $X$ is obtained from $Z$ by attaching cells of dimension exactly $n$ and $Z^{\xi}$ has finite homological type, then $b_i(X,\xi)=q_i(X,\xi)=0$ for any $i \neq n$, and $b_n(X,\xi)=(-1)^n\chi(X)$.
\end{itemize}
\end{corollary}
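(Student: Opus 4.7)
The plan is to deduce Corollary~\ref{cc1} essentially by assembling three results already established in this section: the finite generation statements of Proposition~\ref{pp1}, the vanishing criterion of Proposition~\ref{pfg}, and the Euler-characteristic identity of Proposition~\ref{pr}. Since $\xi\colon\pi_1(X)\to\Z$ is a non-zero epimorphism, the corresponding free abelian quotient $\gamma_\xi$ is just $\Z$, so $\Z\gamma_\xi\cong\Z[t^{\pm 1}]$ and the Novikov-Betti and torsion numbers attached to $\xi$ are the invariants of the integral Alexander modules $H_i(X^\xi;\Z)$ considered in Section~\ref{AM}.

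For part (a), I would first invoke Proposition~\ref{pp1}(a) to conclude that, under the stated hypotheses on the pair $(X,Z)$ and on $Z^\xi$, the Alexander modules $H_i(X^\xi;\Z)$ are finitely generated as abelian groups for every $i<n$. Proposition~\ref{pfg} then applies in each such degree and yields $b_i(X,\xi)=q_i(X,\xi)=0$ for $i<n$, which is precisely the assertion in (a).

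For part (b), I would repeat the same two-step argument but now starting from Proposition~\ref{pp1}(b), which gives finite generation of $H_i(X^\xi;\Z)$ as an abelian group for \emph{every} $i\neq n$. Proposition~\ref{pfg} then forces $b_i(X,\xi)=q_i(X,\xi)=0$ for all $i\neq n$. To finish, I would plug this vanishing into the Novikov-Betti computation of the Euler characteristic in Proposition~\ref{pr}:
\[
\chi(X)=\sum_{i\geq 0}(-1)^i\,b_i(X,\xi)=(-1)^n\,b_n(X,\xi),
\]
from which $b_n(X,\xi)=(-1)^n\chi(X)$ is immediate.

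There is no real obstacle here beyond bookkeeping: everything substantive has been set up in the preceding subsections. The only point I would double-check carefully while writing the argument is that the normalizations of $\Z\gamma_\xi$ and $\RC\gamma_\xi$ in Section~\ref{NB} match the Alexander-module framework of Section~\ref{AM} when $\xi$ is a rank-one epimorphism onto $\Z$, so that Proposition~\ref{pfg} can be quoted verbatim in both parts.
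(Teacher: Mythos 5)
Your proposal is correct and follows exactly the route the paper intends: Proposition \ref{pp1} gives finite generation of the Alexander modules in the relevant degrees, Proposition \ref{pfg} converts this into vanishing of $b_i(X,\xi)$ and $q_i(X,\xi)$, and Proposition \ref{pr} then pins down $b_n(X,\xi)=(-1)^n\chi(X)$ in part (b). Your normalization check is also fine, since for an epimorphism onto $\Z$ one has $\ga_\xi=\Z$ and $\Z\ga_\xi\cong\Z[t^{\pm 1}]$, so the Novikov invariants are precisely those of the integral Alexander modules.
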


Note that Theorem \ref{ti3} together with Propositions \ref{pr} and \ref{pfg} yields now a proof of Corollary \ref{ci3}.


\subsection{$L^2$-Betti numbers}\label{l2}
Novikov-Betti numbers can be regarded as special cases of $L^2$-Betti numbers. In fact, the following identification holds:
\[ b_i(X;\xi)=b_i^{(2)}(X,\xi\colon \pi_1(X) \to \im(\xi)).\]
However, Novikov torsion numbers do not have such interpretation in terms of $L^2$-invariants.
For completeness, we include here the definition of $L^2$-Betti numbers. 

\medskip

Let $\G$ be a countable group, with $$l^2(\G):=\{ f:\G\to \C \, | \, \sum_{g\in \G}  |f(g)|^2<\infty \}$$
the Hilbert space of square-summable functions on $\Gamma$. Then $\G$ acts on $l^2(\G)$ by right multiplication, so $\C[\G]$ can be regarded as a subset of $\BB(l^2(\G))$, the set of bounded operators on $l^2(\G)$.
The {\it von Neumann algebra} $\ng$ of $\G$ is defined as the closure of $\C[\G]\subset \BB(l^2(\G))$ with respect to pointwise convergence in
$\BB(l^2(\G))$. Any $\ng$--module $\MC$ has a dimension $\dim_{\ng}(\MC)\in \R_{\geq 0}\cup \{\infty\}$. We refer to \cite[Def.6.50]{Lu02} for details.

\medskip

\bd To any connected topological space $X$ of finite homotopy type and a group homomorphism $\alpha:\pi_1(X) \to \Gamma$, we associate {\it $L^2$-Betti numbers} 
$$b_i^{(2)}(X,\alpha):={\dim}_{\mathcal{N}(\Gamma)} H_i(X;\mathcal{N}(\Gamma)):={\dim}_{\mathcal{N}(\Gamma)} 
H_i \big(C_*(X^{\G}) \otimes_{\mathbb{Z}\Gamma} \mathcal{N}(\Gamma) \big) \in [0,\infty],$$ where $X^{\G}$ is the regular covering of $X$ defined by $\ker(\alpha)$, and $C_*(X^{\Gamma})$ is the singular chain complex of $X^{\Gamma}$ with (right) $\Gamma$-action given by covering translations. If $X$ is a CW complex, the cellular chain complex of $X^{\Gamma}$ can be used in the above definition of $L^2$-Betti numbers. 
\ed
\br\rm
Since 
\[ b_i^{(2)}(X,\a: \pi_1(X)\to \G)=b_i^{(2)}(X,\a: \pi_1(X)\to \im(\a)),\]
we can assume without any loss of generality that $\a$ is an epimorphism.\er

The following properties of $L^2$-Betti numbers are standard, we refer to \cite{Lu02} for details:
\begin{lemma}\label{l2}
For any connected topological space $X$ and group homomorphism $\alpha:\pi_1(X) \to \Gamma$,
\begin{itemize}
\item[(a)] $b_i^{(2)}(X,\alpha)$ is a homotopy invariant of the pair $(X,\alpha)$.
\item[(b)] $b_0^{(2)}(X,\a)=0$ if $\G$ is infinite and $b_0^{(2)}(X,\a)=\frac{1}{|\G|}$ if $\G$ is finite.
\item[(c)] if $X$ is of finite homotopy type, then 
$$\chi(X)=\sum_{i\geq 0} (-1)^i b_i^{(2)}(X,\alpha).$$
\end{itemize}
\end{lemma}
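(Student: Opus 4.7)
All three statements are formal consequences of standard properties of the von Neumann dimension function $\dim_{\ng}$ together with basic homological algebra applied to the singular/cellular $\Z\G$-chain complex of the regular $\G$-cover $X^\G$. The overall strategy is to reduce each item to a known property of the functor $-\otimes_{\Z\G}\ng$ together with the additivity/continuity of $\dim_{\ng}$, all of which are worked out in \cite{Lu02}.

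For (a), suppose $(X,\a_X)$ and $(Y,\a_Y)$ are related by a homotopy equivalence $f\colon X\to Y$ compatible with the reference homomorphisms, i.e., $\a_Y\circ f_*=\a_X$. By the equivariant lifting property, $f$ lifts to a $\G$-equivariant homotopy equivalence $\ti f\colon X^\G \to Y^\G$ of the regular $\G$-covers, inducing a $\Z\G$-chain homotopy equivalence $C_*(X^\G)\simeq C_*(Y^\G)$. Tensoring with $\ng$ over $\Z\G$ preserves this chain homotopy equivalence, yielding a $\ng$-isomorphism on the homology of the associated complexes, and thus the equality $b_i^{(2)}(X,\a_X)=b_i^{(2)}(Y,\a_Y)$ after applying $\dim_{\ng}$.

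For (b), I would identify $H_0(X;\ng)\cong \ng\otimes_{\Z\G}\Z$, where $\Z$ carries the trivial $\G$-action. When $\G$ is finite, $\ng=\C[\G]$ equipped with its normalized trace, and the projection onto the trivial representation has trace $1/|\G|$, giving $b_0^{(2)}(X,\a)=1/|\G|$. When $\G$ is infinite, one shows $\dim_{\ng}(\ng\otimes_{\Z\G}\Z)=0$ via the short exact sequence $0\to J\to \ng \to \ng\otimes_{\Z\G}\Z\to 0$, where $J$ is the right ideal generated by $\{g-1:g\in\G\}$, combined with Lück's dimension formula for quotients of $\ng$ by a right ideal of an infinite group.

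For (c), the assumption that $X$ has finite homotopy type permits (by (a)) the replacement of $X$ with a finite CW-model having $c_i$ cells in dimension $i$. Then $C_*(X^\G)$ is a bounded complex of finitely generated free $\Z\G$-modules, so $C_*(X^\G)\otimes_{\Z\G}\ng$ is a bounded complex of finitely generated free $\ng$-modules with $\dim_{\ng}$-ranks equal to $c_i$. The additivity of $\dim_{\ng}$ on short exact sequences of finitely generated $\ng$-modules then forces the alternating sum of the chain-level dimensions to equal the alternating sum of the homology dimensions, i.e., $\sum_i(-1)^i c_i=\sum_i(-1)^i b_i^{(2)}(X,\a)$, and the left-hand side is $\chi(X)$. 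The main technical subtlety is the care required for $\dim_{\ng}$ on modules that are not finitely presented (which is genuinely needed for (b) in the infinite case), but all such facts are established in \cite{Lu02}; otherwise, the proofs are formal.
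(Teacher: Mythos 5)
The paper gives no proof of this lemma at all, simply citing L\"uck's book \cite{Lu02}, and your sketch is a correct outline of exactly the standard arguments that citation refers to: an equivariant chain homotopy equivalence after tensoring with $\ng$ for (a), the computation of $\dim_{\ng}$ of the trivial module for (b), and additivity of the von Neumann dimension applied to the finite free chain complex for (c). The only point to keep in mind is that your identification $H_0(X;\ng)\cong \ng\otimes_{\Z\G}\Z$ uses connectedness of the cover $X^{\G}$, i.e.\ surjectivity of $\a$, which is harmless in view of the paper's remark that one may assume $\a$ is an epimorphism.
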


\br\rm Since all spaces we work with are homotopy equivalent to CW complexes, 
we can assume, without loss of generality, that all spaces in this section are CW-complexes.
\er

Let us now assume that $X$ is a finite CW complex obtained from a CW (sub)-complex $Z$ by attaching cells of dimension $\geq n$. 
Let $\a_Z$ be the composition
$$\a_Z:\pi_1(Z) \overset{j_*}{\lra} \pi_1(X) \overset{\a}{\lra} \G,$$  and let $Z^{\Gamma}$ be the corresponding $\Gamma$-cover of $Z$.
Let $b_i^{(2)}(Z,\a_Z)$ denote the $L^2$-Betti numbers of the pair $(Z,\a_Z)$. Then the following holds:
\bp\label{pp3} Assume that $X$ is a finite CW complex obtained from a CW (sub)-complex $Z$ by attaching cells of dimension $\geq n$. Then for any homomorphism $\a:\pi_1(X) \to \G$ we have:  
\[ b_i^{(2)}(X,\a) = b_i^{(2)}(Z,\a_Z), \ {\rm for} \ i<n-1, \]
and 
\[ b_{n-1}^{(2)}(X,\a) \leq b_{n-1}^{(2)}(Z,\a_Z).\]
If, moreover, $X$ is obtained from $Z$ by attaching cells of dimension exactly $n$, then we also have the equalities
\[ b_i^{(2)}(X,\a) = b_i^{(2)}(Z,\a_Z), \ {\rm for} \ i>n. \]
\ep
\begin{proof}
Since $X$ is obtained from $Z$ by adding cells of dimension $\geq n$, the same is true for their corresponding $\G$-coverings $X^{\G}$ and $Z^{\G}$, respectively. In particular, $C_i(Z^{\G})=C_i(X^{\G})$ for $i<n$. It follows that $H_i(Z;\mathcal{N}({\G}))\cong H_i(X;\mathcal{N}({\G}))$ for $i<n-1$, and the induced homomorphism $$H_{n-1}(Z;\mathcal{N}({\G}))\lra H_{n-1}(X;\mathcal{N}({\G}))$$ is surjective. The first claim follows now from  \cite[Thm.6.7]{Lu02}.

If, moreover, $X$ is obtained from $Z$ by attaching cells of dimension exactly $n$, then $C_i(Z,\mathcal{N}({\G}))=C_i(X,\mathcal{N}({\G}))$ for any $i \neq n$, and $C_n(X,\mathcal{N}({\G}))=C_n(Z,\mathcal{N}({\G})) \oplus P$, where $P$ is the free $\mathcal{N}({\G})$-module generated by the extra $n$-cells of $X$. Here, we use the notation $C_i(X,\mathcal{N}({\G})):=C_i(X^{\G}) \otimes_{\mathbb{Z}\Gamma} \mathcal{N}(\Gamma)$.
An easy diagram chase then yields the isomorphisms $H_i(Z;\mathcal{N}({\G}))\cong H_i(X;\mathcal{N}({\G}))$ for all $i>n$. This completes the proof.
\end{proof}


The $L^2$--Betti numbers provide obstructions for a space to fiber over a circle. More precisely, by \cite[Thm.1.39]{Lu02}, we have the following:

\begin{lemma}\label{l4} 
Let $X$ be a CW complex of finite homotopy type, and $f:X\to S^1$ a fibration with connected fiber $F$ of finite homotopy type. Assume that the epimorphism $f_*:\pi_1(X) \to \pi_1(S^1)=\Z$ admits a  factorization $\pi_1(X)\overset{\a}{\to} \G \overset{\beta}{\to} \Z$, with $\a$ and $\beta$ epimorphisms . Then $$b_i^{(2)}(X,\a)=0 \ , \ \ {\rm for \ all} \ i \geq 0.$$
\end{lemma}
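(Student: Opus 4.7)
The plan is to reduce $X$, up to homotopy, to a mapping torus and then invoke the $L^2$-vanishing statement for mapping tori that is the content of \cite[Thm.1.39]{Lu02}. Since $f\colon X\to S^1$ is a fibration with connected fiber $F$ of finite homotopy type, $X$ is homotopy equivalent to the mapping torus $T(h)$ of a self-homotopy equivalence $h\colon F\to F$ (the geometric monodromy); by homotopy invariance of $L^2$-Betti numbers (Lemma \ref{l2}(a)) we may replace $X$ by $T(h)$ throughout.

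Next I would unpack the factorization. The hypothesis $f_*=\beta\circ\a$ with $\beta\colon\G\twoheadrightarrow\Z$ gives, upon setting $K=\ker\beta$, a short exact sequence $1\to K\to \G\to \Z\to 1$, and a diagram chase using the surjectivity of $\a$ and $\beta$ together with the fibration sequence $1\to\pi_1(F)\to\pi_1(X)\xrightarrow{f_*}\Z\to 1$ shows that $\a$ restricts to a surjection $\pi_1(F)\twoheadrightarrow K$. Consequently the $\G$-cover $X^\G$ fits into a $\G$-equivariant fibration over $\R$ whose fiber is the $K$-cover $F^K$ of $F$, and one can construct a $K$-equivariant lift $\tilde h\colon F^K\to F^K$ of $h$ such that the generator of $\G/K\cong \Z$ acts on $F^K\times \R$ by translation by $1$ in the second factor combined with $\tilde h$ in the first.

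This mapping-torus structure on $X^\G$ yields, after applying $-\otimes_{\Z\G}\ng$, a Wang-type long exact sequence of $\ng$-modules
\be
\cdots \to H_i(F;\ng) \xrightarrow{\,\mathrm{id}-t\,\tilde h_*\,} H_i(F;\ng) \to H_i(X;\ng) \to H_{i-1}(F;\ng) \to \cdots,
\ee
where $t\in\G$ is any lift of a generator of $\G/K=\Z$. The finite homotopy type of $F$ ensures that each $H_i(F;\ng)$ has finite $\ng$-dimension, which is needed to make sense of the vanishing argument.

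The main obstacle, and the place where \cite[Thm.1.39]{Lu02} supplies the essential input, is the verification that $\mathrm{id}-t\,\tilde h_*$ is a weak isomorphism of $\ng$-modules. This rests on the functional-analytic fact that $1-t$ is not a zero divisor in $\ng$ whenever $t\in\G$ has infinite order, which here is guaranteed since $\beta(t)$ generates $\Z$. Combined with additivity of the von Neumann dimension across short exact sequences of $\ng$-modules, the weak-isomorphism property forces $\dim_{\ng}H_i(X;\ng)=0$ in every degree, i.e.\ $b_i^{(2)}(X,\a)=0$ for all $i\geq 0$, as required.
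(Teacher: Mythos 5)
Your opening paragraph is exactly the paper's proof: a fibration over $S^1$ with connected fiber of finite homotopy type exhibits $X$, up to homotopy, as the mapping torus of the monodromy $h\colon F\to F$, and the hypothesis $f_*=\beta\circ\a$ is precisely the factorization hypothesis of \cite[Thm.1.39]{Lu02}, which is the vanishing theorem for $L^2$-Betti numbers of mapping tori. Had you stopped there and simply invoked that theorem (together with homotopy invariance), the proof would be complete and essentially identical to the paper's, which cites \cite[Thm.1.39]{Lu02} with no further argument.

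The rest of the proposal, however, has a genuine gap. The operator in your Wang sequence is $\mathrm{id}-t\,\widetilde h_*$, an endomorphism of $H_i\bigl(C_*(F^K)\otimes_{\Z K}\ng\bigr)$ twisted by the lifted monodromy; it is not multiplication by the ring element $1-t$. The fact you appeal to --- that $1-t$ is a non-zero-divisor (indeed a weak isomorphism) in $\ng$ when $t$ has infinite order --- is true but gives no control whatsoever over $\mathrm{id}-t\,\widetilde h_*$, and you offer no other argument; for an arbitrary countable quotient $\G$ the weak-isomorphism claim is essentially equivalent to the statement being proved, so as written the argument is circular. Moreover, you misattribute this input to \cite[Thm.1.39]{Lu02}: that theorem asserts no weak-isomorphism property of $\mathrm{id}-t\,\widetilde h_*$; it \emph{is} the mapping-torus vanishing statement, and its proof proceeds not through a Wang sequence but through the finite cyclic covers $T(h^n)\to T(h)$, multiplicativity of $L^2$-Betti numbers under finite coverings, and the bound of $b_i^{(2)}$ by the number of $i$-cells of $T(h^n)$, which is bounded independently of $n$, forcing $b_i^{(2)}=0$ in the limit. (A Wang-type argument can be made to work when $\G=\Z$, where nonzero matrices over $\C[\Z]$ act as weak isomorphisms, but the lemma is needed for general $\xi$-admissible quotients $\G$.) So either cite the theorem as a black box immediately after the mapping-torus reduction --- the paper's route --- or provide an actual proof of the weak-isomorphism step; the justification currently given would not survive scrutiny.
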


\bd An epimorphism $\a:\pi_1(X) \to \G$ is called {\it $\xi$-admissible} if $\xi$ factors through $\a$.
\ed

We can now prove the following vanishing result for the $L^2$-Betti numbers of a space $X$ satisfying the assumptions of Proposition \ref{pp2} (compare also with \cite{Ma14} for a more concrete situation). Recall that for a homomorphism $\xi:\pi_1(X)\to \Z$ (regarded as an element in $H^1(X,\Z)$), we have fixed a homotopy representative $f_{\xi}:X\to S^1$.

\bt\label{th2}
Let $X$ be a finite connected CW complex, and fix an epimorphism $\xi:\pi_1(X)\to \Z$ with a homotopy representative $f_{\xi}:X \to S^1$. Assume that $X$ is obtained from a CW (sub-)complex $Z$, by attaching cells of dimension $\geq n$ (with $n \geq 2$). Moreover, assume that $f_{\xi}|_Z:Z \to S^1$ is a locally trivial topological fibration with fiber $F$ of finite homotopy type.. Then, for any $\xi$-admissible epimorphism $\a:\pi_1(X) \to \G$, we have that  $b_i^{(2)}(X,\a) =0$ for all $i \leq n-1$. If, moreover, $X$ is obtained from $Z$ by attaching cells of dimension exactly $n$, then $b_i^{(2)}(X,\a) =0$ for all $i \neq n$, and $b_n^{(2)}(X,\a) =(-1)^n\chi(X)$.
\et

\begin{proof}
Since, as shown in the proof of Proposition \ref{pp1}, $j_*:\pi_1(Z)\to \pi_1(X)$ is onto, it follows that $f_{\xi}|_Z$ induces an epimorphism on fundamental groups, so its fiber is connected. It also follows 
 that $\a_Z:\pi_1(Z)\to \G$ is an epimorphism, which is clearly admissible for $\xi \circ j_*=(f_{\xi}|_Z)_*$.
By Lemma \ref{l4} applied to the pair $(Z,\a_Z)$, we have that $b_i^{(2)}(Z,\a_Z)=0$ for all $i \geq 0$.
The desired result follows now from Proposition \ref{pp3}, together with Lemma \ref{l2}(c).\end{proof}

\br\label{l2r}\rm
The above result proves Corollary \ref{L2} from the Introduction (see also Remark \ref{rla})), and  gives another topological interpretation of the signed Euler characteristic property (\ref{e1}) for smooth closed subvarieties of semi-abelian varieties. (The proof of  Corollary \ref{L2} is trivial for $\dim(X)=1$, since $X$ is homotopy equivalent to a $1$-dimensional CW-complex in this case.)
\er


\section{Non-proper Morse theory}\label{np}

In this section, we recall the main aspects of the non-proper Morse theory due to Palais-Smale \cite{PS64}, as well as its  circle-valued extension. Non-proper Morse theory is the main tool needed in this paper to study the topology of subvarieties of semi-abelian varieties.

\subsection{Non-proper Morse theory after Palais-Smale}

Let $f: M\to \R$ be a real-valued smooth function on a smooth manifold $M$. For any real numbers $a<b$, we define $f^{a,b}:=f^{-1}\big([a, b]\big)$ and $f^a:=f^{-1}\big((-\infty, a]\big)$. The following non-proper Morse theory result is due to Palais-Smale: 
\begin{theorem}[\cite{PS64}]\label{nonproper}
Let $M$ be a complete Riemannian manifold, and let $f: M\to \R$ be a real-valued Morse function satisfying the following: 
\begin{description}
\item[Condition]\label{condition} If $S$ is a subset of $M$ on which $|f|$ is bounded but on which $||\nabla f||$ is not bounded away from zero, then there exists a critical point of $f$ in the closure of $S$. 
\end{description}
Then the following properties hold: 
\begin{enumerate}
\item For any real numbers $a<b$, there are finitely many critical points of $f$ in $f^{a,b}$.
\item Let $a, b$ be regular values of $f$. Suppose that there are $r$ critical points of $f$ in $f^{a, b}$ having index $d_1, \ldots, d_r$, respectively. Then $f^{b}$ has the homotopy type of $f^{a}$ with $r$ cells of dimensions $d_1, \ldots, d_r$ attached. 
\item If $c$ is a regular value of $f$, then $f$ has the structure of a fiber bundle in a small neighborhood of $c$. 
\end{enumerate}
\end{theorem}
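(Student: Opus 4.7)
The plan is to establish the three conclusions in order, treating Condition (C), the completeness of $M$, and the Morse hypothesis as the three ingredients that together rescue classical Morse theory in the non-proper setting. The common core of the argument is: use Condition (C) to trap the failure of compactness on (a neighborhood of) the critical set, and then run a standard normalized-gradient-flow argument on the complement.

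For part (1), I would argue by contradiction. Suppose $f^{a,b}$ contains infinitely many distinct critical points $p_1,p_2,\ldots$, and set $S=\{p_n\}$. Then $|f|$ is bounded on $S$ and $\|\nabla f\|\equiv 0$ on $S$, so Condition (C) produces a critical point $q\in\overline{S}$. Since a Morse critical point is isolated in the critical set, an infinite set of critical points cannot be closed and discrete without contradicting (C) applied to suitable subsequences; hence $q$ must be a genuine accumulation point, i.e.\ $p_{n_k}\to q$ for some subsequence of distinct $p_{n_k}$. But $q$ being a Morse critical point has a neighborhood free of other critical points, contradicting $p_{n_k}\to q$.

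For part (2), the key preparation is a uniform lower bound on $\|\nabla f\|$ away from the critical set in $f^{a,b}$. Let $p_1,\ldots,p_r$ be the critical points supplied by (1) and choose small disjoint Morse charts $N(p_i)$. If no such lower bound existed, there would be a sequence $x_n\in f^{a,b}\setminus\bigcup_i N(p_i)$ with $\|\nabla f(x_n)\|\to 0$, and Condition (C) would produce a critical point in $\overline{\{x_n\}}$, necessarily some $p_i$, contradicting the separation $x_n\notin N(p_i)$. With a bound $\|\nabla f\|\geq c>0$ outside the charts in hand, I would build a smooth vector field $X$ that equals $-\nabla f/\|\nabla f\|^2$ on the complement of $\bigcup_i N(p_i)$ and is smoothly interpolated with the standard local Morse model inside each chart. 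Because $\|X\|\leq 1/c$ off the charts and $M$ is complete, the flow of $X$ exists for the entire time needed to descend from level $b$ to level $a$, and it deforms $f^b$ onto $f^a$ with a cell of dimension equal to the Morse index of $p_i$ attached at each $p_i$, giving the desired homotopy description.

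Part (3) is the same mechanism applied to a thin slab $f^{c-\epsilon,c+\epsilon}$ containing no critical points: by (1) such an $\epsilon$ exists, and the argument of part (2) yields $\|\nabla f\|\geq c'>0$ throughout the slab, so the flow of $-\nabla f/\|\nabla f\|^2$ produces a diffeomorphism $f^{c-\epsilon,c+\epsilon}\cong f^{-1}(c)\times(c-\epsilon,c+\epsilon)$ commuting with $f$, i.e.\ a local trivialization of $f$. The main obstacle, and what genuinely distinguishes this from the compact or proper case, is establishing and then exploiting the uniform lower bound on $\|\nabla f\|$ off the critical locus: this step requires Condition (C) to rule out the pathology that $\|\nabla f\|$ accumulates to zero along an escape-to-infinity sequence, and it requires completeness of $M$ to ensure that the resulting bounded modified-gradient vector field has a flow defined for the full time interval used in parts (2) and (3).
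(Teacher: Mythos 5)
Your overall strategy --- use Condition (C) to force a uniform lower bound on $\|\nabla f\|$ away from the critical set, then run a normalized gradient flow whose long-time existence is guaranteed by that bound together with completeness of $M$ (Hopf--Rinow) --- is essentially the mechanism of the paper: the paper proves (3) by rescaling the \emph{metric} so that the gradient has unit norm on a critical-point-free slab, while you rescale the \emph{vector field} to $-\nabla f/\|\nabla f\|^2$, and it defers (1) and (2) to \cite{PS64} and ``standard Morse theory arguments,'' which your part (2) fleshes out; in particular, your deduction of the gradient bound from (C) using a sequence lying outside the Morse charts is a correct use of the condition.

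However, your proof of (1) has a genuine gap. Taking $S=\{p_n\}$ to be the set of critical points themselves makes Condition (C) vacuous: the $p_n$ are critical and lie in $\overline{S}$, so (C) is automatically satisfied and yields no information --- in particular it does not exclude the scenario in which the $p_n$ form an infinite, closed, discrete set escaping to infinity, which is exactly the case you must rule out. The sentence ``an infinite set of critical points cannot be closed and discrete without contradicting (C) applied to suitable subsequences'' is therefore unjustified as stated, since (C) applied to any set of critical points is satisfied trivially. The standard repair is to test (C) on nearby \emph{noncritical} points: choose $x_n\neq p_n$ with $d(x_n,p_n)<1/n$, $\|\nabla f(x_n)\|<1/n$ and $|f(x_n)-f(p_n)|<1$. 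Then $|f|$ is bounded on $\{x_n\}$ and $\|\nabla f\|$ is not bounded away from zero, so (C) produces a critical point $q\in\overline{\{x_n\}}$; as no $x_n$ is critical, $q$ must be an accumulation point of the $x_n$, hence of the $p_n$, making $q$ a non-isolated critical point and contradicting nondegeneracy. With this fix, your parts (2) and (3) (which use (1) to locate the finitely many critical points, respectively to find a critical-point-free slab) go through as sketched, modulo the routine cutoff of $X$ outside a slightly larger slab so that it is globally defined and bounded.
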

\begin{proof}
Let us first discuss (3). Let $\epsilon>0$ be sufficiently small such that $f^{-1}\big([c-\epsilon, c+\epsilon]\big)$ does not contain any critical point of $f$. By the condition on $f$, $||\nabla f||$ is bounded away from zero. Using a partition of unity, we can construct a smooth function on $M$ which is equal to $||\nabla f||$ on $f^{-1}\big([c-\epsilon, c+\epsilon]\big)$ and is bounded away from zero. Dividing the Riemannian metric on $M$ by the above function, we have a new complete Riemannian metric, since the function is bounded away from zero. Moreover, with respect to the new metric, $||\nabla f||=1$ on $f^{-1}\big([c-\epsilon, c+\epsilon]\big)$. Therefore, the gradient flow provides a trivialization of $f$ over $[c-\epsilon, c+\epsilon]$. Thus, $f$ has the structure of a fiber bundle over $(c-\epsilon, c+\epsilon)$. 

Properties (1) and (2) are stated without proof in \cite[Theorem 2, Page 166]{PS64}. Nevertheless, they follow from  standard Morse theory arguments. For instance, it is not hard to give a proof using the above reasoning together with the standard proper Morse theory. 
\end{proof}


\subsection{Circle-valued non-proper Morse theory}

In this section we present a circle-valued version of Theorem \ref{nonproper}.  

Let $f: M\to S^1$ be a circle-valued smooth function on a smooth manifold. Here we consider $S^1=\R/\Z$. For any real numbers $a, b$ with $a<b<a+1$, we define $f^{\overline{a},\overline{b}}:=f^{-1}\big(\overline{[a, b]}\big)$, where $\overline{[a, b]}$ is the image of the closed interval $[a, b]$ in $S^1$. 

\begin{theorem}\label{nonpropercircle}
Let $M$ be a complete Riemannian manifold, and let $f: M\to S^1$ be a circle-valued Morse function satisfying the following: 
\begin{description}
\item[Condition]\label{condition2} If $S$ is a subset of $M$ on which $||\nabla f||$ is not bounded away from zero, then there exists a critical point of $f$ in the closure of $S$. 
\end{description}
Then the following hold:
\begin{enumerate}
\item There exists finitely many critical points of $f$ in $M$;
\item Let $a, b$ be real numbers with $a<b<a+1$ such that $\overline{a}$ and $\overline{b}$ are regular values of $f$. Suppose that there are $r$ critical points of $f$ in $f^{\overline{a}, \overline{b}}$ of index $d_1, \ldots, d_r$, respectively. Then $f^{\overline{a}, \overline{b}}$ has the homotopy type of $f^{-1}(\overline{a})$ with $r$ cells of dimensions $d_1, \ldots, d_r$ attached. 
\item  If $\overline{c}$  is a regular value of $f$, then $f$ has the structure of a fiber bundle in a small neighborhood of $\overline{c}$ . 
\end{enumerate}
\end{theorem}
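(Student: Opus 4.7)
The plan is to closely parallel the proof of Theorem~\ref{nonproper} given above, exploiting the fact that since the target $S^1$ is compact, the hypothesis ``$|f|$ bounded on $S$'' from the real-valued condition is automatic. Thus the circle-valued condition in Theorem~\ref{nonpropercircle} is effectively a Palais--Smale-type condition: any sequence on which $\|\nabla f\|$ tends to zero has a critical point of $f$ in its closure.

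For assertion (3), the argument is essentially the same as for (3) in the proof of Theorem~\ref{nonproper}. I would first verify that $\|\nabla f\|$ is bounded away from zero on $f^{-1}\bigl(\overline{[c-\epsilon, c+\epsilon]}\bigr)$ for sufficiently small $\epsilon>0$: otherwise a sequence $\{x_n\}$ with $f(x_n)\to \bar c$ and $\|\nabla f\|(x_n)\to 0$ would, by the condition, admit a critical point of $f$ in its closure, and by continuity of $f$ this critical point would lie in $f^{-1}(\bar c)$, contradicting regularity of $\bar c$. I would then rescale the Riemannian metric via a partition of unity so that $\|\nabla f\|\equiv 1$ on this arc preimage, and use the rescaled gradient flow to trivialize.

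For (1), suppose for contradiction that $\mathrm{Crit}(f)$ is infinite. Morse non-degeneracy forces critical points to be isolated; moreover, a standard continuity argument rules out accumulation points of $\mathrm{Crit}(f)$ inside $M$ (such a point would either be critical, contradicting isolation, or be a non-critical point where $\|\nabla f\|>0$ yet $\|\nabla f\|$ vanishes arbitrarily close, contradicting continuity). So $\mathrm{Crit}(f)$ is a discrete closed subset. Pick distinct critical points $p_n$ and, for each $n$, a nearby non-critical point $q_n$ with $\|\nabla f\|(q_n)<1/n$, chosen close enough to $p_n$ that $\{q_n\}_{n\geq 1}$ is still a discrete closed subset of $M$ containing no critical point. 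Applying the condition to $S=\{q_n\}$ yields a critical point in $\overline{S}=S$, contradicting $S\cap\mathrm{Crit}(f)=\emptyset$.

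For (2), assertion (1) ensures that $f^{\overline{a},\overline{b}}$ contains only finitely many critical points, so I can choose regular values $a=s_0<s_1<\cdots<s_k=b$ so that each open arc $(\bar s_{j-1},\bar s_j)$ contains at most one critical value. On arcs with no critical value, part (3) together with the compactness of the arc produces a fiber bundle structure on $f^{\overline{s_{j-1}},\overline{s_j}}$, so this preimage deformation retracts onto $f^{-1}(\bar s_{j-1})$. On an arc containing a single critical value $\bar c$ with critical points of Morse indices $d_1,\ldots,d_\ell$, I would combine the fibration structure from (3) on the complement of small Morse coordinate balls around the critical points with the standard local handle attachment inside those balls, then iterate over $j$ to obtain the claimed cell decomposition. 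The main obstacle I anticipate is the flow-time control in this last step: one needs gradient flow trajectories emerging from the boundary of a small coordinate ball to reach $f^{-1}(\bar s_{j-1})$ in finite time. This is precisely what the non-proper condition buys --- it guarantees that $\|\nabla f\|$ is bounded below on the complement of any preassigned neighborhood of $\mathrm{Crit}(f)$ inside $f^{\overline{s_{j-1}},\overline{s_j}}$, so the rescaled flow transits arcs of fixed $f$-length in bounded time, which is exactly what the Morse surgery argument requires.
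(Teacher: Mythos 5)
Your proposal is correct, but it takes a genuinely different route from the paper for parts (1) and (2). The paper never re-proves any Morse theory on $M$ itself: it lifts $f$ to $\widetilde{f}\colon \widetilde{M}\to\R$ on the infinite cyclic cover, checks the real-valued Palais--Smale condition there by lifting $S$ to a fundamental domain, and gets (1) and (3) directly from Theorem \ref{nonproper}; for (2) it performs a cut-and-glue construction, replacing $\widetilde{f}^{-1}\big((-\infty,a)\big)$ by a product $\widetilde{f}^{-1}(a)\times(-\infty,a)$ via the local triviality near level $a$, equipping the resulting manifold $\widetilde{M}'$ with an interpolated complete metric, and then quoting Theorem \ref{nonproper}(2) for $(\widetilde{M}',\widetilde{f}')$, using that $(\widetilde{f}')^{b}\simeq f^{\overline{a},\overline{b}}$ and $(\widetilde{f}')^{a}\simeq f^{-1}(\overline{a})$. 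You instead work directly on $M$: your discreteness argument for (1) is a clean self-contained proof (and is essentially the reason the stated condition forbids infinitely many critical points), and for (2) you re-run the handle-attachment argument band by band, correctly isolating the analytic input --- the condition forces $||\nabla f||$ to be bounded below on the closed band minus any neighborhood of the (finitely many) critical points, so the normalized gradient flow crosses the band in bounded time on a complete manifold. What the paper's route buys is that all the ``standard Morse theory'' (including the cell-attachment statement, which Palais--Smale only state) is invoked once, in the real-valued setting; what your route buys is that no auxiliary manifold or metric must be built, at the price of redoing that standard surgery argument in the non-proper circle-valued setting, which you leave at sketch level comparable to the paper's own. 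One small repair: prove (1) before (3). As written, your argument for (3) allows the critical point supplied by the condition to be one of the points $x_n$ themselves (with value near but not equal to $\overline{c}$), so you need to know first that critical values do not accumulate at $\overline{c}$; since your proof of (1) does not use (3), reordering fixes this immediately.
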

\begin{proof}
Let $\wti{f}: \wti{M}\to \R$ be the lifting of $f: M\to S^1$ to the infinite cyclic cover $\wti{M}$ corresponding to $f$. Then by lifting $S$ to the fundamental domain $\wti{f}^{-1}\big([a, a+1)\big)$, one can easily check that the Palais-Smale condition of Theorem \ref{nonproper} applies to $\wti{f}$. Thus (1) and (3) follow from the corresponding statements of Theorem \ref{nonproper}.

By Theorem \ref{nonproper} (3), there is a diffeomorphism between $\wti{f}^{-1}\big((a-\epsilon, a)\big)$ and $\wti{f}^{-1}(a)\times (a-\epsilon, a)$ for some small $\epsilon>0$. Thus, we can glue $\wti{f}^{-1}(a-\epsilon, +\infty)$ with $\tilde{f}^{-1}(a)\times (-\infty, a)$ via the above diffeomorphism and obtain a new manifold $\wti{M}'$ and a function $\wti{f}': \wti{M}'\to \R$. We can construct a complete Riemannian metric on $\wti{M}'$ as follows. Let $\rho: \R\to \R$ be a smooth non-decreasing function, such that $\rho=0$ on $(-\infty, a-\epsilon]$ and $\rho=1$ on $[a, +\infty)$. Define the metric on $\wti{M}'$ to be 
$$g_{\wti{M}'}=(1-\rho)\cdot g_{\wti{f}^{-1}(a)\times (-\infty, a)}+\rho\cdot g_{\wti{f}^{-1}(a-\epsilon, +\infty)},$$
which is clearly complete. Now, notice that $(\tilde{f}')^{b}$ is homotopy equivalent to $f^{\overline{a}, \overline{b}}$ and $(\tilde{f}')^{a}$ is homotopy equivalent to $f^{-1}(\overline{a})$. Thus (2) follows from Theorem \ref{nonproper} (2) applied to $(\wti{M}', \wti{f}')$. 
\end{proof}


\section{Topology of semi-abelian varieties. Statements of results}\label{mr}

In this section, we introduce the main constructions and results of this note. We defer some of the technical proofs for subsequent sections. We also prove Theorem \ref{ti1} (see Corollary \ref{topologyone}) and Theorem \ref{ti2} (see Corollary \ref{fibrb} and Corollary \ref{fibrbb}).

\subsection{Subvarieties of semi-abelian varieties}
A nice introduction of complex semi-abelian varieties is \cite[Chapter5]{NW14}. 
We recall here the following definition.

\begin{definition}
A commutative complex algebraic group $G$ is called a {\it semi-abelian variety} if there is a short exact sequence of complex algebraic groups
\begin{equation}
1\to T\to G\to A\to 1,
\end{equation}
where $T$ is an affine torus and $A$ is an abelian variety. 
\end{definition}

Let $X$ be an $n$-dimensional smooth connected closed subvariety of a semi-abelian variety $G$. Without loss of generality, we will assume that $X$ contains the origin $e$ of $G$, and we will always take $e$ as the base point of $G$ and $X$, respectively. Let $\Gamma$ be the space of left invariant holomorphic $1$-forms on $G$. For a left invariant $1$-form $\eta\in \Gamma$, we denote its restriction to $X$ by $\eta_X$. Suppose the class $[\Re(\eta)]\in H^1(G; \R)$ is integral, i.e., $[\Re(\eta)]$ is in the image of the natural map  $H^1(G; \Z)\to H^1(G; \R)$. Then $[\Re(\eta_X)]\in H^1(X; \R)$ is also integral. The map $$x\mapsto \int_e^x \Re(\eta_X)$$given by integration along paths  defines a multi-valued map from $X$ to $\R$. Since the ambiguity lies in the subgroup $\Z\subset \R$, we have a well-defined smooth map $$\int \Re(\eta_X): X\to S^1,$$ which we denote by $\Phi_\eta$. Notice that $\Phi_\eta$ is the restriction of $\int\Re(\eta): G\to S^1$, which we denote by $\Phi_{\eta, G}$. Moreover, $\Phi_{\eta, G}$ is a homomorphism of real Lie groups, because $\Re(\eta)$ is a left invariant $1$-form on $G$. 

In the above notations we have the following result, whose proof will be given in Section \ref{PS}.

\begin{theorem}\label{main}
Regarding $\Gamma$ as a complex affine space, there exists a non-empty Zariski subset $U$ of $\Gamma$ such that the following holds: if $\eta\in U$ with $[\Re(\eta)]\in H^1(G; \R)$ integral, then $\Phi_\eta: X\to S^1$ is a circle-valued Morse function which 
satisfies the the circle-valued Palais-Smale condition of Theorem \ref{nonpropercircle}. 
\end{theorem}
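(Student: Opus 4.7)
\emph{Strategy.} Since $\eta$ is holomorphic and $d\Phi_\eta = \Re(\eta_X)$, a point $x\in X$ is a critical point of $\Phi_\eta$ if and only if $\eta_X$ vanishes at $x$. Moreover, at a non-degenerate zero of the holomorphic $1$-form $\eta_X$ (i.e., one where locally $\eta_X = df$ with $f$ having non-degenerate complex Hessian), the real function $\Phi_\eta = \Re(f)$ has a Morse critical point of index exactly $n$. The plan is therefore to identify a non-empty Zariski-open $U\subset \Gamma$ on which two properties hold: (i) every zero of $\eta_X$ is non-degenerate (the Morse condition), and (ii) in a suitable complete Riemannian metric on $X$, the norm $\|\eta_X\|$ is bounded below on the complement of any neighborhood of the zero set of $\eta_X$ (the Palais-Smale condition of Theorem \ref{nonpropercircle}).

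\emph{Morseness by parametric transversality.} I would consider the evaluation map $\mathrm{ev}\colon \Gamma\times X \to T^*X$, $(\eta,x)\mapsto \eta_X|_x$. Since left-invariant holomorphic $1$-forms trivialize $T^*G$, the linear map $\Gamma\to T_x^*G$ is surjective for every $x\in X$, and composing with the surjection $T_x^*G\twoheadrightarrow T_x^*X$ shows that $\mathrm{ev}$ is a submersion. Hence $\mathrm{ev}^{-1}(\mathbf{0}_{T^*X})$ is a smooth submanifold of $\Gamma\times X$ of the expected dimension. Applying the parametric Sard theorem to the projection $\mathrm{ev}^{-1}(\mathbf{0}_{T^*X})\to \Gamma$, which is an algebraic map, one obtains a non-empty Zariski-open subset $U_1\subset \Gamma$ such that for $\eta\in U_1$ the section $\eta_X$ vanishes transversely to the zero section of $T^*X$. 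This is exactly the non-degeneracy of the zeros of $\eta_X$, and so gives the Morse property of $\Phi_\eta$.

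\emph{Palais-Smale via a logarithmic compactification.} Choose a smooth compactification $\bar G$ of $G$ such that $D:=\bar G\setminus G$ is a simple normal crossing divisor (for example, a toric compactification of the torus factor fibered over $A$). Every left-invariant holomorphic $1$-form on $G$ extends to a global section of $\Omega^1_{\bar G}(\log D)$. Choose a resolution $\pi\colon \bar X\to \bar G$ of the closure of $X$ such that $E := \pi^{-1}(D)\cap \bar X$ together with $\bar X\setminus X$ forms a simple normal crossing divisor $E$ on $\bar X$. Then $\pi^*\eta$ is a section of $\Omega^1_{\bar X}(\log E)$. For each irreducible component $E_i\subset E$, the residue along $E_i$ is a $\C$-linear functional on $\Gamma$; I would check, by restricting $\eta$ to a suitable curve in $G$ hitting the corresponding stratum of $\bar G$, that none of these functionals is identically zero. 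Intersecting their non-vanishing loci produces a non-empty Zariski-open $U_2\subset\Gamma$ such that for $\eta\in U_2$ the form $\pi^*\eta$ has non-zero residue along every component of $E$. Set $U := U_1\cap U_2$.

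\emph{Conclusion and main obstacle.} Equip $G$ with a translation-invariant Hermitian metric (so that $\|dz_i/z_i\|\equiv 1$ on the torus factor and the metric on $A$ is the flat Hermitian metric), and equip $X$ with the induced complete Riemannian metric. If $\{x_k\}\subset X$ is a sequence along which $\|\nabla\Phi_\eta\|(x_k) = \|\eta_X(x_k)\| \to 0$, pass to a subsequence converging in $\bar X$ to some $x_\infty$. If $x_\infty\in X$, then by continuity $\eta_X(x_\infty)=0$, providing a critical point in the closure of $\{x_k\}$. If $x_\infty\in E$, then the non-vanishing of the residue of $\pi^*\eta$ at $x_\infty$, together with a local computation in adapted log coordinates, shows that $\|\eta_X\|$ is bounded below by a positive constant on a punctured neighborhood of $x_\infty$ in $X$, contradicting $\|\eta_X(x_k)\|\to 0$. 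I expect the main obstacle to be precisely this boundary analysis: verifying that the residue-non-vanishing condition indeed cuts out a non-empty Zariski-open subset of $\Gamma$, and comparing the intrinsic metric on $X$ with the logarithmic degeneration of $\pi^*\eta$ near $E$ carefully enough to extract a uniform positive lower bound for $\|\eta_X\|$ near each boundary point.
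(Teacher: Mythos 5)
Your proposal is correct in outline, and its first half is essentially the paper's argument: the authors also work with the incidence (degeneracy) locus $Z=\{(x,\eta)\in X\times\Gamma \mid \eta_X \text{ degenerates at } x\}$, observe that $p_1:Z\to X$ is a rank-$(N-n)$ vector bundle so that $\dim Z=N$ and $p_2:Z\to\Gamma$ is generically finite, take $U$ to be the locus where $p_2$ is finite and \'etale, and deduce Morseness of $\Phi_\eta$ from transversality of $Z$ with $X\times\{\eta\}$ plus the holomorphic Morse lemma (they also record the identity $\|\eta_X\|^2_{h_X}=2\|\nabla\Phi_\eta\|^2_{g_X}$, so your $\|\nabla\Phi_\eta\|=\|\eta_X\|$ is off by a harmless $\sqrt2$). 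Where you genuinely diverge is the Palais--Smale verification. The paper never compactifies: by Lemma \ref{42}, for each $x$ with $\|(\eta_X)_x\|=\epsilon$ there is a unique invariant form $\theta$ of norm $\epsilon$ with $\eta-\theta$ degenerating at $x$, so a sequence $x_i$ escaping to infinity with $\|\eta_X(x_i)\|\to 0$ produces points $(x_i,\eta-\theta_i)\in Z$ with $\eta-\theta_i\to\eta$; since $p_2$ is a finite covering over a compact neighborhood $\overline{U}_\eta\subset U$ (Lemma \ref{etale}), these points must accumulate in $X\times\Gamma$, a contradiction. Your route instead compactifies, extends $\eta$ to a closed logarithmic form, and excludes critical sequences at the boundary via non-vanishing residues; this is in the spirit of Huh's argument, which the authors explicitly set out to avoid (they emphasize after Theorem \ref{huhgen} that their proof uses no compactification of $G$ or $X$). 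Your approach can be made to work, but the items you defer are where the content lies: in the semi-abelian case you need an SNC compactification of a resolution of $\overline{X}$; the non-vanishing of each residue functional is best seen not by your ``suitable curve'' test but by noting that, since $X$ is closed in $G$, every boundary component of $\overline{X}$ maps into the fiberwise toric boundary $D$, so the pullback of some fiber coordinate has non-zero order along it (and residues of the closed log form are constants); and the boundary estimate follows by pairing $\pi^*\eta$ against the bounded vector $t_i\partial_{t_i}$ in adapted coordinates, which gives residue$+o(1)$ while that vector stays bounded in the invariant metric, yielding the positive lower bound for $\|\eta_X\|$. In exchange for this bookkeeping, your argument gives finer information at infinity (log extensions and residues, tying into the maximum-likelihood literature), whereas the paper's incidence-variety argument is more elementary, treats the torus and semi-abelian cases uniformly, and the same $U$ on which $p_2$ is a finite covering immediately gives the constancy of the number of critical points used in the proof of Theorem \ref{huhgen}.
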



\br \rm 
The metric on $X$ is the restriction of any left invariant Riemannian metric on $G$. 
Since any left invariant Riemannian metric on a Lie group $G$ is complete, and any closed submanifold of a complete Riemannian manifold is also complete, it follows that $X$ is a complete Riemannian manifold.
Furthermore, since any two left invariant Riemannian metrics $g_1$ and $g_2$ on $G$ are equivalent, i.e., $\frac{1}{M}g_1\leq g_2\leq Mg_1$ for sufficiently large $M$, the statement of Theorem \ref{main} does not depend on the choice of the left invariant metric on $G$. 
\er

\begin{corollary}\label{fiber}
Suppose $\eta\in U$ is chosen as in the Theorem \ref{main} and assume $[\Re(\eta)]\in H^1(G; \R)$ is integral. Let $\wti{X}$ be the infinite cyclic cover of $X$ with respect to the map $\Phi_\eta: X\to S^1$, and let $\wti{\Phi}_\eta: \wti{X}\to \R$ be the lifting of $\Phi_\eta: X\to S^1$. Then for a regular value $a$ of $\wti\Phi_\eta$, $\wti{X}$ has the homotopy type of $\wti{\Phi}_\eta^{-1}(a)$ with possibly infinitely many $n$-cells attached. 
\end{corollary}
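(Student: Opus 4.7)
The plan is to apply the non-proper Morse theory of Theorem \ref{nonproper} to the lifted function $\wti{\Phi}_\eta: \wti{X}\to \R$ and to show that all of its Morse indices equal $n$; the corollary then follows by assembling $\wti{X}$ from the regular fiber $\wti{\Phi}_\eta^{-1}(a)$ using standard Morse-theoretic gluing.

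First I would equip $\wti{X}$ with the pullback of a left invariant Riemannian metric on $G$; this metric is complete because $\wti{X}\to X$ is a Riemannian covering of the complete manifold $X$. To verify the real-valued Palais-Smale condition of Theorem \ref{nonproper} for $\wti{\Phi}_\eta$, suppose $|\wti{\Phi}_\eta|$ is bounded by some $M$ on $S\subset \wti{X}$ while $\|\nabla\wti{\Phi}_\eta\|$ is not bounded away from zero on $S$. Then $S$ is contained in finitely many deck translates of a fundamental domain, and its projection $\pi(S)\subset X$ satisfies the circle-valued Palais-Smale hypothesis, since $\pi$ is a local isometry and $\nabla\Phi_\eta$ pulls back to $\nabla\wti{\Phi}_\eta$. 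Theorem \ref{main} then produces a critical point $q$ of $\Phi_\eta$ in $\overline{\pi(S)}$; after choosing lifts of an approximating sequence in $\pi(S)$ and extracting a subsequence whose $\wti{\Phi}_\eta$-values converge in the bounded interval $[-M,M]$, the corresponding lift $\wti{q}$ of $q$ is found in $\overline{S}$.

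The main obstacle — and the essential use of complex geometry — is to identify the Morse index at each critical point. Since $G$ is an abelian complex Lie group, every left invariant holomorphic $1$-form is closed, so in a neighborhood of any point $x\in X$ one may write $\eta_X = dh$ for a locally defined holomorphic function $h$ on $X$, and by construction $\Phi_\eta \equiv \Re(h) \pmod{\Z}$ there. At a critical point $x$ of $\Phi_\eta$ the differential $dh$ vanishes, and a short computation (diagonalizing the complex symmetric Hessian of $h$ by a complex linear change of coordinates on $T_xX$) shows that the real Hessian of $\Re(h)$ is non-degenerate if and only if the complex Hessian of $h$ is non-degenerate — precisely the content of the Morse condition supplied by Theorem \ref{main}. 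The holomorphic Morse lemma then furnishes local holomorphic coordinates $z_1,\ldots,z_n$ in which $h = h(x) + z_1^2+\cdots+z_n^2$, so that $\Re(h) - \Re(h(x)) = \sum_{i=1}^n (x_i^2 - y_i^2)$ has Morse index exactly $n$.

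Finally, I would assemble $\wti{X}$ from $F = \wti{\Phi}_\eta^{-1}(a)$ in two directions. Applying Theorem \ref{nonproper}(2) to $\wti{\Phi}_\eta$ along an increasing sequence of regular values $b_i \to +\infty$ (together with the fiber bundle structure of Theorem \ref{nonproper}(3) near $a$) shows that $\wti{\Phi}_\eta^{-1}([a,+\infty))$ is homotopy equivalent to $F$ with one $n$-cell attached for each critical point above $a$. The same argument applied to $-\wti{\Phi}_\eta$, whose Morse indices at the same critical points are $2n - n = n$, gives the analogous description of $\wti{\Phi}_\eta^{-1}((-\infty,a])$. Gluing the two halves along $F$ exhibits $\wti{X}$ up to homotopy as $F$ with possibly infinitely many $n$-cells attached, as required.
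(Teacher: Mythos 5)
Your argument is correct and follows essentially the same route as the paper: the index computation via the holomorphic Morse lemma is exactly the paper's, and your lifting of the Palais--Smale condition to $\wti{X}$ and the two-sided assembly from the fiber just unwind the proof of Theorem \ref{nonpropercircle}, which the paper cites directly together with Theorem \ref{main}.
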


\begin{proof}
Since $\eta$ is a holomorphic 1-form, locally the Morse function $\Phi_\eta$ is equal to the real part of a holomorphic Morse function. By the holomorphic Morse lemma (see, e.g., \cite[Section 2.1.2]{Vo03}), the index of $\Phi_\eta$ at every critical point is equal to the complex dimension of $X$. The assertion follows now from Theorem \ref{main} and Theorem \ref{nonpropercircle}. 
\end{proof}


In order to apply the topological vanishing results to the semi-abelian setting, we need $\wti{\Phi}_\eta^{-1}(a)$ to have finite homological or finite homotopy type. This is always the case when $G$ is an abelian variety, because then $\wti{\Phi}_\eta^{-1}(a)$ is a compact real analytic space. However, when $G$ is not compact, $\wti{\Phi}_\eta^{-1}(a)$ may not have finite homotopy type. For example, when $X=G=\C^*$ with coordinate $z$, and $\eta=2\pi\sqrt{-1}dz/z$, $\Phi_\eta: \C^*\to S^1$ is the composition of the following two maps
$$\C^*\to\R\to S^1,$$
where the first map is defined by $z\mapsto \log|z|$ and the second is taking the quotient by $\Z$. Clearly, the fibers of $\Phi_\eta: \C^*\to S^1$ have infinitely many connected components, so the same holds for $\wti{\Phi}_\eta$. 

\medskip

Let $T$ be the affine torus in the semi-abelian variety $G$ such that the quotient $G/T$ is an abelian variety. 
Denote the restriction of $\eta$ to $T$ by $\eta_T$. We have the following result whose proof will be given in Section \ref{mp}.

\begin{theorem}\label{finite}
Let $\Phi_\eta: X\to S^1$ be the map defined by $\eta\in \Gamma$ (not necessarily in $U$) with $[\Re(\eta)]\in H^1(G; \R)$ integral. If, furthermore, the cohomology class $[\eta_T]\in H^1(T; \C)$ is real, i.e., in the image of $H^1(T; \R)\to H^1(T; \C)$, then for any $\overline{a}\in S^1$, $\Phi_\eta^{-1}(\overline{a})$ has finite homotopy type. 
\end{theorem}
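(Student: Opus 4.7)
My plan is to first extract the geometric content of the hypothesis. Writing $T\cong (\C^*)^m$ in standard coordinates $z_j = e^{s_j+i\theta_j}$, the left-invariant holomorphic $1$-forms on $T$ have basis $dz_j/z_j$. The assumption that $[\eta_T]$ is real, combined with the integrality of $[\Re(\eta)]$, forces $\eta_T = \frac{1}{2\pi i}\sum_j n_j\,dz_j/z_j$ with $n_j\in\Z$; a short computation then gives $\Re(\eta_T) = \sum_j \frac{n_j}{2\pi}\,d\theta_j$, which vanishes on the tangent space of the real vector subgroup $V:=(\R_{>0})^m\subset T\subset G$. Since $\Re(\eta)$ is left-invariant and $V$ is a connected Lie subgroup, this means $V\subset \ker\Phi_{\eta,G}$, so $\Phi_\eta$ descends to a real Lie group homomorphism $\bar\Phi\colon K := G/V \to S^1$. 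The quotient $K$ is an extension of the abelian variety $A$ by $T/V\cong U(1)^m$, hence a \emph{compact} real Lie group; since $\bar\Phi$ is automatically a submersion onto its image, $M:=\bar\Phi^{-1}(\bar a)$ is a smooth compact codimension-$1$ submanifold of $K$.

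Next, I would introduce a canonical proper projection. Choosing any real Lie splitting $G\cong V\times K$, the fiber decomposes as $\Phi_\eta^{-1}(\bar a)=V\times M$. The accompanying projection $\rho\colon G\to V$ is described intrinsically by $\rho(g) = (\log|w_1(g)|,\ldots,\log|w_m(g)|)$, where each $|w_j|\colon G\to\R_{>0}$ is the unique continuous extension of the modulus of the $j$th coordinate character on $T$; existence and uniqueness follow from the canonical isomorphisms $\Hom_{\mathrm{cont}}(G,\R_{>0})\cong V^*\cong \Hom_{\mathrm{cont}}(T,\R_{>0})$, which hold because any homomorphism into $\R_{>0}$ must factor through the maximal-compact quotient. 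Since the fibers of $\rho$ are compact cosets of $K$ and $X$ is closed in $G$, the restriction $\rho|_X\colon X\to V$ is a proper real-analytic map; in particular $\rho|_F\colon F\to V$ is proper, where $F:=\Phi_\eta^{-1}(\bar a)\cap X = X\cap(V\times M)$.

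To conclude finite homotopy type, I would apply proper Morse theory via a generic linear composite. Fix a generic real linear form $\ell\colon V\to\R$ and set $f:=\ell\circ\rho\colon G\to\R$; a direct calculation shows that $df=\Re(\eta')$ for the left-invariant holomorphic $1$-form $\eta':=\sum_j\ell_j\,\eta_j\in\Gamma$, where $\eta_j\in\Gamma$ is any lift of $dz_j/z_j$ from $\Gamma_T$, and the exactness of $df$ means $f$ is globally well-defined on $G$. The restriction $f|_X$ is proper (since $\rho|_X$ is), and by the transversality argument underlying Theorem~\ref{main} applied to the real-valued analogue of $\Phi_{\eta'}$, for generic $\ell$ the function $f|_X$ is Morse with only finitely many critical points and satisfies the Palais-Smale condition. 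The proper case of Theorem~\ref{nonproper} then exhibits $X$ as a finite CW complex. Since $F\subset X$ is cut out of $X$ by the additional real-analytic equation $\Phi_\eta=\bar a$ and $V\times M$ is smooth, a stratified Morse argument --- or equivalently Hardt's triangulation applied to the proper subanalytic map $(\rho,\Phi_\eta)\colon X\to V\times S^1$ --- transfers the finite CW structure from $X$ to $F$.

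The main obstacle lies in controlling the (finitely many) critical values of $\Phi_\eta|_X$ at which $F$ may acquire singularities from a non-transverse intersection of $X$ with $V\times M$. Since $\bar\Phi$ itself has no singular values, the only issue is the transversality of $X\subset G$ with the smooth hypersurface $V\times M$, and the proper map $(\rho,\Phi_\eta)\colon X\to V\times S^1$ combined with a Whitney stratification of the analytic variety $X$ ensures that the critical-value set in $S^1$ is finite. For those exceptional $\bar a$, I would invoke Thom's first isotopy lemma over the complement of this finite set to identify $F_{\bar a}$ up to homeomorphism with a nearby smooth fiber $F_{\bar a'}$, transferring the finite-CW conclusion established for generic fibers to the singular ones.
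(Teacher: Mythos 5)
Your first paragraph (the hypothesis forces $\Phi_{\eta,G}$ to kill $V=(\R_{>0})^m$, so it descends to the compact quotient $K=G/V$, and $\rho|_X$, $\rho|_F$ are proper) is sound and matches the paper's Lemma \ref{equivalent} and the splitting $G_\R\cong\R^m\oplus(S^1)^{2N-m}$. The trouble begins with the Morse-theoretic middle step. The form $\eta'$ with $\Re(\eta')=df$ for $f=\ell\circ\rho$ is forced to lie in the real $m$-dimensional subspace $\{\eta'\in\Gamma:\ [\Re(\eta')]=0\in H^1(G;\R)\}$ (single-valuedness of $f$), whereas the genericity of Theorem \ref{main} is a Zariski-open condition in the full complex vector space $\Gamma$. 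When $\dim T<\dim G$ this real subspace spans a proper complex subspace of $\Gamma$ and may miss $U$ entirely; for instance if $X$ is contained in a translate of an abelian subvariety, every such $\eta'$ restricts to the zero form on $X$ and $f|_X$ is constant, not Morse. Nothing in the transversality argument of Section \ref{PS} gives genericity inside this thin real family, and indeed this is exactly why the paper proves the real-valued Palais--Smale statement (Theorem \ref{veryaffine}) only for $G$ an affine torus, where the relevant real subspace is Zariski-dense in $\Gamma$. (This particular step is not fatal by itself, since ``$X$ has finite homotopy type'' holds anyway because $X$ is a smooth quasi-projective variety.)

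The genuine gap is the final transfer from $X$ to $F=\Phi_\eta^{-1}(\overline{a})\cap X$. Knowing that $F$ is a closed real-analytic subset of a finite CW complex, or even that $F$ is real-analytic and proper over $V\cong\R^m$, does not imply finite homotopy type: the union of the $x$-axis with the graph of $\sin x$ in $\R^2$ is closed, real-analytic, proper over the $x$-axis, and has infinitely generated $H_1$. For the same reason, Hardt triviality or Thom's isotopy lemma for the proper map $(\rho,\Phi_\eta)\colon X\to V\times S^1$ only produces a \emph{locally finite} partition of the non-compact base, so it yields neither finiteness of the ``critical value'' set of $\Phi_\eta|_X$ (recall $\eta$ is not assumed to lie in $U$, so its degeneracy locus on $X$ can be positive-dimensional and non-compact) nor finite triangulability of a fixed non-compact fiber. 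Your use of the hypothesis does exclude the paper's example $X=G=\C^*$, $\eta=2\pi\sqrt{-1}\,dz/z$ (infinitely many fiber components in the compact directions), but it does not exclude infinitely much topology escaping to infinity along $V$. Ruling that out is the actual content of the theorem, and it is what the paper's proof supplies: a real algebraic fiberwise $(S^1)^m$-compactification $\overline{G}'_\R$ of $G_\R$ built from the real-algebraic sequence $1\to(\R^*)^m\to G_\R\to H\to 1$, the extension of $2\Phi_{\eta,G}$ (the doubling is needed because $\pi_1(G_\R)\to\pi_1(H)$ has $2$-torsion cokernel) to a real-analytic map on $\overline{G}'_\R$, and the identification of $\Phi_\eta^{-1}(\overline{a})$ with a union of connected components of a real-analytically constructible subset of the compact manifold $\overline{G}'_\R$, to which Goresky--MacPherson stratified Morse theory applies. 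Your proposal has no analogue of this compactification, so the tameness of $F$ at infinity --- the heart of the statement --- is never established.
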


When $\dim T>0$, the real linear map $$\Gamma\lra H^1(G; \R)$$ given by $\eta\mapsto [\Re(\eta)]$ is surjective but not injective. In other words, given a group homomorphism $\xi: \pi_1(G)\to \Z$, there may exist many $\eta\in \Gamma$ such that the homomorphism induced by $\Phi_\eta: G\to S^1$ on fundamental groups coincides with $\xi$. However, if we restrict the above map to $\{\eta\in \Gamma| \ [\eta_T]\in H^1(T; \C) \text{ is real}\}$, then such $\eta$ is unique. More precisely, we have the following result, whose proof will be given in Section \ref{mp}. 

\begin{proposition}\label{representative}
For any class $a\in H^1(G; \R)$, there exists a unique $\eta\in \Gamma$ such that $[\Re(\eta)]=a\in H^1(G; \R)$ and $[\eta_T]\in H^1(T; \C)$ is real, i.e., in the image of the natural map $H^1(T; \R)\to H^1(T; \C)$. 
\end{proposition}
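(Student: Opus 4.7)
The plan is to reduce the proposition to the two extremal cases---abelian varieties and algebraic tori---via the defining short exact sequence $1 \to T \to G \to A \to 1$. Identifying $\Gamma$ with the complex dual of $\operatorname{Lie}(G)$, the Lie algebra extension dualizes to a short exact sequence of invariant holomorphic $1$-forms
\[0 \to \Gamma(A) \to \Gamma \to \Gamma(T) \to 0,\]
where the first arrow is pullback along $G\to A$ and the second is restriction along $T\hookrightarrow G$. On the other hand, $G$ is homotopy equivalent to its maximal compact subgroup $K(G)$, itself a real torus fitting in an extension $1\to (S^1)^m\to K(G)\to A\to 1$; dualizing the corresponding abelian Lie algebra extension yields the parallel short exact sequence
\[0 \to H^1(A;\R) \to H^1(G;\R) \to H^1(T;\R) \to 0,\]
and the real-part map $\eta\mapsto [\Re(\eta)]$ is a morphism between these two sequences.

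Next I would handle the two extremal cases. For the abelian variety, the Hodge decomposition $H^1(A;\C) = H^{1,0}(A)\oplus H^{0,1}(A)$ together with $\Gamma(A)=H^{1,0}(A)$ shows that every real class $a\in H^1(A;\R)$ decomposes uniquely as $\eta + \bar\eta$ with $\eta\in \Gamma(A)$, so $\eta\mapsto [\Re(\eta)]$ is an $\R$-linear isomorphism $\Gamma(A)\xrightarrow{\sim} H^1(A;\R)$. For the torus, using the basis $\{dz_j/z_j = d\log|z_j| + i\,d\arg z_j\}$ of $\Gamma(T)$, for $\eta_T=\sum_j c_j\,dz_j/z_j$ one computes $[\eta_T]=i\sum_j c_j\,[d\arg z_j]\in H^1(T;\C)$, which is real iff every $c_j$ is purely imaginary. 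The resulting real $m$-dimensional subspace of $\Gamma(T)$ maps isomorphically onto $H^1(T;\R)$ under $\eta_T \mapsto [\Re(\eta_T)]=-\sum_j(\Im c_j)\,[d\arg z_j]$.

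Let $\Gamma^{\mathrm{real}}\subset \Gamma$ denote the preimage under $\Gamma\to\Gamma(T)$ of this $m$-dimensional subspace; it is a real subspace of dimension $2g+m=\dim_\R H^1(G;\R)$, where $g=\dim A$. The proposition is then the assertion that $[\Re(\cdot)]\colon \Gamma^{\mathrm{real}}\to H^1(G;\R)$ is a bijection. I would prove injectivity by a diagram chase: if $\eta\in \Gamma^{\mathrm{real}}$ satisfies $[\Re(\eta)]=0$, restricting to $T$ and applying the toric case gives $\eta_T=0$, so $\eta$ lies in the image of $\Gamma(A)\to \Gamma$; injectivity of $H^1(A;\R)\hookrightarrow H^1(G;\R)$ together with the abelian-variety isomorphism then forces $\eta=0$. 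Surjectivity follows by dimension count, or equivalently by the symmetric lifting argument (restrict a given $a\in H^1(G;\R)$ to $H^1(T;\R)$, lift back to $\Gamma^{\mathrm{real}}$ via the toric case, and correct the discrepancy inside $\Gamma(A)$ via the abelian-variety case).

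The main subtlety I expect is verifying the cohomological exact sequence, specifically the surjectivity of $H^1(G;\R)\to H^1(T;\R)$, which a priori could be obstructed by a Leray $d_2$-differential coming from the Chern class of the principal $T$-bundle $G\to A$. The cleanest way around this is to pass to the maximal compact torus $K(G)$ and dualize the abelian Lie algebra extension, bypassing any need to analyze the classifying map of the bundle.
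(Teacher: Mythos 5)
Your proof is correct, but it follows a genuinely different route from the paper's. The paper does not reduce to the abelian and toric cases at all: it first proves a small lemma giving a canonical splitting of real Lie groups $G_\R\cong \R^m\oplus (S^1)^{2N-m}$ (coming from the lattice $\Lambda\subset\C^N$ with $G\cong\C^N/\Lambda$), then works entirely with the space $H$ of left invariant real $1$-forms on $G_\R$: the real-part map $\Re:\Gamma\to H$ is an $\R$-linear isomorphism (inverse $\xi\mapsto\xi+J\xi$), the class map $H\to H^1(G;\R)$ kills exactly the $\R^m$-directions and is an isomorphism on the $(S^1)^{2N-m}$-directions, and a polar-coordinate computation (their Lemma on equivalent conditions) identifies the condition ``$[\eta_T]$ is real'' with $\Re(\eta)\in H_{(S^1)^{2N-m}}$; the proposition is then immediate. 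Your approach instead dualizes the Lie algebra extension to get $0\to\Gamma(A)\to\Gamma\to\Gamma(T)\to 0$, treats the two extremal cases separately (Hodge decomposition $H^1(A;\C)=H^{1,0}\oplus H^{0,1}$ with $\Gamma(A)=H^{1,0}$ for the abelian part, the explicit basis $dz_j/z_j$ for the toric part), and concludes by a diagram chase for injectivity plus a dimension count (or lifting argument) for surjectivity; your worry about the $H^1$-sequence is legitimately circumvented via the maximal compact subgroup, and in fact the dimension-count version only needs injectivity of $H^1(A;\R)\to H^1(G;\R)$, which is immediate from surjectivity of $\pi_1(G)\to\pi_1(A)$. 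What the paper's method buys is uniformity and brevity: one canonical splitting simultaneously produces the distinguished real subspace of $\Gamma$ and the isomorphism onto $H^1(G;\R)$, with no case analysis and no exact-sequence bookkeeping. What your method buys is a more transparent conceptual explanation of where the two ``halves'' of the statement come from (Hodge theory on $A$ versus the elementary computation on $T$). One cosmetic point: in the abelian case $a=[\eta]+[\bar\eta]=2[\Re(\eta)]$, so the decomposition exhibits $a/2$, not $a$, as $[\Re(\eta)]$; this factor of $2$ is harmless for the claimed isomorphism but should be stated accurately.
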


\begin{corollary}\label{topologyone}
Let $X$ be an $n$-dimensional closed connected smooth subvariety of a complex semi-abelian variety $G$. For a generic group homomorphism $\xi: \pi_1(X)\to \Z$, the corresponding infinite cyclic cover $X^\xi$ is homotopy equivalent to a finite CW-complex with possibly infinitely many $n$-cells attached. Moreover, if $\xi$ is surjective, then $X^\xi$ is connected.
\end{corollary}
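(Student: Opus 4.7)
The plan is to realize a generic homomorphism $\xi:\pi_1(X)\to \Z$ as the map induced on fundamental groups by some $\Phi_\eta:X\to S^1$ with $\eta$ in the Zariski open subset $U\subset \Gamma$ of Theorem \ref{main}, and then to combine Corollary \ref{fiber} with the finiteness of Theorem \ref{finite}.

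First, I would use Proposition \ref{representative} to single out, for each class $a\in H^1(G;\R)$, the unique $1$-form $\eta_a\in\Gamma$ with $[\Re(\eta_a)]=a$ and $[(\eta_a)_T]\in H^1(T;\C)$ real. This yields an $\R$-linear section $s: H^1(G;\R)\to \Gamma$ of the map $\eta\mapsto [\Re(\eta)]$. When $a=\chi$ lies in the lattice $H^1(G;\Z)$, the real Lie group homomorphism $\Phi_{\eta_\chi}:G\to S^1$ induces $\chi$ on $\pi_1$, and its restriction to $X$ induces $\xi_\chi=\iota^*\chi:\pi_1(X)\to\Z$, where $\iota:X\hookrightarrow G$ denotes the inclusion. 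The natural parameter space for the corollary is thus the lattice $H^1(G;\Z)$, and the ``generic'' $\xi$ of the statement should be read as $\xi_\chi$ for generic $\chi$ in this lattice.

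Next, Theorem \ref{main} provides a non-empty Zariski open set $U\subset \Gamma$ on which $\Phi_\eta$ is a Morse function satisfying the Palais--Smale condition of Theorem \ref{nonpropercircle}, whenever $[\Re(\eta)]$ is integral. The preimage $s^{-1}(U)$ is the complement in $H^1(G;\R)$ of a proper real algebraic subvariety, hence contains all but a thin subset of the lattice $H^1(G;\Z)$. For such generic $\chi$, Corollary \ref{fiber} shows that the infinite cyclic cover $X^{\xi_\chi}=\wti X$ is homotopy equivalent to a regular fiber $\wti{\Phi}_{\eta_\chi}^{-1}(a)$ of the lifted map, with possibly infinitely many $n$-cells attached. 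Since $[(\eta_\chi)_T]$ is real by construction, Theorem \ref{finite} ensures that the fiber $\Phi_{\eta_\chi}^{-1}(\overline{a})$ has finite homotopy type; the regular fiber of $\wti\Phi_{\eta_\chi}$ is diffeomorphic to this fiber, so $X^\xi$ is indeed homotopy equivalent to a finite CW complex with (possibly infinitely many) $n$-cells attached. The connectedness assertion is immediate: $X^\xi$ always has exactly $[\Z:\im(\xi)]$ connected components, which equals $1$ precisely when $\xi$ is surjective.

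The main obstacle will be the genericity bookkeeping: one must verify carefully that the Zariski-open condition $\eta_\chi\in U$ cut out by Theorem \ref{main} really does correspond to a generic condition on $\xi$ in the sense of the corollary statement, including the identification of the relevant parameter space with $H^1(G;\Z)$ via the pullback $\iota^*$. The remaining ingredients are then direct applications of the results proved or deferred in the preceding sections.
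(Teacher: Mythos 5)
Your overall strategy (realize $\xi$ by a left invariant $1$-form via Proposition \ref{representative}, invoke Theorem \ref{main}, Corollary \ref{fiber} and Theorem \ref{finite}, and get connectedness from the cokernel of $\xi$) is exactly the skeleton of the paper's proof, but there is a genuine gap in your choice of parameter space. You declare that ``the natural parameter space for the corollary is the lattice $H^1(G;\Z)$'' and read the generic $\xi$ as $\iota^*\chi$ for $\chi\in H^1(G;\Z)$. This only produces homomorphisms $\xi:\pi_1(X)\to\Z$ that factor through $\pi_1(G)$, and the image of $\iota^*:H^1(G;\Z)\to \Hom(\pi_1(X),\Z)$ can be a proper subgroup of strictly smaller rank: for instance, a smooth genus-$3$ curve $X$ lying on an abelian surface $G$ has $\Hom(\pi_1(X),\Z)\cong\Z^6$ while $H^1(G;\Z)\cong\Z^4$, so no condition on $\chi\in H^1(G;\Z)$ can capture a generic element of $\Hom(\pi_1(X),\Z)$. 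The corollary (and its later use for Theorem \ref{ti3}, the signed Euler characteristic and generic vanishing) asserts genericity in all of $\Hom(\pi_1(X),\Z)$; what you prove is the weaker, restricted statement analogous to Corollary \ref{fibrb}, where the hypothesis ``$\xi$ factors through $\pi_1(X)\to\pi_1(G)$'' is explicitly imposed.

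The missing idea is the paper's first step: replace $G$ by the generalized Albanese $G'=Alb(X)$. By the universal property, the closed embedding $X\hookrightarrow G$ factors as $X\xrightarrow{a_X}G'\to G$, which forces $a_X$ to be a closed embedding; and since $(a_X)_*:H_1(X;\Z)\to H_1(G';\Z)$ is surjective with torsion kernel, one gets a canonical identification $\Hom(\pi_1(X),\Z)\cong H^1(G';\Z)$, so that \emph{every} homomorphism $\xi:\pi_1(X)\to\Z$ is realized by a unique $1$-form $\eta\in\Gamma'$ with $[\eta_{T'}]$ real. Running your argument with $G'$ in place of $G$ (Theorem \ref{main} applied to $X\subset G'$, then Theorem \ref{nonpropercircle}, Corollary \ref{fiber} and Theorem \ref{finite}) closes the gap and recovers the full statement; the genericity bookkeeping you worry about is then handled exactly as in the paper's remark, by pulling back the Zariski topology from $\Gamma'$ along the injection $\Hom(\pi_1(X),\Z)\to\Gamma'$. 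Your connectedness argument is fine as stated.
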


\br \rm The meaning of ``generic'' in the above result is with respect to some Zariski topology on $\Hom(\pi_1(X), \Z)$. In the proof below, we will construct a complex vector space $\Gamma'$ and an injective map $\Hom(\pi_1(X), \Z)\to \Gamma'$. There is a non-empty Zariski open subset $U'$ of $\Gamma'$ such that the statement holds for all $\xi: \pi_1(X)\to \Z$ whose image in $\Gamma'$ is contained in $U'$. 
\er

\begin{proof}
Let $a_X: X\to Alb(X)$ be the generalized Albanese map. By the universal property of the generalized Albanese map, the closed embedding $X\hookrightarrow G$ factors as $X\xrightarrow[]{a_X} Alb(X)\to G$. Since the composition $X\to G$ is a closed embedding, the first map $a_X: X\to Alb(X)$ must also be a closed embedding. Denote $Alb(X)$ by $G'$ and consider $X$ as a subvariety of $G'$ via the Albanese map. 

By definition, $(a_X)_*: H_1(X; \Z)\to H_1(G'; \Z)$ is surjective, and its kernel is equal to the torsion subgroup of $H_1(X; \Z)$. Thus, there is a canonical bijection between $\Hom(\pi_1(X), \Z)$ and $\Hom(\pi_1(G'), \Z)$. Since $H_1(X; \Z)$ is the abelianization of $\pi_1(X)$, and by the universal coefficient theorem, we have
$$\Hom(\pi_1(G'), \Z)\cong \Hom(H_1(G'; \Z), \Z)\cong H^1(G'; \Z).$$
Let $\Gamma'$ be the space of left invariant holomorphic 1-forms on $G'$. By Proposition \ref{representative}, there is a natural map $H^1(G'; \Z)\to \Gamma'$ defined by $a\mapsto \eta$. Composing the maps $\Hom(\pi_1(X), \Z)\to \Hom(\pi_1(G'), \Z)$, $\Hom(\pi_1(G'), \Z)\cong H^1(G'; \Z)$, and, resp.,  $H^1(G'; \Z)\to \Gamma'$, we obtain a map $\Hom(\pi_1(X), \Z)\to \Gamma'$, which induces an isomorphism between $\Hom(\pi_1(X), \Z)$ and a discrete subgroup of $\Gamma'$. 

Let $U'$ be the subset of $\Gamma'$ chosen as in Theorem \ref{main}. Suppose the element of $\Gamma'$  corresponding to $\xi: \pi_1(X)\to \Z$ is contained in $U'$. By Theorem \ref{main}, Theorem \ref{nonpropercircle} and Theorem \ref{finite}, it follows that $X^\xi$ is homotopy equivalent to a finite CW-complex with possibly infinitely many $n$-cells attached. 

One can easily see that $\pi_0(X^\xi)$ is isomorphic to the cokernel of $\xi: \pi_1(X)\to \Z$. Thus, if $\xi$ is surjective, then $X^\xi$ is connected. 
\end{proof}

\begin{remark}\label{ro}\rm
In the preceding corollary, the Zariski topology on $\Hom(\pi_1(X), \Z)$ is coarser than the standard Zariski topology on $\Hom(\pi_1(X), \Z)\cong \Z^{b_1(X)}$. Evidently, the subset of surjective maps in $\Hom(\pi_1(X), \Z)$ is dense with respect to the standard Zariski topology on $\Hom(\pi_1(X), \Z)\cong \Z^{b_1(X)}$. Thus, the set 
$$\{\xi\in \Hom(\pi_1(X), \Z)| \ \xi \text{ is surjective and generic}\}$$
is non-empty. 
\end{remark}


\subsection{Subvarieties of affine torus}
In the case when $G$ is an affine torus, the topology of smooth subvarieties of $G$ can be made even more explicit.

Under the notations of Theorem \ref{main}, suppose moreover that $G$ is an affine torus. 
Let $\eta\in U$ such that $[\eta]\in H^1(G; \C)$ is integral, i.e., in the image of the natural map $H^1(G; \Z)\to H^1(G; \C)$. Via the isomorphism $\C/\Z\cong \C^*$, the integration along paths starting at $e$ $$x\mapsto \int_e^x\eta$$ defines a map $\int \eta: G\to\C^*$, which is a homomorphism of algebraic groups. Denote the composition of $\int \eta$ and $\log |\cdot|: \C^*\to \R$ by $\Psi_{\eta, G}:=\log |\int \eta|: G\to \R$.  Let $\Psi_\eta$ be the restriction of $\Psi_{\eta, G}$ to $X$. The following result will be proved in Section \ref{PS}.

\begin{theorem}\label{veryaffine}  Suppose $\eta\in U$ is chosen as in the Theorem \ref{main} and assume  $[\eta]\in H^1(G; \C)$ is integral. Then the function $\Psi_\eta: X\to \R$ is a Morse function which satisfies the the Palais-Smale condition of Theorem \ref{nonproper}. 
\end{theorem}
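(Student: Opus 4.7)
The plan is to derive Theorem \ref{veryaffine} from Theorem \ref{main} by viewing $\Psi_\eta$ and $\Phi_\eta$ as, respectively, the imaginary and real parts of the same (locally defined, multi-valued) holomorphic primitive of $\eta_X$. First observe that integrality of $[\eta]\in H^1(G;\C)$ forces the periods of $\Re(\eta)$ to be integers and those of $\Im(\eta)$ to be zero. In particular $[\Re(\eta)]\in H^1(G;\R)$ is integral, so the hypotheses of Theorem \ref{main} are already met for the same $\eta\in U$, and both $\Phi_\eta:X\to S^1$ and $\Psi_\eta:X\to\R$ are well-defined.

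Writing $F=\int\eta:G\to\C^*$ via $\C/\Z\cong\C^*$, $z\mapsto e^{2\pi iz}$, we can locally express $F=e^{2\pi i\tilde F}$ with $\tilde F$ holomorphic and $d\tilde F=\eta$. Then $\Im(\tilde F)$ is globally well-defined on $G$ while $\Re(\tilde F)$ is well-defined modulo $\Z$, and one reads off
\[ \Psi_\eta=-2\pi\,\Im(\tilde F)\big|_X,\qquad \Phi_\eta=\Re(\tilde F)\big|_X\bmod \Z, \]
whence $d\Psi_\eta=-2\pi\,\Im(\eta_X)$ and $d\Phi_\eta=\Re(\eta_X)$. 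Because $\eta_X$ is $\C$-linear on each complex tangent space of $X$, the vanishings of $\Re(\eta_X)$, of $\Im(\eta_X)$, and of $\eta_X$ at a given point are equivalent. Hence the critical loci of $\Psi_\eta$ and $\Phi_\eta$ coincide with the degeneration locus of $\eta_X$.

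Next, for any holomorphic $1$-form $\alpha$ on a K\"ahler manifold one has the pointwise identity $\|\Re\alpha\|=\|\Im\alpha\|$, a direct calculation in a local complex orthonormal frame. Applied to $\eta_X$ with respect to the restriction of a left-invariant Hermitian metric on $G$, this gives
\[ \|\nabla\Psi_\eta\|\;=\;2\pi\,\|\Im(\eta_X)\|\;=\;2\pi\,\|\Re(\eta_X)\|\;=\;2\pi\,\|\nabla\Phi_\eta\| \]
everywhere on $X$. Consequently $\|\nabla\Psi_\eta\|$ is bounded away from zero on $S\subset X$ if and only if $\|\nabla\Phi_\eta\|$ is. The real-valued Palais-Smale condition of Theorem \ref{nonproper} for $\Psi_\eta$ then follows from the circle-valued version in Theorem \ref{main} applied to $\Phi_\eta$: given any $S$ on which $|\Psi_\eta|$ is bounded and $\|\nabla\Psi_\eta\|$ is not bounded away from zero, there is a critical point of $\Phi_\eta$ in $\ol S$, which is automatically a critical point of $\Psi_\eta$. (The boundedness of $|\Psi_\eta|$ is in fact unused.)

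Finally, at each critical point $x_0$ of $\Psi_\eta$, choose a local holomorphic primitive $\tilde F_0$ of $\eta_X$; then $\Psi_\eta=-2\pi\,\Im(\tilde F_0)+\mathrm{const}$ and $\Phi_\eta=\Re(\tilde F_0)+\mathrm{const}$ near $x_0$. A standard calculation shows that the real Hessian of $\Im(\tilde F_0)$ at $x_0$ is non-degenerate if and only if the complex Hessian of $\tilde F_0$ at $x_0$ is non-degenerate, which in turn is equivalent to the corresponding statement for $\Re(\tilde F_0)$. Since Theorem \ref{main} makes $\Phi_\eta$ Morse at $x_0$, so is $\Psi_\eta$. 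The main obstacle of Theorem \ref{veryaffine} is thus entirely contained in Theorem \ref{main}; the plan above amounts to the observation that $\Psi_\eta$ and $\Phi_\eta$ are ``conjugate harmonic'' functions on $X$ and share critical sets, gradient norms (up to the factor $2\pi$), and non-degeneracy at critical points.
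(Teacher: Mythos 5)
Your proof is correct, and it rests on exactly the two facts the paper uses: the identification of $\Psi_\eta$, up to a factor of $2\pi$ (and a sign depending on the convention for $\C/\Z\cong\C^*$; the discrepancy with the paper's displayed formula is immaterial), with $\int\Im(\eta_X)$, and the pointwise equality $||\Re(\eta_X)||=||\Im(\eta_X)||$ for a holomorphic $1$-form, which is the paper's equation (\ref{equalities}). The logical route differs in one place: after establishing $||\nabla\Psi_\eta||=\sqrt{2}\pi||\eta_X||$, the paper goes back inside the proof of Theorem \ref{main}, extracting a sequence $(x_i)$ with $||(\eta_X)_{x_i}||\to 0$ and re-running the compactness argument via Lemma \ref{42} and Lemma \ref{etale}; you instead use Theorem \ref{main} as a black box, observing that $||\nabla\Psi_\eta||=2\pi||\nabla\Phi_\eta||$ and that the two functions share the same critical set (the degeneration locus of $\eta_X$, since $\Re(\eta_X)$, $\Im(\eta_X)$ and $\eta_X$ vanish simultaneously), so any $S$ on which $||\nabla\Psi_\eta||$ is not bounded away from zero produces, by the circle-valued Palais--Smale condition for $\Phi_\eta$, a critical point in $\ol{S}$. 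This formal reduction is cleaner and, as you note, it yields the stronger condition in which boundedness of $|\Psi_\eta|$ is never used --- precisely the point made in the remark following the paper's proof. Your treatment of the Morse condition, via the equivalence of non-degeneracy of the real Hessians of $\Re(\tilde F_0)$ and $\Im(\tilde F_0)$ with that of the complex Hessian, carries the same content as the paper's appeal to regular singularities of $\eta_X$ (transversality of $Z$ with $X\times\{\eta\}$ for $\eta\in U$) together with the holomorphic Morse lemma; both are complete arguments.
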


\begin{corollary}\label{fibr}  Suppose $\eta\in U$ is chosen as in the Theorem \ref{main} and assume $[\eta]\in H^1(G; \C)$ is integral. 
Then for a sufficiently negative real number $a$, $\Psi_\eta^{-1}\big((-\infty, a]\big)$ is a fiber bundle over $S^1$, whose fiber is homotopy equivalent to a finite CW-complex.  Moreover, $X$ has the homotopy type of $\Psi_\eta^{-1}\big((-\infty, a]\big)$ with finitely many $n$-cells attached. 
\end{corollary}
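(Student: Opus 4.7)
My plan is as follows. Fix a left-invariant Kähler metric on $G$ (for instance the flat metric $\sum |dz_i/z_i|^2$ when $G = (\C^*)^m$), so that $X$ inherits a Kähler metric; the Palais--Smale statements of Theorem \ref{main} and Theorem \ref{veryaffine} are insensitive to this choice by the remark following Theorem \ref{main}. Since $\chi := \int \eta : G \to \C^*$ is a character and $\chi|_X$ is holomorphic, one has locally $\Psi_\eta = \Re(\log \chi|_X)$ and $2\pi\Phi_\eta = \Im(\log \chi|_X)$, so the Kähler relation $\nabla \Im f = J\,\nabla \Re f$ for holomorphic $f$ forces the two real gradients $\nabla \Psi_\eta$ and $\nabla \Phi_\eta$ to be orthogonal, of proportional norm, and each tangent to the level sets of the other at every non-critical point of $\chi|_X$. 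In particular the critical sets of $\Psi_\eta$, $\Phi_\eta$ and $\chi|_X$ coincide; Theorem \ref{nonpropercircle}(1) applied to $\Phi_\eta$ (which satisfies the circle-valued Palais--Smale condition by Theorem \ref{main}, since $[\eta]$ integral forces $[\Re\eta]$ integral) shows this common critical set is finite, and the holomorphic Morse lemma applied to $\log\chi|_X$, as in the proof of Corollary \ref{fiber}, shows each critical point has Morse index exactly $n$. I then fix $a \in \R$ strictly below all critical values of $\Psi_\eta$, so that $X_0 := \Psi_\eta^{-1}((-\infty, a])$ contains no critical points.

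The cell-attaching statement then follows from standard Palais--Smale Morse theory. Iterating Theorem \ref{nonproper}(2) across the finitely many critical values shows that $\Psi_\eta^{-1}((-\infty, b])$ is obtained up to homotopy from $X_0$ by attaching one $n$-cell per critical point, for any $b$ above all critical values. To deformation retract $X$ itself onto $\Psi_\eta^{-1}((-\infty, b])$, I use the contrapositive of Palais--Smale on each $\Psi_\eta$-strip above $b$: no critical points lie there, so $\|\nabla \Psi_\eta\|$ is bounded away from zero on each such (bounded) strip, hence the normalized downward flow $-\nabla \Psi_\eta/\|\nabla \Psi_\eta\|^2$ has locally bounded speed and extends to all finite times on the complete manifold $X$, producing the desired deformation retract via $H(x,t) = \varphi_{t\cdot \max(0,\Psi_\eta(x)-b)}(x)$.

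For the fibration assertion I set $F := \Phi_\eta|_{X_0}: X_0 \to S^1$. Since $X_0$ is closed in $X$, its closure in $X$ is itself and contains no critical points of $\Phi_\eta$, so the contrapositive of the circle-valued Palais--Smale condition of Theorem \ref{main} yields $\|\nabla F\|$ bounded away from zero on $X_0$. By the Kähler orthogonality, $\nabla F$ is tangent to every level set of $\Psi_\eta$, hence in particular to $\partial X_0 = \Psi_\eta^{-1}(a)$; the normalized horizontal vector field $V := \nabla F/\|\nabla F\|^2$ is therefore tangent to $X_0$, preserves $\Psi_\eta$-levels, has bounded norm, and defines a global flow on $X_0$. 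Since $dF(V) = 1$, the time-$1$ flow $\varphi_1$ sends each fiber of $F$ to itself, and $X_0$ is exhibited as the mapping torus of $\varphi_1|_{F^{-1}(\bar\theta_0)}$, which makes $F$ a locally trivial fiber bundle over $S^1$. To identify the fiber up to homotopy, I fix $\bar\theta \in S^1$; by the same Kähler orthogonality, $\nabla \Psi_\eta$ is tangent to $F^{-1}(\bar\theta) = \chi|_X^{-1}(\{re^{i\theta}: 0 < r \leq e^a\})$, and the Palais--Smale contrapositive (now for $\Psi_\eta$) makes $\Psi_\eta|_{F^{-1}(\bar\theta)}: F^{-1}(\bar\theta) \to (-\infty, a]$ a locally trivial fibration over a contractible base. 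Hence $F^{-1}(\bar\theta)$ deformation-retracts onto $\chi|_X^{-1}(e^{a + i\theta})$, a smooth closed $(n-1)$-dimensional subvariety of $G$ and thus a smooth affine variety, which has the homotopy type of a finite CW-complex by Andreotti--Frankel. The main obstacle I expect is executing the flow-based fiber-bundle construction on the closed sublevel set $X_0$ (a manifold with boundary rather than an open manifold to which Theorem \ref{nonpropercircle}(3) applies directly); the Kähler orthogonality $\nabla F \perp \nabla \Psi_\eta$ is precisely what makes the relevant vector fields tangent to $\partial X_0$ and preserves the subsets under consideration, so that the Palais--Smale constructions go through cleanly.
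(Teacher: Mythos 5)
Your argument is correct, but it reaches the fibration half of the statement by a genuinely different route than the paper. The paper gets the bundle structure in one stroke from Verdier's generic fibration theorem: $\int\eta_X\colon X\to\C^*$ is an algebraic morphism, hence a locally trivial bundle over a small punctured disc $D_\epsilon(0)\setminus\{0\}$ with algebraic (hence finite homotopy type) fibers, and then ``sufficiently negative $a$'' means $e^a<\epsilon$; the cell-attaching half is done exactly as you do it, via Theorem \ref{veryaffine} and Theorem \ref{nonproper}, using that $\Psi_\eta=\log|\int\eta_X|$ is a real Morse function with finitely many critical points all of index $n$. You instead build the circle fibration by hand inside the Palais--Smale framework: the orthogonality and equality of norms of $\nabla\Phi_\eta$ and $\nabla\Psi_\eta$ (which is really just the Cauchy--Riemann computation behind equation (\ref{equalities}); only $J$-invariance of the metric is needed, not K\"ahlerness) makes the normalized gradient vector fields tangent to each other's level sets and to $\partial X_0$, and boundedness away from zero of $\|\eta_X\|$ on $X_0$ gives complete flows exhibiting $\Phi_\eta|_{X_0}$ as a mapping torus and retracting each fiber onto an algebraic fiber $(\int\eta_X)^{-1}(e^{a+i\theta})$, finite up to homotopy by Andreotti--Frankel. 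What your route buys is self-containedness (no Verdier, consistent with the paper's stated aim of elementary Morse-theoretic proofs) and a sharper quantification: the conclusion holds for every $a$ below all critical values of $\Psi_\eta$, not just below an unspecified Verdier radius; what the paper's route buys is brevity and no need to worry about flows on the manifold-with-boundary $X_0$. One small point you should patch: when you apply the Palais--Smale contrapositive to $\Psi_\eta$ on $F^{-1}(\bar\theta)$, the set is unbounded in $\Psi_\eta$, whereas the condition in Theorem \ref{nonproper}/\ref{veryaffine} is stated for subsets on which $|f|$ is bounded; either invoke the remark at the end of Section \ref{PS} (where it is observed that $\Psi_\eta$ satisfies the stronger condition with no boundedness hypothesis), or argue strip-by-strip and use that a locally trivial bundle over the contractible base $(-\infty,a]$ is trivial. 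With that citation added, your proof is complete.
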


\begin{proof}
The map $\int\eta_X: X\to \C^*$ is a morphism of smooth complex algebraic varieties.  Hence, by the generic fibration theorem (see \cite[Corollary 5.1]{Ve76}), for sufficiently small $\epsilon>0$, the map $\int\eta_X$ is a fiber bundle over the punctured disc $D_\epsilon(0)\setminus \{0\}$, whose fibers are algebraic varieties. Thus, the first part of the corollary follows. 

As a morphism of algebraic varieties, $\int\eta_X: X\to \C^*$ has only finitely many critical values. The critical values of $\Psi_\eta$ are the logarithms of the absolute values of the critical values of $\int\eta_X$. Hence $\Psi_\eta$ has only finitely many critical values. In the proof of Theorem \ref{veryaffine}, we will show that $\int\eta_X: X\to \C^*$ is a holomorphic Morse function. A standard fact is that the absolute value of a non-vanishing holomorphic Morse function on a complex manifold of dimension $n$ is a real Morse function whose index at each critical point is $n$ (e.g., the argument in \cite{KP15}[Lemma 2.1] can be easily adapted to the general case). Since $\log : \R_{>0}\to \R$ is a smooth function with positive derivatives, $\Psi_{\eta}: X\to \R$ is a real-valued Morse function, which has only finitely many critical points, all of which are of index $n$. Thus, the second part of the corollary follows from Theorem \ref{veryaffine} and Theorem \ref{nonproper}. 
\end{proof}

\br\label{rnew}\rm
The fiber in the above statement is connected if the integral cohomology class $\eta$ is not divisible by any integer $>1$. (Indeed, under this assumption, the homomorphism induced by the bundle map on fundamental groups is onto, so the fiber is connected.) As in Remark \ref{ro}, it can be seen that there exists $\eta \in U$ so that $[\eta]$ (when regarded as an integral class) is not divisible by any integer $>1$. 
\er

\br\label{number}\rm
Obviously, the number of $n$-cells attached in the the statement of Corollary \ref{fibr} is equal to the number of critical points of $\Psi_{\eta}$. It follows from the arguments in the proof that the critical points of $\Psi_{\eta}$ are same as the degeneration points of $\eta_X$. Thus, the number of $n$-cells attached is equal to the number of degeneration points of $\eta_X$. 
\er

\begin{corollary}\label{fibrb}
Let $G$ be an affine torus, and let $X\subseteq G$ be a smooth connected closed subvariety of dimension $n$.  Then for any generic homomorphism $\xi: \pi_1(X)\to \Z$, which factors through $\pi_1(X) \to \pi_1(G)$, there exists a continuous map $F_\xi: X_0\to S^1$ defined on a subset $X_0$ of $X$ such that the following properties hold: 
\begin{enumerate}
\item $F_\xi$ is a fibration whose fiber is of finite homotopy type.
\item $X$ is homotopy equivalent to $X_0$ with finitely many $n$-cells attached.
\end{enumerate}  
\end{corollary}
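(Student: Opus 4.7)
The plan is to reduce this statement directly to Corollary \ref{fibr} by translating the condition on $\xi$ into a condition on an integral left-invariant holomorphic $1$-form on the affine torus $G$.

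First I would spell out the dictionary between integral forms in $\Gamma$ and homomorphisms on fundamental groups. For $G=(\C^*)^m$, the space $\Gamma$ has the basis $\{dz_i/z_i\}$, and the map $\eta\mapsto[\eta]\in H^1(G;\C)$ is an isomorphism; hence the set of $\eta\in\Gamma$ with $[\eta]$ integral forms a lattice canonically identified with $H^1(G;\Z)=\Hom(\pi_1(G),\Z)$. Each such integral $\eta$ produces an algebraic group homomorphism $\int\eta\colon G\to\C^*$ which realizes the corresponding element of $\Hom(\pi_1(G),\Z)$ on $\pi_1$. Composing with $X\hookrightarrow G$, the map $\int\eta_X\colon X\to\C^*$ induces on $\pi_1$ precisely the pullback of the corresponding $\xi'\in\Hom(\pi_1(G),\Z)$ along $\iota_*\colon\pi_1(X)\to\pi_1(G)$. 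Since the hypothesis on $\xi$ is exactly that it is such a pullback, every $\xi$ in the statement arises from some integral $\eta\in\Gamma$.

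Second, I would make ``generic'' precise using Theorem \ref{main}: let $U\subset\Gamma$ be the non-empty Zariski open subset supplied there. Since the lattice of integral forms is Zariski dense in $\Gamma\cong\C^m$, it meets $U$ in a Zariski dense subset, and I declare $\xi$ to be \emph{generic} if it is the image, under the pullback map described above, of some integral $\eta\in U$. This coincides in spirit with the meaning of generic recorded in the remark following Corollary \ref{topologyone}.

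Third, for such a $\xi$ with corresponding $\eta\in U$, Theorem \ref{veryaffine} applies and asserts that $\Psi_\eta=\log|\int\eta_X|\colon X\to\R$ is a Morse function satisfying the Palais-Smale condition of Theorem \ref{nonproper}. Corollary \ref{fibr} then immediately produces, for a sufficiently negative regular value $a$, the desired subset $X_0:=\Psi_\eta^{-1}\!\big((-\infty,a]\big)$ together with a locally trivial fiber bundle structure over $S^1$ whose fiber (being a smooth complex algebraic variety, by the generic fibration theorem used in the proof of Corollary \ref{fibr}) has finite homotopy type, and the fact that $X$ is homotopy equivalent to $X_0$ with finitely many $n$-cells attached. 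I would define $F_\xi\colon X_0\to S^1$ to be this bundle projection, realized concretely as the composition of $\int\eta_X|_{X_0}\colon X_0\to\bar D_{e^a}(0)\setminus\{0\}$ with the deformation retract to $S^1=\{|z|=e^a\}$. Both conclusions (1) and (2) then follow at once.

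The only subtle step is pinning down the match between the two notions of ``generic''; once that is done, all the Morse-theoretic content is already supplied by Theorem \ref{veryaffine} and Corollary \ref{fibr}, so Corollary \ref{fibrb} is essentially a repackaging of Corollary \ref{fibr} in terms of the intrinsic datum $\xi$ rather than $\eta$.
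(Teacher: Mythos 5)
Your proposal is correct and follows essentially the same route as the paper: identify the generic $\xi$ factoring through $\pi_1(G)$ with an integral left-invariant form $\eta$ lying in the Zariski open set $U$ of Theorem \ref{main}, set $X_0=\Psi_\eta^{-1}\big((-\infty,a]\big)$ for $a$ sufficiently negative, and invoke Theorem \ref{veryaffine} and Corollary \ref{fibr}. Your $F_\xi$ (radial retraction after $\int\eta_X$) coincides up to an obvious identification with the paper's choice $\arg\circ\int\eta$ restricted to $X_0$, so the argument matches the paper's proof.
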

\begin{proof}
The meaning of ``generic''  in the above statement is with respect to the standard Zariski topology on $\Hom(G, \C^*)\cong \Z^{\dim G}$. 

On the affine torus $G$, the map $\eta\mapsto [\eta]: \Gamma\to H^1(G; \C)$ is an isomorphism. Thus, each class $H^1(G; \Z)$ is represented by a unique left invariant holomorphic $1$-form. 
 By (\ref{canid}), each homomorphism $\xi: \pi_1(X) \to \Z$ corresponds to a unique integral cohomology class in $H^1(X, \Z)$. Choose $\xi$ such that the corresponding cohomology class is $\eta_X$, where $\eta$ is generic as in Theorem \ref{main}. Set $X_0=\Psi_\eta^{-1}\big((-\infty, a]\big)$ for sufficiently negative $a$, and let $F_\xi$ be the restriction of the composition $X\xrightarrow[]{\int \eta}\C^*\xrightarrow[]{\arg}S^1$ to $X_0$. The assertion follows now from the Corollary \ref{fibr}.
\end{proof}

\br\label{rla}\rm
Note that if $\xi$ in Corollary \ref{fibrb} is an epimorphism and $n >1$, the fiber of the fibration $F_{\xi}$ is connected. Indeed, as in the proof of Proposition \ref{pp1}, the homomorphism $\pi_1(X_0) \to \pi_1(X)$ induced by inclusion is in this case onto, so $F_{\xi}$ induces an epimorphism on fundamental groups. When $n=1$, $X$ is homotopy equivalent to a connected $1$-dimensional CW complex (since it is affine).
\er 

\br\rm In the case of a very affine manifold $X \subseteq G$ as in the above corollary, the homomorphism $\pi_1(X) \to \pi_1(G)$ induced by inclusion is non-trivial, provided that $\dim X>0$. In more detail, let $h$ be the restriction to $X$ of the polynomial map $\prod_{i=1}^m z_i : G \to \C^*$, where $(z_1, \cdots, z_m)$ are the coordinates of $G$. Without any loss of generality, we may assume that $h$ is dominant. (If not, the image of $h$ is just one point, say $b\in \C^*$, since $X$ is connected. Then $G_b:=\{\prod_{i=1}^m z_i=b\}$ is a complex  subtorus of $G$, of complex codimension one, which contains $X$. Replace $G$ by $G_b$. After finitely steps of replacing the torus containing $X$ by  codimension-one subtori, the map $h$ becomes dominant.) Then it is easy to see that the homomorphism $h_*:\pi_1(X)\to \pi_1(\C^*)$ induced by $h$ on  fundamental groups is non-trivial, provided that $\dim X>0$. Since, by definition, $h_*$ factors trough $\pi_1(X) \to \pi_1(G)$, it follows that the latter is also a non-trivial homomorphism.
\er

A more intrinsic version of the above Corollary \ref{fibrb} can be given as follows.

\begin{corollary}\label{fibrbb}
Let $X$ be an $n$-dimensional smooth connected closed subvariety of an affine torus $G$. Suppose the mixed Hodge structure of $H^1(X; \Z)$ is pure of type $(1,1)$, or equivalently, there exists a smooth compactification $\bar{X}$ of $X$ such that $b_1(\bar{X})=0$. Then for a generic homomorphism $\xi: \pi_1(X)\to \Z$, there exists a continuous map $F_\xi: X_0\to S^1$ defined on a subset $X_0$ of $X$ such that the following properties hold: 
\begin{enumerate}
\item $F_\xi$ is a fibration whose fiber is of finite homotopy type.
\item $X$ is homotopy equivalent to $X_0$ with finitely many $n$-cells attached.
\end{enumerate}
\end{corollary}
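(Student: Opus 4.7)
The plan is to reduce the statement to Corollary \ref{fibrb} by replacing the ambient torus $G$ with the generalized Albanese variety of $X$. The factorization hypothesis in Corollary \ref{fibrb} asks that $\xi\colon\pi_1(X)\to\Z$ factor through $\pi_1(X)\to\pi_1(G)$; since every $\xi$ automatically factors through the Albanese map, the role of the purity hypothesis is precisely to guarantee that $Alb(X)$ is itself an affine torus rather than a genuine semi-abelian variety, so that Corollary \ref{fibrb} can be applied to the closed embedding $X\hookrightarrow Alb(X)$ in place of $X\hookrightarrow G$.

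The first step is to verify that $Alb(X)$ is an affine torus. For any smooth variety $X$ with smooth compactification $\bar X$, the generalized Albanese fits into a short exact sequence
\begin{equation*}
1 \to T' \to Alb(X) \to Alb(\bar X) \to 1,
\end{equation*}
in which $T'$ is an affine torus whose character lattice is controlled by residues along the boundary divisor $\bar X\setminus X$, and $Alb(\bar X)$ is the classical Albanese variety of $\bar X$, of complex dimension $\tfrac12 b_1(\bar X)$. The assumption $b_1(\bar X)=0$ therefore forces $Alb(\bar X)=0$, so $Alb(X)=T'$ is an affine torus.

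The second step follows the pattern in the proof of Corollary \ref{topologyone}: the closed embedding $X\hookrightarrow G$ factors through the Albanese map as $X\xrightarrow{a_X} T' \to G$, which forces $a_X\colon X\hookrightarrow T'$ to be a closed embedding, and the induced map $(a_X)_*\colon H_1(X;\Z)\to H_1(T';\Z)$ is surjective with torsion kernel. Hence the natural pullback $\Hom(\pi_1(T'),\Z)\to \Hom(\pi_1(X),\Z)$ is a bijection, so every $\xi\colon\pi_1(X)\to\Z$ factors uniquely through $\pi_1(T')$ and the factorization hypothesis of Corollary \ref{fibrb} is automatic after this substitution. Applying that corollary to the closed embedding $X\hookrightarrow T'$ yields, for generic $\xi$ (in the Zariski topology on $\Hom(\pi_1(T'),\Z)\cong\Z^{\dim T'}$, transported to $\Hom(\pi_1(X),\Z)$ via the bijection above), the desired continuous map $F_\xi\colon X_0\to S^1$ with the stated fibration and cell-attachment properties.

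The only new input beyond Corollary \ref{fibrb} is the identification of $Alb(X)$ with an affine torus, which is the main, but essentially formal, obstacle; beyond this, the argument is a mechanical specialization of results already established.
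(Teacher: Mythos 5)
Your proposal is correct and follows essentially the same route as the paper: use the purity hypothesis (equivalently $b_1(\bar X)=0$) to identify $Alb(X)$ with an affine torus, note as in Corollary \ref{topologyone} that the Albanese map is a closed embedding and that every $\xi$ factors through $\pi_1(Alb(X))$, and then apply Corollary \ref{fibrb} with the ambient torus replaced by $Alb(X)$. The extra detail you supply (the exact sequence for the generalized Albanese and the bijection on $\Hom(-,\Z)$) is a correct elaboration of what the paper leaves implicit.
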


\begin{proof}
Note that $\Hom(\pi_1(X), \Z)$ is a free abelian group of rank $b_1(X)$. The meaning of  ``generic'' in the above statement is with respect to the standard Zariski topology on $\Hom(\pi_1(X), \Z)\cong \Z^{b_1(X)}$.

Let $a_X: X\to Alb(X)$ be the generalized Albanese map of $X$. Since $H^1(X; \Z)$ is of type $(1,1)$, $Alb(X)$ is an affine torus. By the same argument as in the proof of Corollary \ref{topologyone}, the Albanese map $a_X: X\to Alb(X)$ is a closed embedding.  Then the conclusion follows from the Corollary \ref{fibrb} with $G$ replaced by $Alb(X)$. 
\end{proof}



\subsection{Proof of Theorem \ref{huhgen}}

\begin{proof}[Proof of Theorem \ref{huhgen}]
The statements (i) and (ii) follow easily from the arguments in the beginning of Section \ref{PS}. 

To prove (iii), let us first assume that $G$ is an affine torus and $[\Re(\eta)]\in H^1(G; \R)$ is integral. Then (iii) follows from Proposition \ref{se}, Corollary \ref{fibr} and Remark \ref{number}. Notice that  the number of critical points is constant on a non-empty Zariski open set of $\Gamma$ (see the arguments in the beginning of Section \ref{PS}). Since the subset $\{\eta\in\Gamma \ | \ [\Re(\eta)]\in H^1(G; \R) \textrm{ is integral}\}$ is clearly Zariski dense in $\Gamma$, the statement (iii) follows assuming $G$ is an affine torus.

When $G$ is a general semi-abelian variety, we can apply a similar argument, but using Corollary \ref{topologyone} and Theorem \ref{finite} instead of Corollary \ref{fibr}, while instead of Remark \ref{number} we need the parallel fact that the number of critical points of $\Psi_\eta$ is equal to the number of degeneration points of $\eta$. 

Indeed, it follows from Corollary \ref{fiber} that  for a regular value $a\in \R$ of $\widetilde{ \Phi}_\eta$, the infinite cyclic cover $\widetilde{X}$ corresponding to $\Phi_{\eta}$ is homotopy equivalent to $\widetilde{ \Phi}_\eta^{-1}(a)$ by adding $n$-cells corresponding to the degeneration points of $\eta_X$. Moreover, we have an isomorphism of $\C[t,t^{-1}]$-modules: \be\label{ls} H_n(\widetilde{X},\widetilde{ \Phi}_\eta^{-1}(a);\C)\cong \C[t,t^{-1}]^{\oplus \ell},\ee where $\ell$ is  the number of degeneration points of $\eta_X$. We also have that $H_i(\widetilde{X},\widetilde{ \Phi}_\eta^{-1}(a);\C)\cong 0$, for any $i \neq n$.

Assume now that  $\eta_X \in H^1(X; \R)$ is integral.
Then the assumptions of Theorem \ref{finite} are automatically satisfied, hence  $\widetilde{ \Phi}_\eta^{-1}(a)$ has finite homotopy type.  Let $\xi $ denote the homomorphism $\xi: \pi_1(X) \to 
\Z$ corresponding to $\eta_X$. Then the corresponding infinite cyclic cover $X^\xi$ coincides with $\widetilde{X}$. By (\ref{ls}) and from the arguments in the proof of Proposition \ref{se}, we get that $H_i(X^\xi ; \C)$ is a torsion $\C[t,t^{-1}]$-module for $i\neq n$, and $$\ell= \rk_{\C[t,t^{-1}]} H_n(X^\xi; \C) = (-1)^n \chi (X).$$

The case  when $\eta_X \in H^1(X; \C)$ is non-integral follows from the same argument as in the affine torus case.
\end{proof}

\begin{remark}\rm
The statement (iii) of Theorem \ref{huhgen} follows essentially from \cite{FK00}. One can also replace the index theorem of \cite{FK00} by the Chern class argument in \cite{Hu} to conclude statement (iii). Nevertheless, both proofs require some sophisticated forms of intersection theory extended to the non-compact setting. In contrast, our proof only uses Morse theory. Especially when $G$ is an affine torus, our proof does not rely on a compactification of $G$ or $X$, and is much more elementary. 



\end{remark}


\section{Semi-abelian varieites. Proof of structure results}\label{mp}

In this section, we review the definition of complex semi-abelian varieties and discuss some compactifications as complex and real algebraic varieties. We will prove Theorem \ref{finite} using a compactification of $G_\R$, the underlying real algebraic variety underlying $G$. We also supply here a proof of Proposition \ref{representative}.

\medskip

Recall that a commutative complex algebraic group $G$ is called a {\it semi-abelian variety} if there is a short exact sequence of complex algebraic groups
\begin{equation}\label{SES}
1\to T\to G\to A\to 1,
\end{equation}
where $T$ is an affine torus and $A$ is an abelian variety. 
Let $m=\dim T$. As a complex Lie group, an $N$-dimensional semi-abelian variety $G$ is isomorphic to $\C^N/\Lambda$, where $\Lambda\subset \C^N$ is a discrete subgroup generating $\C^N$ as a complex vector space. 

\medskip

Even though the following result is not directly applicable to the proof of Theorem \ref{finite}, it motivates our later constructions. 

\begin{lemma}[\cite{FK00}]\label{compactification}
There exists a 
projective compactification $\overline{G}$ of $G$, such that $\overline{G}$ is a $\cp^m$-bundle over $A$, which is constructed as a fiberwise $\cp^m$-compactification of $T\cong (\C^*)^m$. 
\end{lemma}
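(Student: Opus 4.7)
The plan is to construct $\overline{G}$ explicitly as a projective bundle over $A$, starting from the principal $T$-bundle structure encoded in the short exact sequence (\ref{SES}). The key observation is that $G \to A$ is a principal $T$-torsor in the algebraic category, and since $T \cong (\C^*)^m$ splits as a product, this torsor decomposes canonically into $m$ principal $\C^*$-bundles, each equivalent to an algebraic line bundle on $A$.

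More precisely, I would first choose coordinate characters $\chi_1, \ldots, \chi_m \colon T \to \C^*$, which provide a splitting $T \cong \C^* \times \cdots \times \C^*$. Pushing out the principal $T$-bundle $G \to A$ via each $\chi_i$ gives a principal $\C^*$-bundle on $A$, i.e., an algebraic line bundle $L_i \to A$ with its zero section removed. Concretely,
\[
G \;\cong\; (L_1 \setminus 0_A) \times_A \cdots \times_A (L_m \setminus 0_A),
\]
and the fiber of each $L_i \to A$ over $a \in A$ is a copy of $\C$ on which the $i$-th factor of $T$ acts by scaling. Assembling these as a direct sum yields a rank-$m$ algebraic vector bundle
\[
V \;:=\; L_1 \oplus \cdots \oplus L_m \longrightarrow A,
\]
into which $G$ embeds as the open subvariety complementary to the union of the $m$ ``coordinate subbundles'' $\{L_i = 0\}$; the fiberwise picture is the standard inclusion $(\C^*)^m \hookrightarrow \C^m$.

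Next, I would perform the fiberwise $\cp^m$-compactification by setting
\[
\overline{G} \;:=\; \mathbb{P}\bigl(V \oplus \mathcal{O}_A\bigr) \longrightarrow A.
\]
This is a $\cp^m$-bundle over $A$ in the Zariski topology, with the hyperplane at infinity $\mathbb{P}(V) \subset \mathbb{P}(V \oplus \mathcal{O}_A)$ providing the standard fiberwise compactification $\C^m \hookrightarrow \cp^m$. Combined with the previous embedding, this gives a natural open embedding $G \hookrightarrow \overline{G}$, whose fiber over $a \in A$ is exactly $(\C^*)^m \hookrightarrow \cp^m$, as required. Finally, projectivity of $\overline{G}$ follows since $A$ is projective and $\overline{G} \to A$ is a projective morphism (being a projective bundle), so their composition realizes $\overline{G}$ as projective over $\operatorname{Spec}\C$.

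The only genuine point that requires care is the identification of the principal $T$-bundle $G\to A$ with the product-of-$\C^*$-bundles picture, which uses the splitting of $T$ and the equivalence between principal $\C^*$-bundles and line bundles; everything after that is a routine projective-bundle construction. So the ``hard part'' is not really hard here, but it is the step one must not gloss over: checking that the algebraic structure of the semi-abelian variety $G$ is compatible with viewing it fiberwise as $(\C^*)^m$ sitting inside a vector bundle on $A$, so that the fiberwise $\cp^m$-compactification globalizes into an algebraic (and in fact projective) compactification.
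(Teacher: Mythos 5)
Your construction is essentially identical to the paper's: both split $T\cong(\C^*)^m$ to decompose $G$ into a fiber product of principal $\C^*$-bundles over $A$, pass to the associated line bundles $L_1,\dots,L_m$, embed $G$ into their direct sum, and take the fiberwise projective compactification $\mathbb{P}(L_1\oplus\cdots\oplus L_m\oplus \mathcal{O}_A)$, which is projective since $A$ is. The proposal is correct and matches the paper's proof.
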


\begin{proof}
By choosing a splitting $T\cong \C^*\times \cdots \times \C^*\cong (\C^*)^m$, we can write $G$ as
$$G=G_1\times_A G_2\times_A \cdots \times_A G_m$$
where each $G_k$ is an extension of $A$ by $\C^*$. Then each $G_k$ is a principal $\C^*$-bundle over $A$. Let $L_k$ be the flat line bundle on $A$, whose principal $\C^*$-bundle is isomorphic to $G_k$. In other words, $L_k:=(G_k\times \C)/\C^*$, where $\C^*$ acts on $G_k$ by fiberwise multiplication, and $\C^*$ acts on $\C$ by scalar multiplication. There is a canonical open embedding $G_k\to L_k$, mapping $t\in G_k$ to the image of $(t, 1)$ in the quotient $(G_k\times \C)/\C^*=L_k$. 

By construction, each $L_k$ is an algebraic vector bundle on $A$. Thus, the total space of $L_1+ L_2+ \cdots+ L_m$ has a natural algebraic variety structure. Let $\overline{G}$ be the fiberwise projective compactification of $L_1+ L_2+ \cdots+ L_m$. In other words, $\overline{G}$ is the associated projective bundle of $L_1+ L_2+ \cdots+ L_m+\C_A$, where $\C_A$ is the trivial line bundle on $A$. Thus, $\overline{G}$ is a smooth projective variety. Consider $G$ as an open subvariety of $\overline{G}$ by the composition of natural open embeddings
$$G=G_1\times_A G_2\times_A \cdots \times_A G_m\to L_1+ L_2+ \cdots+ L_m\to \overline{G}.$$
The assertions follow immediately from the above construction. 
\end{proof}

Since the affine torus $T\cong (\C^*)^m$ fits naturally into a short exact sequence of real Lie groups,
$$1\to (\R_{>0})^m\to T\to (S^1)^m\to 1,$$
and since $A\cong (S^1)^{2N-2m}$ as real Lie groups, the short exact sequence (\ref{SES}) induces a short exact sequence 
\begin{equation}\label{realliegroups}
1\to \R_{>0}^m\to G_\R\xrightarrow[]{\sigma} (S^1)^{2N-m}\to 1
\end{equation}
of real Lie groups. 
In the notations of Section \ref{mr}, we have the following:

\begin{lemma}\label{equivalent}
Given $\eta\in \Gamma$, suppose that the class $[\Re(\eta)]\in H^1(G; \R)$ is integral. Then the following statements are equivalent:
\begin{enumerate}[(i)]
\item the class $[\eta_T]\in H^1(T, \C)$ is integral;
\item the class $[\eta_T]\in H^1(T, \C)$ is real;
\item the left invariant holomorphic $1$-form $\eta_T$ on $T$ is of the form 
$$\eta_T=(2\pi\sqrt{-1})^{-1}\left(n_1\frac{dz_1}{z_1}+\cdots+n_m\frac{dz_m}{z_m}\right)$$
where $n_i\in \Z$, with respect to a choice of coordinates $z_1, \ldots, z_m$ on $T\cong (\C^*)^m$;
\item the map $\Phi_{\eta, G}=\int\Re(\eta): G\to S^1$ contracts the Lie subgroup $(\R_{>0})^m$ of $G$. In other words, $\Phi_{\eta, G}$ factors through $\sigma: G\to (S^1)^{2N-m}$. 
\end{enumerate}
\end{lemma}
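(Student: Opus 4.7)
The plan is to reduce everything to a coordinate computation on the torus factor $T$. Choose an identification $T\cong (\C^*)^m$ with coordinates $z_1,\dots,z_m$. Since the forms $\frac{dz_j}{z_j}$ give a basis for the space of left invariant holomorphic $1$-forms on $T$, I can write
\[
\eta_T=(2\pi\sqrt{-1})^{-1}\sum_{j=1}^m c_j\frac{dz_j}{z_j},\qquad c_j=a_j+\sqrt{-1}\,b_j\in\C.
\]
In these terms, (iii) is literally the statement that every $c_j$ is an integer, so the task becomes relating (i), (ii), (iv) to the vanishing of $b_j$ (given enough information about $a_j$).

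Next I pass to polar coordinates $z_j=r_je^{\sqrt{-1}\theta_j}$. A direct computation yields
\[
(2\pi\sqrt{-1})^{-1}\frac{dz_j}{z_j}=\frac{1}{2\pi}d\theta_j-\frac{\sqrt{-1}}{2\pi}\frac{dr_j}{r_j}.
\]
Since $\frac{dr_j}{r_j}=d(\log r_j)$ is exact, the class $\alpha_j:=\bigl[\frac{1}{2\pi}d\theta_j\bigr]$ is an integral generator of $H^1(T;\Z)$, and one reads off
\[
[\eta_T]=\sum_j c_j\,\alpha_j\in H^1(T;\C),\qquad [\Re(\eta_T)]=\sum_j a_j\,\alpha_j\in H^1(T;\R).
\]
By functoriality of the restriction $H^1(G;\R)\to H^1(T;\R)$, the hypothesis that $[\Re(\eta)]$ is integral forces $[\Re(\eta_T)]$ to be integral, so $a_j\in\Z$ for all $j$. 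With $a_j\in\Z$ fixed, each of the statements (i) ``all $c_j\in\Z$,'' (ii) ``all $c_j\in\R$,'' and (iii) ``$c_j=n_j\in\Z$'' is equivalent to the single condition $b_j=0$ for all $j$; this establishes (i) $\Leftrightarrow$ (ii) $\Leftrightarrow$ (iii).

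For (iv), I restrict $\Phi_{\eta,G}$ to the Lie subgroup $(\R_{>0})^m\subset T\subset G$, on which $\theta_j\equiv 0$. Integrating $\Re(\eta_T)=\sum_j\bigl(\frac{a_j}{2\pi}d\theta_j+\frac{b_j}{2\pi}\frac{dr_j}{r_j}\bigr)$ along the straight path from the identity to $(r_1,\dots,r_m)$ inside $(\R_{>0})^m$ gives
\[
\Phi_{\eta,G}(r_1,\dots,r_m)=\sum_{j=1}^m\frac{b_j}{2\pi}\log r_j \pmod{\Z}.
\]
Since this expression depends continuously (and, in each $\log r_j$, linearly) on the parameters with no periodicity in $\R$, it represents the trivial homomorphism $(\R_{>0})^m\to S^1$ if and only if every $b_j$ vanishes. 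Thus (iv) (in the form ``$\Phi_{\eta,G}$ kills $(\R_{>0})^m$'') is equivalent to (ii); and since the kernel of $\sigma:G\to(S^1)^{2N-m}$ is exactly $(\R_{>0})^m$, contracting this kernel is the same as factoring through $\sigma$.

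The argument is really a bookkeeping exercise, so there is no serious obstacle; the only step that requires a bit of care is ensuring that the chosen generator $\alpha_j=[\frac{1}{2\pi}d\theta_j]$ is genuinely an integral generator of $H^1(T;\Z)$ and that integrality is preserved under restriction from $G$ to $T$, so that the hypothesis on $[\Re(\eta)]$ can be cleanly imported as the constraint $a_j\in\Z$.
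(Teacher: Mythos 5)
Your proposal is correct and follows essentially the same route as the paper: the hypothesis is imported by noting that restriction preserves integrality (giving $a_j\in\Z$), and the equivalences (ii) $\iff$ (iii) $\iff$ (iv) are checked by exactly the polar-coordinate computation $(2\pi\sqrt{-1})^{-1}\,dz_j/z_j=\frac{1}{2\pi}d\theta_j-\frac{\sqrt{-1}}{2\pi}\,dr_j/r_j$ that the paper leaves as a "straightforward computation"; you simply write it out in full. The only nuance is that (iii) allows an arbitrary choice of coordinates, but this is harmless since torus automorphisms act on the coefficient vector through $GL_m(\Z)$ (equivalently, (iii) implies (i) intrinsically), so identifying (iii) with "all $c_j\in\Z$" in the fixed coordinates is justified.
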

\begin{proof}
First, we prove (i)$\iff$ (ii). Since $[\Re(\eta)]\in H^1(G; \R)$ is integral, its restriction to $T$, $[\Re(\eta_T)]\in H^1(T; \R)$ is also integral. Thus, the class $[\eta_T]\in H^1(T; \C)$ is integral if and only if $[\Im(\eta_T)]=0\in H^1(T; \R)$. 

Since $\int\Re(\eta): G\to S^1$ is a real Lie group homomorphism, (iv) holds if and only if $\int\Re(\eta_T): T\to S^1$ contracts the Lie subgroup $(\R_{>0})^m$ of $T$. Both (ii)$\iff$ (iii) and (iii)$\iff$(iv) follow from straightforward computations by using the polar coordinates $(r_i, \theta_i)$. 
\end{proof}


\subsection{Proof of Theorem \ref{finite}}
We first construct a compactification of $X$, such that  the map $\Phi_\eta: X\to S^1$ extends to this compactification. Then we can use the stratified Morse theory of Goresky-McPherson to show that the fibers of $\Phi_\eta$ have finite homotopy type. Notice that $\Phi_\eta: X\to S^1$ is the restriction of $\Phi_{\eta, G}=\int\Re(\eta): G\to S^1$ to $X$. Naturally, we want to construct a compactification of $G$ and let the compactification of $X$ be the closure of $X$ in the compactification of $G$. 

Such compactification of $G$ does not exist in the category of smooth complex algebraic varieties.  In fact, since the fundamental group is a birational invariant for smooth complex projective varieties, by Lemma \ref{compactification} the fundamental group of any smooth compactification of $G$ is isomorphic to $\pi_1(A)$. However, the induced map $(\Phi_\eta)_*: \pi_1(X)\to \pi_1(S^1)$ may not factor through $\pi_1(A)$ in general. Instead, we will construct such a compactification of $G$ as a real algebraic variety. 

In this section, we will frequently consider $G$ and $X$ as real algebraic varieties whose dimensions are twice their complex dimensions. To emphasize the real algebraic variety structures, we will denote them by $G_\R$ and $X_\R$, respectively. We will also use $G_\R$ when we consider $G$ as a real Lie group. 

Since $\R_{>0}^m$ is not a real algebraic group, (\ref{realliegroups}) is not  a short exact sequence of real algebraic groups. Notice that $\C^*$ with the underlying real algebraic group structure fits into a short exact sequence 
$$1\to \R^*\to \C^*\to S^1\to 1$$
of real algebraic groups, where the first arrow is the natural inclusion, and the second map is given by 
$$x+\sqrt{-1}y\mapsto \left(\frac{x^2-y^2}{x^2+y^2}, \frac{2xy}{x^2+y^2}\right).$$
Thus, the composition $(\R^*)^m\to T_\R=(\C^*)^m\to G$ is a homomorphism of real algebraic groups, and we have a short exact sequence of real algebraic groups
\begin{equation}\label{realalgebraicgroups}
1\to (\R^*)^m\to G_\R\xrightarrow[]{\tau} H\to 1, 
\end{equation}
with $H$ a real algebraic group which is isomorphic to $(S^1)^{2N-m}$ as a real Lie group. By taking a fiberwise $(S^1)^m$-compactification of $(\R^*)^m$, we obtain a compactification $\overline{G}'_\R$ of the real algebraic variety $G_\R$. 

Let us now give a more precise construction of $\overline{G}'_\R$. Recall that $G$ is the extension of an abelian variety $A$ by a complex affine torus $T\cong (\C^*)^m$. For the moment, we stay in the category of complex algebraic varieties. As in Lemma \ref{compactification}, we can consider $G$ as a fiber product 
$$G\cong G_1\times_A G_2\times_A \cdots \times_A G_m,$$
where each $G_i$ is a $\C^*$-bundle over $A$. The associated line bundle of $G_i$ was denoted by $L_i$. Let $\overline{G}_i$ be the fiberwise projective compactification of $L_i$, i.e., the associated projective bundle of $L_i+\C_A$. Clearly, the $\cp^1$-bundle $\overline{G}_i
\to A$ has two distinguished sections denoted by $0_A$ and $\infty_A$ corresponding to the fiberwise $0\in \cp^1$ and $\infty\in \cp^1$, respectively. 

Now, we move to the category of real algebraic varieties. Denote the associated real algebraic variety of $\overline{G}_i$ by $\overline{G}_{i\R}$. The sections $0_A, \infty_A\subset \overline{G}_{i\R}$ are real subvarieties of codimension 2. Define $\overline{G}_{i\R}'$ to be the (real) blowup of $\overline{G}_{i\R}$ along both $0_A$ and $\infty_A$. Then, as a real manifold, $\overline{G}_{i\R}'$ is a fiber bundle over $A_\R$, whose fibers are Klein bottles. The short exact sequence of complex algebraic groups
$$1\to \C^*\to G_i\to A\to 1$$
induces a short exact sequence of real algebraic groups
$$1\to \R^*\to G_{i\R}\to H_i\to 1,$$
where $H_i$ is a real algebraic group which is isomorphic to $(S^1)^{2N-2m+1}$ as a real Lie group. Clearly, $\overline{G}'_{i\R}$ is a compactification of $G_{i\R}$ as a real algebraic variety, which fiberwise is a $S^1$-compactification of $\R^*$. Now, define
$$\overline{G}'_\R:=\overline{G}'_{1\R}\times_{A_\R}\cdots\times_{A_\R}\overline{G}'_{m\R}.$$
Then $\overline{G}'_\R$ is a compactification of $G_\R$ as a real algebraic variety, which fiberwise is a $(S^1)^m$-compactification of $(\R^*)^m$. 



In general, the map $\Phi_{\eta,G}$ may not extend to a real analytic map $\overline{G}'_\R\to S^1$, because the homomorphism $\tau: G_\R\to H$ in (\ref{realalgebraicgroups}) does not induce an isomorphism on the fundamental groups. In fact, the induced map $\tau_*: \pi_1(G_\R)\to \pi_1(H)$ is injective, and its cokernel is a 2-torsion group. 

\begin{lemma}
Under the assumptions of Theorem \ref{finite}, the map $2\Phi_{\eta, G}:=\int2\Re(\eta): G\to S^1$ extends to a real analytic map $\overline{2\Phi_{\eta, G}}: \overline{G}'_\R\to S^1$. 
\end{lemma}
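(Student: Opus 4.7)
The plan is to exploit the hypothesis of Theorem \ref{finite} in the form provided by Lemma \ref{equivalent}, and show that $2\Phi_{\eta,G}$ descends through the real algebraic quotient $\tau: G_\R \to H$, which by construction extends to $\overline{G}'_\R$.

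First, I would invoke the equivalence (iii) of Lemma \ref{equivalent} to choose coordinates $(z_1,\ldots,z_m)$ on $T \cong (\C^*)^m$ in which
$$\eta_T = (2\pi\sqrt{-1})^{-1}\sum_{i=1}^m n_i\, \frac{dz_i}{z_i}, \qquad n_i \in \Z.$$
A direct computation (write $z_i = r_i e^{\sqrt{-1}\theta_i}$ and take real parts) shows that, on any $T$-fiber of $G \to A$, the map $\Phi_{\eta,G}$ has the form $(z_1,\ldots,z_m) \mapsto \prod_i (z_i/|z_i|)^{n_i}$. Doubling yields
$$2\Phi_{\eta,G}\big|_{T\text{-fiber}} : (z_1,\ldots,z_m) \longmapsto \prod_{i=1}^m \left(\frac{z_i^2}{|z_i|^2}\right)^{n_i},$$
which is a product of the factor-wise real algebraic characters $\tau_i: \C^* \to S^1$, $z \mapsto z^2/|z|^2$, and which is trivial on each $\R^* \subset \C^*$. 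Hence the Lie group homomorphism $2\Phi_{\eta,G}: G_\R \to S^1$ vanishes on $\ker(\tau) = (\R^*)^m$, and descends to a continuous homomorphism $\psi: H \to S^1$. Since $H$ is a compact real torus, $\psi$ is automatically a real algebraic character.

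Second, I would verify that $\tau$ extends to a real analytic map $\overline{\tau}: \overline{G}'_\R \to H$. This is a fiberwise statement over the compact base $A_\R$: on each factor $\overline{G}'_{i\R}$, the map $\tau_i$ restricts on a $\C^*$-fiber to $z \mapsto z^2/|z|^2$, which in polar coordinates $(r,\theta)$ is simply $(r,\theta) \mapsto e^{2\sqrt{-1}\theta}$. Since this formula depends only on $\theta$ and not on $r$, it extends real-analytically across the $S^1$-boundary circles introduced by the real blowups of $\overline{G}_{i\R}$ along $0_A$ and $\infty_A$ (which are exactly the angular directions at those loci). The fiber product and the triviality of $\tau$ on the $A$-part assemble these into a global real analytic extension. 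Composing, $\overline{2\Phi_{\eta,G}} := \psi \circ \overline{\tau}$ is the desired extension.

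The main subtlety is the second step, namely seeing why the doubling is necessary: the map $\Phi_{\eta,G}$ itself, which factors only through the smooth quotient $\sigma: G_\R \to (S^1)^{2N-m}$ with kernel $(\R_{>0})^m$, does not extend over the real blowup at $r = 0$ or $r = \infty$ because the fiberwise model $z \mapsto z/|z|$ distinguishes the two sign-components of $\R^*$ (so $r=0$ is approached by different $S^1$-limits from $\R_{>0}$-orbits and $\R_{<0}$-orbits). Doubling the angular coordinate identifies these two components, which is exactly the passage from $\R_{>0}$ to $\R^*$, and this is what makes the descended map algebraic in the sense of the compactification $\overline{G}'_\R$.
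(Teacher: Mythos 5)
Your argument is correct and, at its core, follows the same route as the paper: both proofs show that the doubled map factors through the real algebraic quotient $\tau\colon G_\R\to H$ of (\ref{realalgebraicgroups}) and then extend it using the fiberwise nature of the compactification $\overline{G}'_\R$. The difference is only in execution: the paper obtains the factorization abstractly from Lemma \ref{equivalent}(iv) together with the observation that $2\sigma$ factors through $\tau$ (the cokernel of $\tau_*$ on fundamental groups being $2$-torsion), whereas you compute directly in the coordinates of Lemma \ref{equivalent}(iii); note that your computation can be compressed to the coordinate-free remark that any $g\in(\R^*)^m$ has $g^2\in(\R_{>0})^m$, so $2\Phi_{\eta,G}(g)=\Phi_{\eta,G}(g^2)=0$ by Lemma \ref{equivalent}(iv). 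You are also more explicit than the paper in extending $\tau$ itself across the exceptional locus, which is a welcome addition. One caveat on your step 2: the blowup in the paper is the projective real blowup, so the exceptional set over $0_A$ and $\infty_A$ is an $\mathbb{RP}^1$-bundle of \emph{line} directions (whence the Klein bottle fibers, and fiberwise an $S^1$-compactification of $\R^*$), not a boundary circle of rays. Consequently, the relevant property of the fiberwise model $z\mapsto z^2/|z|^2=e^{2\sqrt{-1}\theta}$ is not just independence of $r$ (which would equally hold for $z\mapsto z/|z|$ and would make the doubling pointless), but its invariance under $\theta\mapsto\theta+\pi$, which is exactly what lets it descend to line directions and is where the factor $2$ enters. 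Your closing paragraph shows you understand this, but as written step 2 attributes the extension to $r$-independence alone; this is a presentational slip rather than a gap, since the formula you use does have the required $\pi$-periodicity and the rest of the argument is unaffected.
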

\begin{proof}
By Lemma \ref{equivalent}, $\Phi_{\eta, G}=\int\Re(\eta): G\to S^1$ contracts the fibers $(\R_{>0})^m$ and factors through the map $\sigma: G=G_\R\to (S^1)^{2N-m}$ in (\ref{realliegroups}). By definition, the map $\tau: G_\R\to H$ in (\ref{realalgebraicgroups}) factors through $\sigma: G=G_\R\to (S^1)^{2N-m}$, and $2\sigma$ factors through $\tau$. Thus, the map $2\Phi_{\eta, G}$ factors through $\tau$. Note that $\tau$ is a real algebraic map (although $\sigma$ is not).  Since with respect to (\ref{realalgebraicgroups}) the real algebraic variety $\overline{G}'_\R$ is a fiberwise $(S^1)^m$-compactification of $(\R^*)^m$, the map $2\Phi_{\eta, G}:=\int2\Re(\eta): G\to S^1$ extends to a real analytic map $\overline{2\Phi_{\eta, G}}: \overline{G}'_\R\to S^1$. 
\end{proof}

Now, since $\overline{G}'_\R$ is a compactification of $G_\R$ as a real algebraic variety, the closure $\overline{X}_\R$ of $X_\R$ in $\overline{G}'_\R$ is a compactification of $X_\R$ as a real algebraic variety. Hence the complement $\overline{X}_\R\setminus X_\R$ is a real Zariski closed set in $\overline{X}_\R$. Since $\overline{2\Phi_{\eta,G}}: \overline{G}'_\R\to S^1$ is a real analytic map, for any $b\in S^1$, the corresponding fiber $\overline{2\Phi_{\eta,G}}^{-1}(b)$ is a closed real analytic subvariety of $\overline{G}'_\R$. Thus, for any $a\in S^1$, $\Phi_\eta^{-1}(a)$ consists of some connected components of 
$$X\cap \overline{2\Phi_{\eta,G}}^{-1}(2a)=\left(\overline{X}_\R\cap \overline{2\Phi_{\eta,G}}^{-1}(2a)\right)\setminus \left(\overline{G}'_\R\setminus G_\R\right),$$
which is a real analytic constructible set of a compact real analytic manifold. By the stratified Morse theory of Goresky-MacPherson (see \cite[Theorem 2.2.1 and Section 10.8]{GM88}), the real analytic constructible set $X\cap \overline{2\Phi_{\eta,G}}^{-1}(2a)$ is homotopy equivalent to a finite CW-complex. Therefore, as it consists of some of the connected components of the real analytic constructible $X\cap \overline{2\Phi_{\eta,G}}^{-1}(2a)$, the subset $\Phi_\eta^{-1}(a)$ is also homotopy equivalent to a finite CW-complex. This completes the proof of Theorem \ref{finite}. $\hfill\square$


\subsection{Proof of Proposition \ref{representative}}

We first make the following simple observation. 
\begin{lemma}
The short exact sequence (\ref{SES}) as a complex of complex Lie groups induces a canonical splitting of $G_\R$ as a direct sum of two real Lie groups
\begin{equation}\label{splitting}
G_\R\cong \R^m\oplus (S^1)^{2N-m}.
\end{equation}
\end{lemma}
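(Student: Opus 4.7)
The plan is to exhibit each summand as a canonical closed subgroup of $G_\R$ and then to check that together they give a direct sum decomposition of the correct real dimension.

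First, from the canonical polar decomposition $\C^* \xrightarrow{\sim} \R_{>0}\times S^1$, $z\mapsto (|z|, z/|z|)$, followed by the logarithm on the first factor, I obtain a canonical isomorphism of real Lie groups $T_\R \cong \R^m\oplus (S^1)^m$. Let $V\subseteq T_\R\subseteq G_\R$ denote the resulting canonical closed vector subgroup $\R^m$ corresponding to $(\R_{>0})^m$, and let $K=K(G_\R)$ denote the maximal compact subgroup of $G_\R$, which is canonically defined (for example, as the closure of the torsion subgroup of $G_\R$, or equivalently as the unique maximal compact subgroup of the connected abelian Lie group $G_\R$).

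The key step will be to show $G_\R=V\oplus K$. The intersection $V\cap K$ is a compact subgroup of a vector group, hence trivial. For the sum, I would argue that the restriction of the projection $G_\R\to A_\R$ to $K$ is surjective: its image is a closed connected subtorus $C\subseteq A_\R$, the quotient $A_\R/C$ is a compact abelian Lie group receiving a surjection from the vector group $G_\R/K$, and since a vector group has no non-trivial compact quotient, $A_\R/C=0$, i.e.\ $C=A_\R$. Hence for any $g\in G_\R$ one may choose $k\in K$ with the same image in $A_\R$, so that $g-k\in T_\R$; decomposing $g-k$ via $T_\R\cong V\oplus (S^1)^m$ and noting that $(S^1)^m\subseteq K$ shows $g\in V+K$.

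Finally, $K$ is a compact connected abelian Lie group of real dimension $2N-m$, hence isomorphic to $(S^1)^{2N-m}$, yielding the canonical splitting $G_\R\cong \R^m\oplus (S^1)^{2N-m}$. The only non-formal step is the surjectivity of $K\to A_\R$, which in turn reduces to the compactness of $A_\R$ and the absence of compact quotients of vector groups; everything else follows from the canonical polar decomposition of $\C^*$ and the standard structure theorem for connected abelian Lie groups.
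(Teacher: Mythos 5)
Your strategy -- exhibiting the splitting as an internal direct sum $G_\R = V \oplus K$, with $V \subset T_\R$ the canonical ``polar'' vector subgroup and $K$ the maximal compact subgroup -- differs from the paper's proof, which writes $G_\R \cong \R^{2N}/\Lambda$ with $\Lambda = \pi_1(G)$ of rank $2N-m$, takes the compact factor to be the subtorus $\Lambda_\R/\Lambda$ spanned by the lattice, and checks that $\sigma$ restricts to an isomorphism on it. Your route can be made to work, but the step you yourself single out as the only non-formal one rests on a false statement: a vector group \emph{does} have non-trivial compact quotients, e.g.\ $\R \to \R/\Z \cong S^1$; more generally a closed subgroup of $\R^a$ has the form $\R^c \times \Z^d$ and can perfectly well be cocompact. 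So ``since a vector group has no non-trivial compact quotient, $A_\R/C = 0$'' does not follow as stated, and since this is precisely where the surjectivity of $K \to A_\R$ (the heart of the sum decomposition) is established, it is a genuine gap.

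The gap is reparable inside your setup. The preimage of $C$ in $G_\R$ is $T_\R + K$, so the kernel of the induced surjection $G_\R/K \to A_\R/C$ is the image of $T_\R$ in $G_\R/K$; this is connected (continuous image of the connected group $T_\R$) and closed (because $T_\R + K$ is the sum of a closed subgroup and a compact one, and is saturated under $K$). A closed connected subgroup of the vector group $G_\R/K$ is a linear subspace, so $A_\R/C$ is again a vector group, and being compact it is trivial; hence $C = A_\R$. Alternatively, argue as the paper does: $\pi_1(G_\R) = \Lambda$ surjects onto $\pi_1(A_\R)$ (homotopy exact sequence, $T$ connected), and $\pi_1(A_\R)$ spans $\mathrm{Lie}(A_\R)$ because $A$ is compact, so the subtorus $K = \Lambda_\R/\Lambda$ already maps onto $A_\R$; this also yields $\dim K = 2N - m$ directly. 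With either repair, the remaining steps of your argument -- triviality of $V \cap K$, the inclusion $(S^1)^m \subseteq K$, the resulting decomposition $g \in V + K$, and the identification $K \cong (S^1)^{2N-m}$ by the dimension count -- are correct, and both subgroups are canonical, so the splitting is as claimed.
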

\begin{proof}
As a real Lie group, $G_\R\cong \R^{2N}/\Lambda$, where $\Lambda$ is a discrete subgroup of $\R^{2N}$ of rank $2N-m$. Let $\Lambda_\R$ be the subspace of $\R^{2N}$ generated by $\Lambda$. Then $\Lambda_\R/\Lambda\cong (S^1)^{2N-m}$ is a Lie subgroup of $G_\R$. Obviously, the restriction of $\sigma: G_\R\to (S^1)^{2N-m}$ to $\Lambda_\R/\Lambda$ is an isomorphism. Thus, the inclusion map $\Lambda_\R/\Lambda\to G$ gives rise to a splitting of the short exact sequence (\ref{realliegroups}). Notice that $\R$ is isomorphic to $\R_{>0}$ as real Lie groups via the exponential map. Thus, the splitting (\ref{splitting}) is canonical.
\end{proof}

\begin{proof}[Proof of Proposition \ref{representative}]
Let $H$ be the space of left invariant real 1-forms on $G_\R$. The splitting (\ref{splitting}) induces complementary subspaces $H_{\R^m}$ and $H_{(S^1)^{2N-m}}$ of $H$. 

Since every left invariant 1-form on $G_\R$ is closed, we have a linear map 
\begin{equation}\label{class}
\zeta\mapsto [\zeta]: H\to H^1(G; \R).
\end{equation}
By (\ref{splitting}), the kernel of (\ref{class}) is equal to $H_{\R^m}$, and the restriction of (\ref{class}) to $H_{(S^1)^{2N-m}}$ is an isomorphism. 

Notice that the complex structure of $G$ gives rise to an action on $H$, which we denote by $J$. Then the map 
$$\xi\mapsto \xi+J\xi: H\to \Gamma$$
is an inverse of the map $\Re: \Gamma\to H$. Furthermore, it follows from a polar coordinate calculation as in the proof of Lemma \ref{equivalent} that for $\eta\in \Gamma$, $[\eta_T]$ is real if and only if $\Re(\eta)\in H_{(S^1)^{2N-m}}$. Therefore, the restriction of the maps 
$$ \Gamma\xrightarrow[]{\Re} H\xrightarrow[]{[-]}H^1(G; \R)$$
induces two isomorphisms of real vector spaces 
$$\{\eta\in \Gamma| \ [\eta_T]\in H^1(T, \C) \text{ is real}\}\to H_{(S^1)^{2N-m}}\to H^1(G; \R),$$ thus proving the claim.
\end{proof}


\section{Palais-Smale conditions}\label{PS}
In this section we prove Theorem \ref{main} and Theorem \ref{veryaffine}.

First, we construct the Zariski open subset $U$ in Theorem \ref{main}. 
Recall that $\Gamma$ is the space of left invariant holomorphic $1$-forms on $G$, $\dim G=N$ and $\dim X=n$. Define the degenerating locus\footnote{The definition of the degenerating locus $Z$ is motivated by the likelihood correspondence in algebraic statistics (see \cite{HS}). } in $X\times \Gamma$ by
$$Z:=\{(x, \eta)\in X\times \Gamma| \ \eta \text{ vanishes at } x\}.$$
Since the degenerating condition is given by algebraic equations, the locus $Z$ is an algebraic set of $X\times \Gamma$. Since $X$ is smooth, the first projection $p_1: Z\to X$ is a vector bundle with fiber dimension $N-n$. Thus, $\dim Z=N$, and the second projection $p_2: Z\to \Gamma$ is generically finite (e.g., by using again Verdier's generic fibration theorem). Let $U\subset \Gamma$ be the Zariski open set where $p_2: Z\to \Gamma$ is finite and \'etale, or, equivalently, a finite covering map. Since being finite and \'etale is an open condition, $U\subset \Gamma$ is a non-empty Zariski open subset. 

One can easily check that the holomorphic $1$-form $\eta_X$ degenerates at a point $x$ if and only if $Z$ intersects $X\times \{\eta\}$ at $(x, \eta)$. Moreover, the $1$-form $\eta_X$ has a regular singularity at $x$ if and only if $Z$ intersects $X\times \{\eta\}$ transversally at $(x, \eta)$. Since the projection $p_2: Z\to \Gamma$ is \'etale over $U$, and since $\eta \in U$, it follows that $Z$ intersects $X\times \{\eta\}$ transversally. Thus, the $1$-form $\eta_X$ has a regular singularity at every degeneration point. By the holomorphic Morse lemma, the real part $\Re(\eta_X)$ also has a regular singularity at every degeneration point. Thus, the function $\Phi_\eta=\int\Re(\eta_X)$ is a circle-valued Morse function. 

The following lemma follows immediately from the fact that $p_2$ is finite covering map over $U$.
\begin{lemma}\label{etale}
For any $\eta\in U$, there exists a small open neighborhood $U_\eta$ (in the sense of analytic topology) of $\eta$ in $U$, such that $\overline{U}_\eta\subset U$ and $p_2^{-1}(\overline{U}_\eta)\subset X\times \overline{U}_\eta$.  Here $\overline{U}_\eta$ denotes the closure of $U_\eta$ in $\Gamma$. In particular, $p_2^{-1}(\overline{U}_\eta)\cap (X\times\overline{U}_\eta)=p_2^{-1}(\overline{U}_\eta)$ is compact. 
\end{lemma}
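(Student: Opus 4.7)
The plan is to deduce the lemma from the defining property of $U$, namely that $p_2: p_2^{-1}(U) \to U$ is a finite covering map (equivalently, finite and \'etale). Finite covering maps are proper, so preimages of compact sets will automatically be compact. All three assertions in the statement will follow once such a neighborhood $U_\eta$ is chosen small enough.

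First, I would exploit that $U$ is a non-empty Zariski open subset of the complex affine space $\Gamma$, and hence also open in the analytic topology. I then pick any relatively compact open neighborhood $U_\eta$ of $\eta$ whose analytic closure $\overline{U}_\eta$ still lies inside $U$; concretely, one may take $U_\eta$ to be a small open analytic ball around $\eta$ of radius small enough that the closed ball $\overline{U}_\eta$ remains in $U$. This immediately gives the containment $\overline{U}_\eta \subset U$. Next, since the restriction $p_2: p_2^{-1}(U) \to U$ is a finite covering map, it is proper, and therefore $p_2^{-1}(\overline{U}_\eta)$ is compact because $\overline{U}_\eta$ is a compact subset of $U$. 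The inclusion $p_2^{-1}(\overline{U}_\eta) \subset X \times \overline{U}_\eta$ is tautological from the definition of $p_2$ as the second projection from $Z \subset X \times \Gamma$, so the identity $p_2^{-1}(\overline{U}_\eta) \cap (X \times \overline{U}_\eta) = p_2^{-1}(\overline{U}_\eta)$ follows at once.

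I do not anticipate a genuine obstacle: the lemma is a bookkeeping statement which repackages the finite covering property of $p_2$ over $U$ for later use. The only point that needs any care is the choice of $U_\eta$ with compact analytic closure in $U$, which is possible because $U$ is analytically open in a finite-dimensional complex vector space. The compactness of $p_2^{-1}(\overline{U}_\eta)$ will presumably be invoked in the subsequent Palais--Smale arguments to prevent the degeneration points of a nearby form $\eta' \in \overline{U}_\eta$ from escaping to infinity in the non-compact manifold $X$.
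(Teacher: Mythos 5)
Your proof is correct and matches the paper's intent: the paper itself derives this lemma "immediately from the fact that $p_2$ is a finite covering map over $U$", which is exactly your argument of choosing a closed analytic ball $\overline{U}_\eta\subset U$ (possible since the Zariski open set $U$ is analytically open in the affine space $\Gamma$) and invoking properness of the finite covering $p_2^{-1}(U)\to U$ to get compactness of $p_2^{-1}(\overline{U}_\eta)$, the remaining containment being tautological.
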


Let $\sg_\R$ be the real Lie algebra of the real Lie group $G_\R$. Denote the real dual vector space of $\sg_\R$ by $\sg_\R^\vee$. Let $h_e$ be a Hermitian metric on the vector space $\mathfrak{g}^\vee_\C=\sg^\vee_\R\otimes_\R\C$, and let $h$ be the left invariant Hermitian metric on $G$ determined by $h_e$. Denote the restriction of $h$ to $X$ by $h_X$, and denote the associated Riemannian metric of $h_X$ by $g_X$. Since $\Phi_\eta: X\to S^1$ is defined as $\int \Re(\eta_X)$, we have $d\Phi_\eta=\Re(\eta_X)$. We claim the following:
\begin{equation}\label{equalities}
\begin{split}
||\eta_X||^2_{h_X}&=||\Re (\eta_X)||^2_{g_X}+||\Im (\eta_X)||^2_{g_X}\\
&=2||\Re (\eta_X)||^2_{g_X}\\
&=2||\nabla \Phi_\eta||^2_{g_X}.
\end{split}
\end{equation}
In fact, the first equality holds since $\eta_X$ is a holomorphic $1$-form. The third equality follows from the definition of $\Phi_\eta$. To check the second equality at a point $x\in X$, let $z_i=x_i+\sqrt{-1}y_i$ be a set of unitary coordinates of $X$ at $x$. Since $\eta_X$ is a holomorphic $1$-form, by the Cauchy-Riemann equation we have:
$$\langle \Re(\eta_X), \partial/\partial_{x_i}\rangle=\langle\Im(\eta_X), \partial/\partial_{y_i}\rangle$$
and
$$\langle\Re(\eta_X), \partial/\partial_{y_i}\rangle=-\langle\Im(\eta_X), \partial/\partial_{x_i}\rangle.$$
Thus, at the point $x\in X$,
\begin{equation*}
\begin{split}
||\Im (\eta_X)||^2_{g_X}&=\sum_{i}\left(\langle\Im(\eta_X), \partial/\partial_{x_i}\rangle^2+\langle\Im(\eta_X), \partial/\partial_{y_i}\rangle^2\right)\\
&=\sum_{i}\left(\langle\Re(\eta_X), \partial/\partial_{y_i}\rangle^2+\langle\Re(\eta_X), \partial/\partial_{x_i}\rangle^2\right)\\
&=||\Re (\eta_X)||^2_{g_X}.
\end{split}
\end{equation*}
Since $x$ is an arbitrary point in $X$, equality (\ref{equalities}) holds.

 \begin{proof}[Proof of Theorem \ref{main}]
Suppose $S$ is a subset of $X$ on which $||\nabla \Phi_\eta||^2_{g_X}$ is not bounded away from zero, and assume that the closure of $S$ does not contain any degeneration point of $\eta_X$. Since $||\nabla \Phi_\eta||^2_{g_X}=\frac{1}{2}||\eta_X||^2_{h_X}$, it follows that there exists a sequence of points $(x_i)_{i=1}^{\infty}$ in $S$ such that $\lim_{i\to \infty}||(\eta_X)_{x_i}||_{h_X}=0$ and $(x_i)_{i=1}^\infty$ does not have any limit point in $X$. 

To finish the proof, we need the following lemma.
\begin{lemma}\label{42}
Suppose $||(\eta_X)_{x}||_{h_X}=\epsilon$ for some point $x\in X$. Then there exists a unique $\theta \in \Gamma$ such that $||\theta||_{h_\Gamma}=\epsilon$ and $\eta_X-\theta_X$ degenerates at $x$. Here, the norm $||\cdot||_{h_\Gamma}$ on $\Gamma$ is defined to be the restriction of the norm $||\cdot||_{h_e}$ on $\sg_\C^\vee$ via the natural isomorphisms $\Gamma\cong\sg^\vee\cong \mathfrak{g}_\C^{\vee(1,0)}$, where $\sg$ is the Lie algebra of the complex Lie group $G$, and $\sg^\vee$ is the dual of $\sg$ as a complex vector space. 
\end{lemma}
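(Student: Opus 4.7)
The plan is to reduce the lemma to a clean linear-algebra statement about a surjective linear map between two Hermitian vector spaces, and then invoke the usual minimum-norm / Pythagorean argument on an affine subspace of the source.

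First, I would use the left-invariant structure to set up the geometry. Left translation gives, for every $x \in G$, a canonical complex-linear isometry $\Gamma \cong T_x^* G$ (carrying $\theta$ to $(\theta_G)_x$), compatible with $h_\Gamma$ and with the Hermitian metric $h_x$ on $T_x^* G$ dual to $h$. Restricting covectors from $T_x G$ to $T_x X$ gives a surjective complex-linear map $r_x \colon T_x^* G \to T_x^* X$, and composing with the isometry above yields a surjection
\begin{equation*}
\rho_x \colon \Gamma \twoheadrightarrow T_x^* X, \qquad \rho_x(\eta) = (\eta_X)_x.
\end{equation*}
Because $h_X$ on $T_x X$ is by definition the restriction of $h$ on $T_x G$, its dual metric on $T_x^* X$ agrees with the quotient metric on $T_x^* G / \ker r_x$; equivalently, $\rho_x$ restricted to $(\ker \rho_x)^\perp \subset \Gamma$ is an isometry onto $(T_x^* X, h_X)$. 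This is the only nontrivial compatibility to record, and it is tautological from the definitions.

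Second, I would translate the lemma into the language of $\rho_x$. The condition that $\eta_X - \theta_X$ degenerates at $x$ is precisely $\rho_x(\theta) = \rho_x(\eta) = (\eta_X)_x$, so we are looking for $\theta$ in the affine subspace $A := \rho_x^{-1}\bigl((\eta_X)_x\bigr)$ with $\|\theta\|_{h_\Gamma} = \epsilon$. Since $A$ is a translate of the linear subspace $\ker \rho_x$, it has a unique element $\theta$ of minimum norm, namely the orthogonal projection of any lift onto $(\ker \rho_x)^\perp$. By the isometry recorded above,
\begin{equation*}
\|\theta\|_{h_\Gamma} = \|\rho_x(\theta)\|_{h_X} = \|(\eta_X)_x\|_{h_X} = \epsilon,
\end{equation*}
which gives existence.

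For uniqueness, any other $\theta' \in A$ differs from $\theta$ by a nonzero element $\kappa \in \ker \rho_x$, and since $\theta \in (\ker \rho_x)^\perp$, the Pythagorean theorem gives
\begin{equation*}
\|\theta'\|_{h_\Gamma}^2 = \|\theta\|_{h_\Gamma}^2 + \|\kappa\|_{h_\Gamma}^2 > \epsilon^2.
\end{equation*}
So $\theta$ is the only element of $A$ of norm $\epsilon$. I do not anticipate a real obstacle: the only point requiring care is the identification of the dual metric on $T_x^* X$ with the quotient metric on $T_x^* G / \ker r_x$, and this follows directly from the fact that $h_X$ is defined as the restriction of $h$.
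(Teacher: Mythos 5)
Your argument is correct and is essentially the paper's own proof in coordinate-free form: the paper's unitary coordinate change at $x$ (making $X=\{z_{n+1}=\cdots=z_N=0\}$ and $(\eta_X)_x=\epsilon\, dz_1$) realizes exactly your orthogonal splitting of $\Gamma\cong T_x^{*}G$ into $\ker\rho_x$, spanned by $dz_{n+1},\dots,dz_N$, and its orthogonal complement, and the step where the condition $||\theta||_{h_\Gamma}=\epsilon$ forces $\theta_j=0$ for $j>n$ is precisely your Pythagorean minimum-norm/uniqueness argument. No gap.
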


\begin{proof}
The statement is local at $x$ and only involves the first order differential. Without loss of generality, we can assume that $G=\C^N$ with coordinates $(z_j)_{1\leq j\leq N}$ and the standard Hermitian metric $h$, $X=\{z_{n+1}=\cdots=z_N=0\}$, and $x=(0, \ldots, 0)$. Since $||(\eta_X)_{x}||_{h_X}=\epsilon$, by a unitary coordinate change within $(z_j)_{1\leq j\leq n}$, we can assume that 
$$\eta_X=\eta_1dz_1+\cdots+\eta_ndz_n,$$
where $\eta_j$ are local holomorphic functions on $X$ near $x$ with $\eta_1(x)=\epsilon$, and $\eta_j(x)=0$ for $2\leq j\leq n$. Since $\theta$ is an left invariant holomorphic $1$-form on $G$, it is of the form 
$$\theta=\theta_1 dz_1+\cdots+\theta_N dz_N,$$
where $\theta_j\in \C$ are constant. The condition that $\eta_X-\theta_X$ degenerates at $x$ is equivalent to 
$\theta_1=\epsilon$ and $\theta_j=0$ for $2\leq j\leq n$. Now, the condition $||\theta||_{h_\Gamma}=\epsilon$ implies that $\theta_j=0$ for all $j>n$. Thus, $\theta=\epsilon dz_1$ is the unique left invariant holomorphic $1$-form satisfying the conditions in the lemma. 
\end{proof}

By Lemma \ref{42}, for each $i$ we can find $\theta_i\in \Gamma$ such that $||\theta_i||_{h_\Gamma}=||(\eta_X)_{x_i}||_{h_X}$ and $\eta_X-(\theta_i)_X$ degenerates at $x_i$. Thus, we have a sequence of points $(x_i, \eta-\theta_i)$ in $Z$. Since $\lim_{i\to \infty}||(\eta_X)_{x_i}||_{h_X}=0$, we have $\lim_{i\to \infty}||\theta_i||_{h_\Gamma}=0$, or equivalently, $\eta-\theta_i$ converges to $\eta$ in $\Gamma$. Since $x_i$ does not have a limit point in $X$, the sequence $(x_i, \eta-\theta_i)$ does not have a limit point in $X\times \Gamma$. However, by Lemma \ref{etale}, there exists an open neighborhood $U_\eta$ of $\eta$ in $\Gamma$ such that $p_2^{-1}(\overline{U}_\eta)\cap (X\times \overline{U}_\eta)$ is compact. Since $\eta-\theta_i$ converges to $\eta$, and since $\eta\in U$, when $i$ is sufficiently large we have $\eta-\theta_i\in U$. Hence, by Lemma \ref{etale}, when $i$ is sufficiently large, $(x_i, \eta-\theta_i)$ is contained in the compact subset $p_2^{-1}(\overline{U}_\eta)\cap (X\times \overline{U}_\eta)$ of $Z$. This is a contradiction to the fact that $x_i$ does not have a limit point in $X$. 

Thus, we have concluded the proof of Theorem \ref{main}. 
\end{proof}

The proof of Theorem \ref{veryaffine} is very similar to that of Theorem \ref{main} and it will be sketched below. 
\begin{proof}[Proof of Theorem \ref{veryaffine}]
First, we should replace $\C^*$ by $\C/\Z$, and write $\Psi_\eta=\int\eta_X: X\to \C/\Z$ in the theorem to be more precise. The map $\int\eta_X: X\to \C/\Z$ splits into the real part $\int\Re(\eta_X): X\to S^1=\R/\Z$ and the imaginary part $\int\Im(\eta_X): X\to \R$. Composing with the isomorphism $\C/\Z\cong \C^*$ defined by $z\mapsto e^{2\pi\sqrt{-1}z}$, the map $|\cdot|: \C^*\to \R_{>0}$ becomes $z\mapsto e^{2\pi\Im(z)}: \C/\Z\to \R_{>0}$. Hence, 
$$\Psi_\eta(x)=\log \Big|\int_{e}^x\eta_X\Big|=\log\left(e^{2\pi\Im\int^x_{e}\eta_X}\right)=2\pi\int^x_{e}\Im(\eta_X)$$
where $\int_{e}^x\eta_X\in\C^*$ and $\int^x_{e}\Im(\eta_X)\in\R$. In other words, $\Psi_\eta=2\pi\int\Im(\eta_X)$. Since $\eta_X$ is a holomorphic $1$-form, by equation (\ref{equalities}) we get:
$$||\nabla\Psi_\eta||=2\pi||\Im(\eta_X)||=\sqrt{2}\pi||\eta_X||.$$
Suppose $S$ is a subset of $X$ on which $||\nabla\Psi_\eta||^2_{g_X}$ is not bounded away from zero, and suppose that the closure of $S$ does not contain any degeneration point of $\eta_X$. Then we can find a sequence $(x_i)_{i=1}^\infty$ in $S$ such that $\lim_{i\to \infty}||(\eta_X)_{x_i}||_{h_X}=0$ and $(x_i)_{i=1}^\infty$ does not have any limit point in $X$. Now, we are in the exact same situation as in the proof of Theorem \ref{main}, hence we can use the same argument to conclude the proof. 
\end{proof}

\begin{remark}\rm
In the proof of Theorem \ref{veryaffine}, we did not require $||\Psi_\eta||$ to be bounded on $S$. In other words, the function $\Psi_\eta: X\to \R$ satisfies the following stronger condition: {\it if $S$ is a subset of $X$ on which $||\nabla\Psi_\eta||$ is not bounded away from zero, then there exists a critical point of $\Psi_\eta$ in the closure of $S$.} 
\end{remark}


\end{document}